\documentclass[a4paper,oneside, reqno, 10pt]{amsart}
\usepackage{mathrsfs}
\usepackage{amsfonts}
\usepackage{amssymb}
\usepackage{amsxtra}
\usepackage{dsfont}
\usepackage{amsthm}

\usepackage[english,polish]{babel}

\newcommand{\pl}[1]{\foreignlanguage{polish}{#1}}

\usepackage{color}

\usepackage{hyperref} 

\usepackage{enumitem}
\usepackage{stmaryrd} 

\theoremstyle{plain}
\newtheorem{theorem}{Theorem}[section]
\newtheorem*{sq}{Stein's question}
\newtheorem{proposition}{Proposition}[section]

\newtheorem{lemma}[proposition]{Lemma}
\theoremstyle{definition}

\newtheorem{remark}{Remark}[section]
\newtheorem*{remark*}{Remark}

\numberwithin{equation}{section}

\newcommand{\RR}{\mathbb{R}}
\newcommand{\ZZ}{\mathbb{Z}}
\newcommand{\TT}{\mathbb{T}}
\newcommand{\CC}{\mathbb{C}}
\newcommand{\NN}{\mathbb{N}}
\newcommand{\QQ}{\mathbb{Q}}

\newcommand{\ind}[1]{{\mathds{1}_{{#1}}}}
\newcommand{\dist}{\operatorname{dist}}

\newcommand{\sprod}[2] {{#1 \cdot #2}}

\newcommand{\floor}[1]{{\lfloor {#1} \rfloor}}
\newcommand{\vfloor}[1]{{\llbracket {#1} \rrbracket}}

\newcommand{\dif}{\mathrm{d}}
\newcommand{\vo}{{\bf 1}}

\newcommand{\la}{\lambda}

\DeclareMathOperator{\supp}{supp}

\setlength{\textwidth}{16 cm}
\setlength{\evensidemargin}{0cm}
\setlength{\oddsidemargin}{0cm}
\setlength{\topmargin}{0cm}
\setlength{\textheight}{24 cm}

\title[Dimension-free estimates for the discrete spherical maximal functions]
{Dimension-free estimates for the discrete \\ spherical maximal functions}

\author{Mariusz Mirek}

\address{Mariusz Mirek \\
  Department of Mathematics\\
  Rutgers University\\
Piscataway, NJ 08854\\ USA \&
	Instytut Matematyczny\\
	Uniwersytet \pl{Wroc{\lll}awski}\\
	Plac Grun\-waldzki 2/4\\
	50-384 \pl{Wroc{\lll}aw}\\
	Poland}
\email{mariusz.mirek@rutgers.edu}

\author{Tomasz Z. Szarek}
\address{ Tomasz Z. Szarek\\
BCAM - Basque Center for Applied Mathematics\\
48009 Bilbao\\
Spain \&
Instytut Matematyczny\\
Uniwersytet \pl{Wroc{\lll}awski}\\
Plac Grun\-waldzki 2/4\\
50-384 \pl{Wroc{\lll}aw}\\
Poland}
\email{tzszarek@bcamath.org}

\author{B{\l}a{\.z}ej Wr{\'o}bel}
\address{ B{\l}a{\.z}ej Wr{\'o}bel\\
	Instytut Matematyczny\\
	Uniwersytet \pl{Wroc{\lll}awski}\\
	Plac Grun\-waldzki 2/4\\
	50-384 \pl{Wroc{\lll}aw}\\
	Poland}
\email{blazej.wrobel@math.uni.wroc.pl}

\thanks{Mariusz Mirek was partially
supported by Department of Mathematics at Rutgers University. Mariusz
Mirek and B{\l}a{\.z}ej Wr{\'o}bel were supported by the National
Science Centre, Poland, grant Opus 2018/31/B/ST1/00204. 
Tomasz Z. Szarek was partially supported by the National
Science Centre, Poland, grant Opus 2017/27/B/ST1/01623.}

 \begin{document}
 
\selectlanguage{english}

\begin{abstract}
We prove that the discrete spherical maximal functions (in the spirit
of Magyar, Stein and Wainger) corresponding to the Euclidean spheres
in $\mathbb Z^d$ with dyadic radii have $\ell^p(\mathbb Z^d)$ bounds
for all $p\in[2, \infty]$ independent of the dimensions $d\ge 5$.  An
important part of our argument is the asymptotic formula in the Waring
problem for the squares with a dimension-free multiplicative error
term. By considering new approximating multipliers we will show
how to absorb an exponential in dimension (like $C^d$ for some $C>1$)
growth in norms arising from the sampling principle of Magyar, Stein and
Wainger, and ultimately deduce dimension-free estimates for the
discrete spherical maximal functions.
\end{abstract}

\maketitle

\section{Introduction}

\subsection{Motivations and statement of the main results.}
For $t>0$ let $E_t:=\{y\in \RR^d\colon t^{-1}y\in E\}$ denote the
dilate of a set $E\subseteq \RR^d$.  If
$\mathbb I\subseteq \RR_+:=(0, \infty)$ is a non-empty index set such
that $E_t\cap\ZZ^d\neq\emptyset$ for every $t\in\mathbb I$, then for
every $x\in\ZZ^d$ we define an averaging operator by
\begin{align*}
\mathcal M_t^Ef(x):=\frac{1}{|E_t\cap \ZZ^d|}\sum_{y\in E_t\cap\ZZ^d}f(x-y),
\qquad
f\in\ell^1(\ZZ^d).
\end{align*}
For $\emptyset\neq\mathbb I\subseteq \RR_+$ and $E\subseteq \RR^d$ as
above, and $p\in[1, \infty]$ let
$0<\mathcal C(p,\mathbb I, E)\le \infty$ be the smallest constant in
the following maximal inequality
\begin{align*}
\big\|\sup_{t\in\mathbb I}|\mathcal M^E_tf|\big\|_{\ell^p(\ZZ^d)}
\le \mathcal C(p,\mathbb I, E)\|f\|_{\ell^p(\ZZ^d)},
\qquad
f\in\ell^p(\ZZ^d).
\end{align*}
Note that $\mathcal C(\infty,\mathbb I, E)=1$, since $\mathcal M^E_t$ is an averaging operator,  and
$\mathcal C(p,\mathbb I_1, E)\le \mathcal C(p,\mathbb I_2, E)$ if $\mathbb I_1\subseteq \mathbb I_2$.

In this paper we are mainly concerned with the discrete averaging
operators corresponding to the Euclidean unit spheres
$S:=S^{d-1}:=\{x\in\RR^d \colon |x|=1\}$ in $\RR^d$, where
$S_t:=S_t^{d-1}=\{x\in\RR^d \colon |x|=t\}$.
For $x\in\ZZ^d$ and  
$t \in \sqrt{\NN}:=\{r\in(0, \infty): r^2\in\NN\}$ we shall denote the discrete spherical average by
\begin{align}
\label{eq:27}
\mathcal A^d_{t} f(x):=\mathcal M_t^Sf(x)
=\frac{1}{|S_{t}\cap \ZZ^d|}\sum_{y\in S_{t} \cap \ZZ^d} f(x-y),
\qquad 
f\in\ell^1(\ZZ^d).
\end{align}
We shall also use the convention that $S_0:=\{0\}$, and $\mathcal A^d_{0}f(x):=f(x)$.

Maximal inequalities  corresponding to the discrete averaging
operators \eqref{eq:27}  were extensively investigated by
Magyar \cite{Ma}, and Magyar, Stein and Wainger \cite{MSW}. In the latter work a complete result
was established, which asserts that $\mathcal C(p,\sqrt{\NN}, S^{d-1})<\infty$ if and
only if $p>\frac{d}{d-2}$ and $d\ge5$. The restricted weak-type
endpoint result was also proved for
$\sup_{t\in\sqrt{\NN}}|\mathcal A^d_tf|$ by Ionescu \cite{I1}.

The main purpose of this work is to understand the asymptotic behavior of the  best constants in
maximal inequalities corresponding to discrete spherical averages
\eqref{eq:27} as $d\to\infty$. Namely, we prove dimension-free
estimates for the dyadic maximal function corresponding to
\eqref{eq:27}, where the time parameter $t$ runs over the dyadic set
$\mathbb I=\mathbb D_{\ge1}=\{2^n:n\in\NN_0\}$, where $\NN_0:=\NN\cup\{0\}$.  Our main result can be
formulated as follows:
\begin{theorem}
\label{thm:0}
For every $p\in[2, \infty]$ there exists a constant $C_p>0$ such that
\begin{align}
\label{eq:28}
\sup_{d\ge5}\mathcal C(p,\mathbb D_{\ge1}, S^{d-1})\le C_p.
\end{align}
\end{theorem}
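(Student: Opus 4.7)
Since $\mathcal{A}^d_t$ is an averaging operator we have $\calC(\infty, \mathbb{D}_{\ge 1}, S^{d-1}) = 1$, and by Marcinkiewicz interpolation it suffices to prove $\calC(2, \mathbb{D}_{\ge 1}, S^{d-1}) \le C$ with $C$ independent of $d \ge 5$. The plan is to follow the Magyar--Stein--Wainger circle-method philosophy: take Fourier transforms so that $\mathcal{A}^d_t$ becomes multiplication by $\widehat{\sigma^d_t}$, where $\sigma^d_t$ is the normalized counting measure on $S_t \cap \ZZ^d$, and decompose this multiplier into major-arc and minor-arc contributions. The fundamental difficulty, already flagged in the abstract, is that the classical Magyar--Stein--Wainger sampling principle reduces a discrete rational multiplier with denominator $q$ to its continuous analogue only at the cost of a factor of order $q^{d/2}$ in $\ell^2$ operator norm, and it is precisely this exponential-in-dimension blowup that must be absorbed.

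The analytic engine will be an asymptotic formula in Waring's problem for sums of $d$ squares, applied to $N = t^2 \in \{4^n : n \ge 0\}$, in a form with \emph{dimension-free multiplicative error}:
$$\widehat{\sigma^d_t}(\xi) = \sum_{q \in \calQ} \Pi^d_{t,q}(\xi) + E^d_t(\xi),$$
with $\sup_{\xi}|E^d_t(\xi)| \le \eps_t$ and $\sum_{n \ge 0}\eps_{2^n}^2 < \infty$ independently of $d$. Given such a bound, the error term contributes to the maximal $\ell^2$ norm at most $\bigl(\sum_n \|E^d_{2^n}\|_\infty^2\bigr)^{1/2} \|f\|_{\ell^2}$ via pointwise square-function majorization and Plancherel, which is $O(1)$ in $d$.

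For the main term, the approximating multipliers $\Pi^d_{t,q}$ are to be built differently from the standard Magyar--Stein--Wainger construction. For $q = 1$ the principal-arc piece is essentially a smooth truncation of the continuous spherical Fourier transform on $\RR^d$; via a denominator-free transference one reduces the corresponding discrete dyadic maximal function to the continuous dyadic spherical maximal function on $\RR^d$, and invokes the classical dimension-free $L^2(\RR^d)$ bound for the latter to close the estimate. For $q \ge 2$ one combines the Gauss-sum bound $|G(a,q)| \lesssim q^{-1/2}$, uniform in $d$, with a carefully chosen cutoff scale, so that the $q^{d/2}$ sampling loss is offset by a narrower frequency support; this yields a geometric series summable in $q$ with constants independent of $d$, after another application of the square-function majorization over dyadic scales.

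The main obstacle is twofold. First, one must establish the Waring asymptotic for squares with multiplicative error bounded uniformly in $d \ge 5$, which demands a careful quantitative treatment of the singular series and the singular integral in high dimensions (and in particular dimension-free lower bounds on the number of representations of $4^n$ as a sum of $d$ squares). Second, one must design the approximating multipliers $\Pi^d_{t,q}$ so that they are simultaneously close enough to $\widehat{\sigma^d_t}$ for the error $E^d_t$ to be genuinely dimension-free, while being structured enough that both the discrete-to-continuous passage at $q=1$ and the summation over $q \ge 2$ proceed without dimensional loss. Once both ingredients are in place, interpolation with the trivial $p = \infty$ bound yields \eqref{eq:28}.
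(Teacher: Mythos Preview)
Your plan captures the spirit of the large-scale argument but has a genuine gap: the circle-method/Waring asymptotic only works when the radius is large relative to the dimension. The paper's version of the asymptotic (Theorem~\ref{thm:asymptotic}) requires $\lambda=t^2\ge Cd^3$, i.e.\ $t\gtrsim d^{3/2}$; below that threshold the error term $\lesssim^d(d\lambda)^{d/4}$ swamps the main term and your $\sum_n\eps_{2^n}^2<\infty$ cannot hold uniformly in $d$. The paper explicitly flags that extending the asymptotic to $\lambda\le Cd^3$ is open, so you cannot simply assume a dimension-free Waring formula valid for all dyadic $t$. Consequently the dyadic maximal function must be split into three regimes (Theorem~\ref{thm:00}): large scales $t\ge C_3d^{3/2}$, where your circle-method plan essentially applies; intermediate scales $d^{1/2}\lesssim t\lesssim d^{3/2}$, handled by a dimension-reduction trick (Lemmas~\ref{lem:8}--\ref{lem:13}) that writes $\mathfrak m_t$ as an average of lower-dimensional spherical multipliers $\mathfrak m^{(r)}_{\sqrt l}$ with $l\ge Cr^3$, to which the large-scale estimate applies; and small scales $t\lesssim d^{1/2}$, where the mass of $S_t\cap\ZZ^d$ concentrates on the Hamming cube $\{-1,0,1\}^d$ and the multiplier is controlled via uniform bounds for Krawtchouk polynomials (Proposition~\ref{prop:5}). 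Your proposal addresses neither of the latter two regimes.

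A secondary issue in the large-scale part: your split at $q=1$ versus $q\ge2$ is not quite right. The Gauss-sum bound $|G(p/q;x)|\le(2/q)^{d/2}$ gives no decay at $q=2$, so it cannot by itself beat the $C^d$ loss from the sampling principle. The paper instead splits at a large absolute threshold $n_0$ (depending on the sampling constant): for $q<n_0$ one avoids sampling altogether and uses a direct $\ell^2$ argument (Theorem~\ref{thm:1'_s1}), while for $q\ge n_0$ the factor $(2/q)^{d/2}$ combined with the $C^d$ sampling loss yields a summable $\sum_{q\ge n_0}(D_1D_2)^dq^{-d/2+1}$ (Theorem~\ref{thm:1'_s2}).
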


Theorem \ref{thm:0} is motivated by a question of Eli Stein from the
mid 1990's about the dimension-free estimates for the discrete
Hardy--Littlewood maximal functions corresponding to the Euclidean
balls, which using our notation can be stated as follows:
\begin{sq}
Let $B^2:=B^2(d):=\{x\in\RR^d: |x|\le 1\}$ be the Euclidean unit ball centered at the origin. Is it 
true that there is a constant $C>0$ such that
\begin{align}
\label{eq:29}
\sup_{d\in\NN}\mathcal C(2,\RR_+, B^2(d))=\sup_{d\in\NN}\mathcal C(2,\sqrt{\NN_0}, B^2(d))\le C\, ?
\end{align}
\end{sq}

We now give some remarks about Theorem \ref{thm:0} and Stein's question.

\begin{enumerate}[label*={\arabic*}.]
\item In fact, Theorem \ref{thm:0} is a purely
$\ell^2(\ZZ^d)$ result, and by interpolation with
$\ell^{\infty}(\ZZ^d)$ it suffices to prove that there is a constant
$C>0$ such that
\begin{align}
\label{eq:24}
\sup_{d\ge5}\mathcal C(2,\mathbb D_{\ge1}, S^{d-1})\le C.
\end{align}
\item Recently, the first and the third author in a collaboration with
Bourgain and Stein \cite{BMSW2} made a first step towards establishing
\eqref{eq:29} and proved that there is
a constant $C>0$ such that
\begin{align}
\label{eq:31}
\sup_{d\in\NN}\mathcal C(2,\mathbb D_{\ge1}, B^2(d))\le C.
\end{align}
This is a dyadic version of Stein's question, which gives some evidence that 
\eqref{eq:29} might be true. 
\item
An initial goal of \cite{BMSW2} was motivated by the desire to establish \eqref{eq:29} 
by noting a simple inequality
\begin{align}
\label{eq:32}
\sup_{0<t\le T}|\mathcal M_{t}^{B^2}f(x)|\le \sup_{0\le t\le T}|\mathcal A^d_{t}f(x)|,
\qquad T>0,\quad x\in\ZZ^d, \quad f\in \ell^1(\ZZ^d),
\end{align}
where $\mathcal A^d_{0}f(x)=f(x)$.  Since  $S_0=\{0\}$ we easily obtain \eqref{eq:32} thanks to the 
identity
\begin{align*}
\mathcal M_{t}^{B^2}f(x)=\frac{1}{|B^2_t(d)\cap\ZZ^d|}
\sum_{\lambda\in \NN_0 : \lambda\le t^2 }| S_{\sqrt{\lambda}}\cap \ZZ^d|\,\mathcal A^d_{\sqrt{\lambda}} f(x),
\qquad f\in \ell^1(\ZZ^d),
\end{align*}
which is a consequence of the disjoint decomposition 
\begin{align}
\label{eq:48}
B^2_t(d)\cap \ZZ^d=\bigcup_{ \lambda\in \NN_0 : \lambda\le t^2 } S_{\sqrt{\lambda}}^{d-1}\cap \ZZ^d,
\qquad t>0.
\end{align}
Taking into account \eqref{eq:32} it is easy to see that if one could find a constant $C>0$ such that
\begin{align}
\label{eq:33}
\sup_{d\ge5}\mathcal C(2,\sqrt{\NN}, S^{d-1})\le C,
\end{align}
then \eqref{eq:33} would imply a positive answer to Stein's question. Inequality
\eqref{eq:33} is the primary motivation behind this project.
Unfortunately, the authors of \cite{BMSW2} were unable to prove
\eqref{eq:33}. In this paper we returned to this problem with a number of  new
ideas and in Theorem \ref{thm:0} (or more precisely in \eqref{eq:24})
we establish a dyadic variant of \eqref{eq:33}. Appealing to the following estimate
\begin{align}
\label{eq:44}
\qquad\quad\big\|\sup_{t\in\sqrt{\NN}}|\mathcal A^d_tf|\big\|_{\ell^2(\ZZ^d)}\le
\big\|\sup_{t\in\mathbb D_{\ge1}}|\mathcal A^d_tf|\big\|_{\ell^2(\ZZ^d)}
+\bigg\|\Big(\sum_{n\in\NN_0}\sup_{\substack{2^n\le t\le 2^{n+1}\\t\in\sqrt{\NN}}}|\mathcal A^d_tf-\mathcal A^d_{2^n}f|^2\Big)^{1/2}\bigg\|_{\ell^2(\ZZ^d)},
\end{align}
one sees that inequality \eqref{eq:24} can be thought of as the first
step towards proving \eqref{eq:33}, since the first norm in
\eqref{eq:44} is bounded thanks to \eqref{eq:24}. Now the proof of
\eqref{eq:33} is reduced to bounding the square function in
\eqref{eq:44}, which in turn may be reduced to understanding the difference 
\begin{align}
\label{eq:45}
\big\|\mathcal A^d_{\sqrt{\lambda+1}}f-\mathcal A^d_{\sqrt{\lambda}}f\big\|_{\ell^2(\ZZ^d)}.
\end{align}
It is expected that \eqref{eq:45} should be controlled from above by a
constant (independent of $d$) multiple of $\lambda^{-1}$.  A similar
problem is apparent in \cite{BMSW2} while $\mathcal M_t^{B^2}$ is
studied in the context of inequality \eqref{eq:29}, and arguing as
above the matter is reduced to understanding \eqref{eq:45} with
$\mathcal M_t^{B^2}$ in place of $\mathcal A^d_{t}$.  Interestingly,
due to \eqref{eq:48}, this question is also related
to controlling \eqref{eq:45}. Although our methods have severe
limitations and nothing can be said about \eqref{eq:45} at the moment,
we believe that \eqref{eq:32} is the correct idea that motivates this
work and it will be helpful in establishing inequality \eqref{eq:29} in
Stein's question.

\item The idea of using \eqref{eq:32} goes back to Stein
\cite{SteinMax}, see also Stein and Str\"omberg \cite{StStr}, where
the dimension-free estimates for the continuous Hardy--Littlewood
maximal functions corresponding to the Euclidean balls were proved,
see \eqref{eq:41} below.  We shall describe this method in a moment,
as it may give an alternative way to approach \eqref{eq:33}.

\item In the proof of Theorem \ref{thm:0} we recover the asymptotic
formula for the number of lattice points in the spheres
$S_{\sqrt{\lambda}}^{d-1}$ for $d\ge5$, see \cite[Theorem 20.2, p. 456]{IK} and \cite[Theorem 5.7, p. 146]{Nat}.  In fact, in Theorem
\ref{thm:asymptotic} we improved qualitatively and quantitatively the
asymptotic formula in the classical Waring problem for the squares and
obtained the multiplicative error term that satisfies certain
uniformities with respect to radii and dimensions. This part of our
paper may be of independent interest.  More precisely, see \eqref{eq:1}, we prove that
there exists a constant $C>0$ independent of the dimension such that
\begin{align}
\label{eq:37}
|S_{\sqrt{\lambda}}\cap \ZZ^d|= \frac{\pi^{d/2}}{\Gamma(d/2)}\lambda^{d/2-1}\mathfrak{S}_d(\lambda)\big(1+o(1)\big)
\quad \text{ as } \quad \la \ge C d^3 \text{ and } \,  d\to \infty,
\end{align}
and the singular series $\mathfrak{S}_d(\lambda)$ given by
\eqref{eq:40} satisfies
$\frac{1}{2}\le \mathfrak{S}_d(\lambda)\le \frac{3}{2}$ for all
$d\ge 16$ and $\lambda\in\NN$.
We have not found in the existing literature anything about the
uniformities with respect to radii and dimensions in the context of 
the asymptotic formula \eqref{eq:37}. However, we believe it is a
very natural problem, which is interesting in its own right, and might
have been studied in the past.  Another natural question of great
interest arises whether a similar formula holds in \eqref{eq:37} if
$1\le \la \le C d^3$ and $d\to \infty$.  Our method does not work in
this regime.  We hope to investigate this problem in future work, and
also in the context of \eqref{eq:33} as well as Stein's question.

\item We conclude with a brief mention that our method allows us to
verify \eqref{eq:33}  for large scales, see Remark
\ref{rem:100}. Namely, there are universal constants $C, C_3 >0$ such
that
\begin{align*}
\sup_{d\ge5}\mathcal C(2,\sqrt{\NN} \cap (C_3 d^{3/2}, \infty), S^{d-1})\le C.
\end{align*}
However, if $1\le \la \le C_3^2 d^3$
then (in view of \eqref{eq:44}) inequality \eqref{eq:33} is reduced to investigating
\eqref{eq:45}, for which a preliminary step is to estimate
\begin{align}
\label{eq:49}
\big||S_{\sqrt{\lambda+1}}\cap \ZZ^d|-|S_{\sqrt{\lambda}}\cap \ZZ^d|\big|.
\end{align}
At this moment it is not clear whether it is possible to gain any power of $\lambda^{-1}$ in
\eqref{eq:45}.
\end{enumerate}

During the work on article \cite{BMSW2} the authors had even been
thinking about counterexamples for \eqref{eq:33}, since essentially at
the same time it was shown \cite[Theorem~2]{BMSW3} that the
dimension-free phenomenon may fail in the discrete setup.
Surprisingly, one can prove that for every $p\in(1, \infty)$ there is
a constant $C_p>0$ such that for certain ellipsoids $E(d)\subset\RR^d$
and all $d\in \NN$ one has
$\mathcal C(p,\RR_+, E(d)) \ge C_p(\log d)^{1/p}$.  This stands
in sharp contrast to the situation that we know from the continuous
setup \cite{B1, B2, B3, BMSW1, Car1, Mul1, SteinMax, StStr}. We also refer to
the survey articles \cite{DGM1} and \cite{BMSW4} for more exhaustive
exposition of the dimension-free phenomena in the continuous setting.
On the other hand, for the cubes $B^\infty(d):=[-1, 1]^d$ it was also
shown in \cite[Theorem~3]{BMSW3} that for every $p\in(3/2, \infty]$
there is a constant $C_p>0$ such that
$\sup_{d\in\NN}\mathcal C(p,\RR_+, B^{\infty}(d))\le C_p$. For
$p\in(1, 3/2]$ it still remains open whether
$\sup_{d\in\NN}\mathcal C(p,\RR_+, B^{\infty}(d))$ is finite. However,
one can prove that for all $p\in(1, \infty]$ there is a constant
$C_p>0$ such that
$\sup_{d\in\NN}\mathcal C(p,\mathbb D_{\ge1}, B^{\infty}(d))\le C_p$.
All these circumstances were the turning point, which forced the
authors of \cite{BMSW2} to change the above-described strategy in
3. and find a different way to prove \eqref{eq:31}. We adapt the
scheme of the proof and some of these strategies from \cite{BMSW2} to prove Theorem
\ref{thm:0} or more precisely \eqref{eq:24}.  Although the methods
developed in \cite{BMSW2} are important in this paper, there are some
novel ideas in our approach that we now highlight:
\begin{enumerate}[label*={\arabic*}.]
\item As opposed to the situation considered in \cite{BMSW2}, here we
use a variant of the Hardy--Littlewood circle method to analyze the
Fourier multipliers \eqref{eq:116} corresponding to the spherical
averages $\mathcal A_t^d$. This is a consequence of a more singular
nature of averages $\mathcal A_t^d$, which is noticeable in the fact
that the family of spheres $(S^{d-1}_t)_{t\in\RR_+}$ fails to be
nested in contrast to the family of balls $(B^2_t(d))_{t\in\RR_+}$,
i.e. if $t_1<t_2$ then $S^{d-1}_{t_1}\cap S^{d-1}_{t_2}=\emptyset$,
whereas $B^2_{t_1}(d)\subset B^2_{t_2}(d)$. As a matter of fact, we follow the ideas of
Magyar, Stein and Wainger \cite{MSW} and adjust their approach to the
dimension-free problem. This is a delicate process described in
Theorem \ref{thm:asymptotic}, where it was essential to keep track
carefully of how the constants arising in the error terms of the
underlying circle method depend on $d$. As a result we have obtained
\eqref{eq:37} with a multiplicative dimension-free error term, which
allows us to circumvent the absence of a dilatation structure on
$\ZZ^d$. While working on \cite{BMSW2} the authors were not able to
detect the correct relationship between radii $\lambda$ and dimensions
$d$ that would guarantee \eqref{eq:37}.   This was one of the obstacles
why the strategy described in 3. had been given up in \cite{BMSW2}.

\item An important tool of the Magyar, Stein and Wainger paper
\cite{MSW} is the sampling principle, see \cite[Corollary 2.1]{MSW}
(or Proposition \ref{prop:MSW1} below), which is a general abstract
theorem that allows one to compare $L^p(\RR^d)$ boundedness of certain
convolution operators on $\RR^d$ with $\ell^p(\ZZ^d)$ boundedness of
analogous operators on $\ZZ^d$. Although very useful in many discrete
problems, a literal application of the sampling principle, as in
\cite{MSW}, to our problem makes no sense, since it produces an
exponential in dimension (like $C^d$) growth in norm which is
prohibited, see Proposition \ref{prop:MSW1}.  This also prevented the authors of \cite{BMSW2} to use
the spherical maximal function to prove \eqref{eq:29}.  In the remarks
after \cite[Proposition 2.1]{MSW} the authors ask whether the constant
in \cite[Corollary 2.1]{MSW} can be taken to be independent of $d$ or
even $C=1$. To the best of our knowledge, if the multiplier in the
sampling principle \cite{MSW} takes values in the space of bounded
linear operators between two finite-dimensional Banach spaces $B_1$
and $B_2$, the question about the dimension-free
$\ell^p(\ZZ^d; B_1)\to \ell^p(\ZZ^d; B_2)$ estimates in the sampling
principle \cite{MSW} for all $p\in[1, \infty]$ is still open. However, if $B_1$ and $B_2$ are
finite-dimensional Hilbert spaces Kovrizhkin \cite{Kov}, and also
recently Tao \cite{Tao} gave independently negative answers to this
question as long as $p$ is sufficiently close to $1$ or $\infty$. The
answer probably is negative for all $p\neq2$, but at this moment it is
open. In the Hilbert space setup if $p=2$ the $C^d$ factor from the
samplng principle in \cite{MSW} may be deleted by a simple application
of the Plancherel theorem. In our case the situation is more
complicated since we work with non-Hilbert spaces.  Even for $p=2$ it
is suspected that the constant in the sampling principle \cite{MSW}
(in the non-Hilbert setting) is no longer $1$, and in fact depends
exponentially on the dimension, but we do not have a proof of this.
Working on the current project, along the way, we come across some
perhaps unexpected property of approximating multipliers, which
essentially permits us to eat the exponential growth in dimension from
the sampling principle. Specifically, we modified the approximating
multiplier from \cite{MSW} by considering new multipliers
\eqref{eq:atpq} and \eqref{eq:btpq}, which produce acceptable error
terms \eqref{eq:mtxidec}, and absorb the exponential growth arising in
the sampling principle, see Theorem \ref{thm:1'_s2}.  We hope that our
approach will be also useful when the dimension-free estimates will be
discussed for $p\neq2$.

\item Finally, in Lemma \ref{lem:aux_4} and Lemma \ref{lem:aux_5} we
provided estimates of the Fourier transform corresponding to the
continuous spherical measure on $S^{d-1}$ that lead to the
dimension-free bounds in Lemma \ref{lem:PerSm} and Lemma
\ref{lem:PerSm'}.  Although the Fourier transform estimates of the
surface measures received considerable attention over the years, much
to our surprise it seems that no extensions delivered in Lemma
\ref{lem:aux_4} and Lemma \ref{lem:aux_5} have appeared in the
literature.  The estimates from Lemma \ref{lem:aux_5} are actually
very much in the spirit of Bourgain's result \cite[eq. (10), (11),
(12), p. 1473]{B1}, where the estimates of multipliers associated with
the Hardy--Littlewood averages over convex symmetric bodies in $\RR^d$
are provided in terms of the corresponding isotropic
constants. Interestingly, in contrast to Bourgain's proof \cite{B1}
the proofs of our results are based on elementary manipulations of the
Bessel functions, (like change the contour of integration, see Lemma
\ref{lem:aux_4}) and uniform estimates of the Bessel functions that
lead to the conclusion of Lemma \ref{lem:aux_5}.  Lemma
\ref{lem:aux_5} plays an essential role in the estimates of
multipliers \eqref{eq:atpq} and \eqref{eq:btpq}. This is a new
ingredient which was not apparent in \cite{MSW}.
\end{enumerate}

Since the Magyar, Stein and Wainger paper \cite{MSW} is critical in
this paper and has had a profound impact on the discrete harmonic
analysis we conclude with a brief mention about the current state of
the art in the related areas. Kesler, Lacey and Mena \cite{KLM},
\cite{KLM2} started to develop sparse estimates in the context of
discrete spherical averages. It is a very successful line of research,
which significantly enhanced the field of discrete harmonic
analysis. In \cite{KLM2} conjecturally sharp sparse bounds for
$\sup_{\lambda\in\sqrt{\NN}}|\mathcal A_{\lambda}^{d}f|$ were proved,
and used to give a new proof of Magyar, Stein and Wainger theorem
\cite{MSW} as well as the endpoint result of Ionescu \cite{I1} in a
fairly unified way. Another interesting line of research has been
initiated by Hughes \cite{H1}, who asked about the bounds for
$\mathcal C(p, \mathbb L, S^{d-1})$, where $\mathbb L\subset\NN$ is a
lacunary set. Hughes also observed (even though the
Magyar--Stein--Wainger theorem \cite{MSW} is sharp) that it also makes
sense to study $\mathcal C(p, 2\NN+1, S^{d-1})$ for $d=4$ upon
restricting the radii $\lambda$ to odd integers. Specifically, Hughes
\cite{H1} constructed a very sparse set $\mathbb L \subset\NN$ of
radii such that $\mathcal C(p, \mathbb L, S^{d-1})<\infty$ for
$\frac{d}{d-2}\le p\le \infty$ and $d\ge 4$. The latter result was
recently extended by Kesler, Lacey and Mena \cite{KLM}, where it was
shown that $\mathcal C(p, \mathbb L, S^{d-1})<\infty$ for any lacunary
sequence $\mathbb L\subset \NN$ and any $\frac{d-2}{d-3}< p\le \infty$
with $d\ge5$. The case $d=4$ was recently established by Anderson and
Madrid \cite{AM} for $\frac{d+1}{d-1}< p\le \infty$ and all lacunary
sequences $\mathbb L\subset \sqrt{\NN\setminus 4\NN}$.  Cook and Hughes
\cite{CH} studied similar problems in the context of Birch forms, and
specifically recovered the main result from \cite{KLM}.  They also
illustrated in \cite{CH}  that
for any $1<p< \frac{d}{d-1}$ there exists a set of lacunary radii
$\mathbb L\subset\NN$ such that
$\mathcal C(p, \mathbb L, S^{d-1})=\infty$. This negative result remains 
also true \cite{CH} for averages over more general forms in the spirit
of Birch. This is a remarkable phenomenon that exhibit some peculiar
features in the discrete world.  Finally, it is worth noting that the
negative result from \cite{CH} does not exclude positive results for
$1<p< \frac{d}{d-1}$. Namely, Cook showed that
$\mathcal C(p, \mathbb L, S^{d-1})<\infty$ for all $1<p\le \infty$ and
$d\ge 5$ by constructing a very sparse sequence of radii \cite{C3}, he
also showed a similar phenomenon \cite{C1} for the averages associated
to a certain class of homogeneous algebraic hypersurfaces.

\subsection{Dimension-free estimates in the continuous setting}
We now make a link between the strategy described above with the ideas
of the proof of dimension-free estimates for the Hardy--Littlewood
maximal functions associated with the continuous averaging operators
over the Euclidean balls in $\RR^d$.

For every $t>0$ and $x\in\RR^d$ we define the continuous Hardy--Littlewood averaging operator by
\begin{align*}
M_t^{B^2}f(x):=\frac{1}{|B^2_t|}\int_{B^2_t}f(x-y)dy, \qquad f\in L^1_{\rm loc}(\RR^d).
\end{align*}
For $\emptyset\neq\mathbb I\subseteq \RR_+$  and $p\in[1, \infty]$ let
$0<C(p,\mathbb I, B^2(d))\le \infty$ be the smallest constant in
the following maximal inequality
\begin{align}
\label{eq:34}
\big\|\sup_{t\in\mathbb I}|M_t^{B^2}f|\big\|_{L^p(\RR^d)}\le C(p,\mathbb I, B^2(d))\|f\|_{L^p(\RR^d)},
\qquad f\in L^p(\RR^d).
\end{align}
Using a standard covering argument for $p=1$ and interpolation with $p=\infty$  it is not hard
to see that $C(p,\RR_+, B^2(d))<\infty$ for every $p\in(1, \infty]$, since $C(\infty,\RR_+, B^2(d))=1$.
Stein \cite{SteinMax} (see
also Stein and Str\"omberg \cite{StStr}) proved that there
exists a constant $C_p>0$ depending only on $p\in(1, \infty]$ such
that
\begin{align}
\label{eq:41}
\sup_{d\in\NN}C(p,\RR_+, B^2(d))\le C_p.
\end{align}
The key idea from \cite{SteinMax, StStr} to establish \eqref{eq:41} is
to use the spherical averaging operator, defined for any $t>0$ and
$x\in\RR^d$ by
\begin{align*}
A_t^d f(x):=\int_{S^{d-1}} f(x-t\theta)d\mu^d(\theta), \qquad f\in C_c(\RR^d),
\end{align*}
where $\mu^d$ denotes the normalized surface measure on
$S^{d-1}$, see \eqref{eq:42} below. We now see that the spherical operator $\mathcal A_t^d$
from \eqref{eq:27} is a discrete analogue of the operator $A_t^d$.
Let $C(p, \mathbb I, S^{d-1})$ be the best constant in inequality
\eqref{eq:34} with $A_t^d$ in place of $M_t^{B^2}$.  It is very well
known from the results of Stein \cite{Ste0} for all $d\ge 3$, and
Bourgain \cite{Bou0} for $d=2$ that $C(p, \RR_+, S^{d-1})<\infty$ if
and only if $\frac{d}{d-1}<p\le \infty$. We also know that
$C(p, \mathbb D, S^{d-1})<\infty$ for all $1<p\le \infty$ as it was
shown by Calder{\'o}n \cite{Cal} and independently by Coifman and
Weiss \cite{CW}. Using polar coordinates one easily sees that
\begin{align}
\label{eq:68}
\sup_{t>0}|M_{t}^{B^2}f(x)|\le \sup_{t>0}|A_{t}^df(x)|.
\end{align}
Now the method of rotations enables one to view high-dimensional
spheres as an average of rotated low-dimensional ones, and
consequently one can conclude that for every $d\ge 2$ and
$p>\frac{d}{d-1}$ we have
\begin{align}
\label{eq:53}
C(p, \RR_+, S^{d})\le C(p, \RR_+, S^{d-1}) < \infty.
\end{align}
Hence, the sequence $(C(p, \RR_+, S^{d-1}))_{d\in\NN}$ is
non-increasing, and in particular bounded, in $d > \frac{p}{p-1}$.  Therefore,
to prove \eqref{eq:41} we fix $p\in(1, \infty)$ and pick the smallest integer $d_0\in\NN$ such that
$d_0>\frac{p}{p-1}$. We can assume, without loss of generality, that
$d>d_0$, then by  \eqref{eq:68},
\eqref{eq:53} and Stein's result we conclude
\begin{align*}
C(p,\RR_+, B^2(d))\le C(p, \RR_+, S^{d-1})\le  C(p, \RR_+, S^{d_0-1}) < \infty,
\end{align*}
and \eqref{eq:41} follows.

Thinking about Stein's question \eqref{eq:29} the authors of
\cite{BMSW2} tried to adapt the ideas of the proof of \eqref{eq:41} to
the discrete setting. Although in \eqref{eq:32} there is a discrete
analogue of \eqref{eq:68} it is completely unclear whether there is a
discrete analogue of \eqref{eq:53}. 
More precisely, it is interesting to know whether
for every $d\ge5$ and
$p>\frac{d}{d-2}$ the following is true
\begin{align}
\label{eq:69}
\mathcal C(p, \RR_+, S^{d})\le \mathcal C(p, \RR_+, S^{d-1}).
\end{align}
The estimate \eqref{eq:69} is not easy even for $p=2$, mainly due to
the lack of the dilatation structure and the method of rotation on
$\ZZ^d$, which were essential to establish \eqref{eq:41} in
\cite{SteinMax, StStr}.  A natural remedy for the first of these
obstacles is the asymptotic formula with a dimension-free
multiplicative error term as in \eqref{eq:37}, whereas for the second
one are symmetries of $S_{\sqrt{\lambda}}\cap \ZZ^d$:
\begin{enumerate}[label*={(\alph*)}] 
\item\label{sym:1} If $(x_1,\ldots, x_d)\in S_{\sqrt{\lambda}}\cap \ZZ^d$, then
$(\varepsilon_1x_1,\ldots, \varepsilon_dx_d)\in S_{\sqrt{\lambda}}\cap \ZZ^d$ for any  $(\varepsilon_1, \ldots, \varepsilon_d)\in\{-1, 1\}^d$.
\item\label{sym:2} If $(x_1,\ldots, x_d)\in S_{\sqrt{\lambda}}\cap \ZZ^d$, then  $(x_{\tau(1)},\ldots, x_{\tau(d)})\in S_{\sqrt{\lambda}}\cap \ZZ^d$ for any
permutation $\tau\in{\rm Sym}(d)$.
\end{enumerate}
These kind of symmetries were also strongly exploited in \cite{BMSW2} for the discrete Euclidean balls.

Let us emphasize that if we could prove \eqref{eq:69} then it would
imply \eqref{eq:33}, and moreover, in view of the Magyar, Stein and
Wainger theorem \cite{MSW} we would be able to give an affirmative
answer to Stein's question even for all $p\in(1, \infty]$ in place of
$2$ in \eqref{eq:29}, which would be a genuine discrete analogue of inequality
\eqref{eq:41}.

Finally, we remark that the method from \cite{SteinMax, StStr} is limited
to the Euclidean balls. The case of general convex symmetric bodies
$G\subset\RR^d$ (which means that $G$ is a convex, compact subset of
$\RR^d$ which is symmetric and has a non-empty interior), requires a
different approach. Let $C(p,\mathbb I, G)$ be the best constant in
\eqref{eq:34}, where $B^2$ is replaced with a general convex symmetric
bodies $G\subset\RR^d$.

Stein's work \cite{SteinMax} gave rise to the
famous conjecture, which asserts that for every $p\in(1, \infty]$
there is a constant $C_p>0$ such that
\begin{align}
\label{eq:36}
\sup_{d\in\NN}\sup_{G\in\mathfrak B(d)}C(p,\RR_+, G)\le C_p,
\end{align}
where $\mathfrak B(d)$ is the set of all convex symmetric bodies in $\RR^d$.
This problem has been studied for four decades by several authors.  We
now briefly list the current state of the art concerning
\eqref{eq:36} as well as its relations to dimension-free phenomena in the discrete setting.
\begin{enumerate}[label*={\arabic*}.]
\item Bourgain \cite{B1}, \cite{B2}, and independently  Carbery
\cite{Car1}, proved \eqref{eq:36} for $p\in(3/2, \infty]$. They also
showed a dyadic variant of \eqref{eq:36} for $p\in(1, \infty]$ with
$\mathbb D:=\{2^n:n\in\ZZ\}$ in place of $\RR_+$. Although inequality \eqref{eq:36}  for $p\in(1, 3/2]$
remains still open,  the case of $q$-balls is quite well understood.
\item For the $q$-balls $B^q(d)$ (see definition \eqref{eq:88}) we know that for every
$p\in(1, \infty]$ and for every $q\in[1, \infty]$ there is a constant
$C_{p, q}>0$ such that
$\sup_{d\in\NN}C(p,\RR_+, B^q(d))\le C_{p, q}$. This was established
by M\"uller in \cite{Mul1} (for $q\in [1, \infty)$) and by Bourgain in
\cite{B3} (for cubes $q=\infty$).
\item Dimension-free estimates in the discrete setting were initiated
by the first and the third authors in a collaboration with Bourgain
and Stein \cite{BMSW3}, \cite{BMSW4} and \cite{BMSW2}. Recently, the
first and the third authors in a collaboration with Kosz and Plewa
\cite{KMPW} proved that for every $G\in\mathfrak B(d)$ one has
\begin{align}
\label{eq:51}
C(p,\RR_+, G)\le \mathcal C(p,\RR_+, G),
\quad \text{ for all } \quad
p\in[1, \infty],
\end{align}
where the case $p=1$ corresponds to the optimal constants in the weak
type $(1, 1)$ inequalities respectively in $\RR^d$ and $\ZZ^d$. It was
also shown in \cite{KMPW} that
$C(1,\RR_+, B^{\infty}(d))= \mathcal C(1,\RR_+, B^{\infty}(d))$, which
in view of Aldaz's result \cite{Ald1} yields
$\mathcal C(1,\RR_+, B^{\infty}(d))\ _{\overrightarrow{d\to\infty}}\ \infty$. Thus
the boundedness of $\sup_{d\in\NN}\mathcal C(p,\RR_+, B^{\infty}(d))$
for $p\in(1, 3/2]$ cannot be deduced by interpolation from
\cite[Theorem~3]{BMSW3} for $p\in(3/2, \infty]$.
Inequality \eqref{eq:51} exhibits a well known phenomenon in harmonic
analysis, which states that it is harder to establish bounds for
discrete operators than the bounds for their continuous counterparts,
and this is the best what we could prove in this generality.
\end{enumerate}

\subsection{Overview of the methods and proofs}
We now give an overview of the proofs of our main results.  The proof
of Theorem \ref{thm:0} is similar in spirit to the proof of inequality
\eqref{eq:31} (we refer to \cite{BMSW2} for the details), but with
several new difficulties arising that require new ideas to overcome.
The proof of Theorem \ref{thm:0} will consist of three
steps. Specifically, we shall prove the following dimension-free
estimates, which will result in inequality \eqref{eq:28}. 

\begin{theorem}
\label{thm:00}
There exist absolute constants $C, C_0, C_1, C_2, C_3>0$ such that the following is true. 
\begin{enumerate}[label*={\arabic*}.]
\item The large-scale estimate holds
\begin{align}
\label{eq:ls}
\sup_{d\ge5}\mathcal C(2,\mathbb D_{C_3, \infty}, S^{d-1})\le C,
\end{align}
where $\mathbb D_{C_3, \infty}:=\{t\in\mathbb D_{\ge1}:t\ge C_3d^{3/2}\}$.
\item The intermediate-scale estimate holds
\begin{align}
\label{eq:is}
\sup_{d\ge5}\mathcal C(2,\mathbb D_{C_1, C_2}, S^{d-1})\le C,
\end{align}
where
$\mathbb D_{C_1, C_2}:=\{t\in\mathbb D_{\ge1}:C_1d^{1/2}\le t\le C_2d^{3/2}\}$.
\item The
small-scale estimate holds
\begin{align}
\label{eq:ss}
\sup_{d\ge5}\mathcal C(2,\mathbb D_{C_0}, S^{d-1})\le C,
\end{align}
where $\mathbb D_{C_0}:=\{t\in\mathbb D_{\ge1}:1\le t\le C_0d^{1/2}\}$.
\end{enumerate}
\end{theorem}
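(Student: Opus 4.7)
My plan is to handle the three regimes separately, using the Hardy--Littlewood circle method together with the sampling principle in the large-scale range, and relying on square-function/orthogonality arguments in the intermediate and small-scale ranges where the asymptotic formula is unavailable. Let me denote the Fourier multiplier of $\mathcal{A}_t^d$ by
\begin{equation*}
m_t^d(\xi)=\frac{1}{|S_t\cap\ZZ^d|}\sum_{y\in S_t\cap\ZZ^d}\e(-y\cdot\xi),\qquad \xi\in\TT^d,
\end{equation*}
so that the matter reduces to appropriate estimates for $m_t^d$. For the large-scale estimate \eqref{eq:ls} the constraint $t\ge C_3d^{3/2}$ places us inside the range where the dimension-free asymptotic formula \eqref{eq:37} (Theorem \ref{thm:asymptotic}) applies uniformly. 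I would expand $m_t^d$ via a standard major-arc decomposition, schematically
\begin{equation*}
m_t^d(\xi)\approx\sum_{q\le Q}\sum_{\substack{a\in(\ZZ/q\ZZ)^d\\\gcd(a,q)=1}}G_d(a,q)\,\Phi_t^d(\xi-a/q)\,\eta_q(\xi-a/q),
\end{equation*}
where $G_d(a,q)$ is a normalized Gauss sum, $\Phi_t^d$ is essentially the Fourier transform of the continuous spherical measure, and $\eta_q$ is a smooth cut-off. The approximating multipliers \eqref{eq:atpq} and \eqref{eq:btpq} are designed so that the Magyar--Stein--Wainger sampling principle (Proposition \ref{prop:MSW1}) can be applied while absorbing the $C^d$ loss, as asserted by Theorem \ref{thm:1'_s2}; combined with the dimension-free Fourier estimates of Lemma \ref{lem:aux_5} for $\Phi_t^d$ and a dyadic Littlewood--Paley argument, this gives \eqref{eq:ls}.

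For the intermediate \eqref{eq:is} and small-scale \eqref{eq:ss} estimates the asymptotic formula cannot be invoked, and moreover each of $\mathbb D_{C_0}$ and $\mathbb D_{C_1,C_2}$ contains only $O(\log d)$ dyadic scales, so that a brute union bound is not admissible. I would use instead the square-function reduction
\begin{equation*}
\Big\|\sup_{t\in\mathbb I}|\mathcal{A}_t^df|\Big\|_{\ell^2(\ZZ^d)}\le\|\mathcal{A}_{t_0}^df\|_{\ell^2(\ZZ^d)}+\Big\|\Big(\sum_{t\in\mathbb I}|\mathcal{A}_t^df-\mathcal{A}_{t_0}^df|^2\Big)^{1/2}\Big\|_{\ell^2(\ZZ^d)}
\end{equation*}
for a convenient reference scale $t_0$, and dominate each difference by a pointwise multiplier bound for $|m_t^d(\xi)-m_{t_0}^d(\xi)|^2$ that is summable over dyadic indices. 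The decay should come from the dimension-free bounds of Lemma \ref{lem:aux_5} together with the symmetries \ref{sym:1} and \ref{sym:2}, which reduce matrix coefficients of $m_t^d$ to averages over the signed symmetric group acting on $S_t\cap\ZZ^d$ and supply the necessary cancellation. In the small-scale range $t^2\le C_0^2d$, a point of $S_t\cap\ZZ^d$ has at most $C_0^2 d$ non-zero coordinates, so the sphere is combinatorially thin and $m_t^d$ can even be handled directly from \ref{sym:1}--\ref{sym:2}.

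The main obstacle I expect is the intermediate regime \eqref{eq:is}: neither the uniform asymptotic \eqref{eq:37} nor the combinatorial thinness of the small-scale range is available there, and the bare $C^d$ loss from the sampling principle must still be neutralized. A natural remedy is to split $\mathbb D_{C_1,C_2}$ further into sub-ranges on which different pieces of the circle-method decomposition dominate, using the quantitative $d$-dependence tracked in Theorem \ref{thm:asymptotic} together with Lemmas \ref{lem:aux_4} and \ref{lem:aux_5} to keep every error term dimension-free.
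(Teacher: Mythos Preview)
Your large-scale outline is broadly in line with the paper, but the intermediate- and small-scale parts contain a genuine gap in strategy.

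\textbf{The square-function comparison to a reference spherical multiplier is the wrong reduction.} You propose to bound $\sum_{t\in\mathbb I}|m_t^d(\xi)-m_{t_0}^d(\xi)|^2$ pointwise. Controlling differences of two \emph{spherical} multipliers is precisely the problem the paper flags as open (see the discussion around \eqref{eq:45}); there is no known dimension-free estimate for $|m_t^d-m_{t_0}^d|$ that decays in the dyadic index. The paper avoids this entirely by comparing $m_t^d$ not to another spherical multiplier but to the explicit semigroup multipliers $p^1_\lambda(\xi)=e^{-\kappa(d,\lambda)^2\sum\sin^2(\pi\xi_i)}$ and $p^2_\lambda(\xi)=(-1)^\lambda e^{-\kappa(d,\lambda)^2\sum\cos^2(\pi\xi_i)}$, chosen according to whether $|V_\xi|\le d/2$ or not. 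The corresponding maximal functions are handled for free by the dimension-free semigroup bound \eqref{eq:47}, and what remains is to prove pointwise estimates of the form $|m_t^d(\xi)-p^i_\lambda(\xi)|\lesssim \min\{(\kappa\|\xi\|)^{-a},(\kappa\|\xi\|)^b\}+\kappa^{-1}$ (Propositions \ref{prop:4p} and \ref{prop:4}), which are summable in dyadic $t$.

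\textbf{Intermediate scale: you are missing the dimension-reduction trick.} Your proposed remedy---splitting $\mathbb D_{C_1,C_2}$ into sub-ranges and applying the circle-method decomposition---cannot work, because Theorem \ref{thm:asymptotic} requires $\lambda\ge Cd^3$, which fails on the entire intermediate range. The paper's mechanism is completely different: Lemma \ref{lem:9} writes $\mathfrak m_t^{(d)}$ as an average (up to an exponentially small error) of lower-dimensional multipliers $\mathfrak m_{\sqrt l}^{(r)}$ with $l\ge \varepsilon^3\kappa(d,\lambda)^2 r$. One then chooses $r\simeq \kappa(d,\lambda)^\delta$ so that $l\ge Cr^3$, and only now does Proposition \ref{prop:mtinfla} (i.e.\ the large-scale asymptotic) apply \emph{in dimension $r$}. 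This, together with a convexity lemma (Lemma \ref{lem:7}) exploiting the permutation symmetry, yields the decay estimate of Proposition \ref{prop:2}. Nothing in your proposal supplies this passage from dimension $d$ to dimension $r$.

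\textbf{Small scale: you are missing the Krawtchouk polynomial input.} The observation that points of $S_t\cap\ZZ^d$ have at most $t^2\le C_0^2 d$ nonzero coordinates is trivially true but not by itself useful. The substantive fact (Lemma \ref{lem:15}) is that almost all of the mass of $S_t\cap\ZZ^d$ sits on the Hamming cube $\{-1,0,1\}^d$, after which $\mathfrak m_t(\xi)$ becomes essentially a Krawtchouk polynomial and the uniform bound \eqref{eq:70} gives the required exponential decay (Proposition \ref{prop:5}). Your outline does not identify this structure, and the symmetries \ref{sym:1}--\ref{sym:2} alone do not produce the needed quantitative decay.
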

Since we are working with the dyadic numbers the exact values of
$C_0, C_1, C_2, C_3$ will never play a role as long as they are
absolute constants. Indeed, if we establish \eqref{eq:ls},
\eqref{eq:is} and \eqref{eq:ss} for some constants
$C, C_0, C_1, C_2, C_3>0$, then \eqref{eq:ls}, \eqref{eq:is} and
\eqref{eq:ss} remain true with any other set of constants
$C_0', C_1', C_2', C_3'>0$ in place of $C_0, C_1, C_2, C_3$ and some
constant $C'>0$, that may depend on $C_0, C_1, C_2, C_3$, in place of
$C$. Taking into account this remark we may always adjust $C_0=C_1$ and
$C_2=C_3$ in Theorem \ref{thm:00}, which immediately yields Theorem \ref{thm:0}.  In
what follows, the implied constants will be always allowed to depend
on $C_0, C_1, C_2, C_3$ and we shall be mainly concerned with proving Theorem \ref{thm:00}.

If we restrict ourselves to small dimensions $5\le d<16,$ then Theorem
\ref{thm:00} holds by \cite{MSW}. Therefore throughout the proof of
Theorem \ref{thm:00} it suffices to assume that $d\ge 16.$

The proof of Theorem \ref{thm:00} uses a variety of Fourier methods to
understand the multipliers $\mathfrak m_t$ (see \eqref{eq:116})
corresponding to the averages $\mathcal A^d _t$, with
$t^2=\la \in \NN$. In Section \ref{sec:3'} that handles the
large-scale case \eqref{eq:ls}, when $t\ge C_3 d^{3/2}$, we follow the
ideas of Magyar, Stein and Wainger \cite{MSW} and use a variant of the
Hardy--Littlewood circle method to establish the asymptotic formula
for $\mathfrak m_t$ for $t\ge C_3 d^{3/2}$, see Theorem
\ref{thm:asymptotic}. In the proof of Theorem \ref{thm:asymptotic} we
had to keep track carefully of how the constants in all error terms
depend on the dimension.  Only an exponential growth of the form $C^d$
is admissible, which consequently determines of how large the constant
$C_3> 0$ in \eqref{eq:ls} must be. This is the novelty of this paper,
which can be thought of as a dimension-free variant of the circle
method. To the best of our knowledge this aspect has not been
discussed in the literature.  An important consequence of Theorem
\ref{thm:asymptotic} is the asymptotic formula \eqref{eq:37} with a
dimension-free multiplicative error term, which permits us to overcome
the problem with the absence of dilatation structure on $\ZZ^d$.
Theorem \ref{thm:asymptotic} is a key building block of Proposition
\ref{pro:mtxidec}, where the approximating multipliers \eqref{eq:atpq}
and \eqref{eq:btpq} are defined. This is the place, where our approach
diverges from the method presented in \cite{MSW}. By introducing
decomposition \eqref{eq:mtxidec} in Proposition \ref{pro:mtxidec} and
manipulating the parameter $n$ in the definition of the second
multiplier \eqref{eq:btpq} we were able to absorb (using the decay
from the Gauss sums \eqref{eq:2}) the exponential dimension dependence
produced by the sampling principle of Magyar, Stein and Wainger
\cite[Corollary 2.1]{MSW}, see Theorem \ref{thm:1'_s2}.  Although the
maximal function corresponding to the second approximating multiplier
\eqref{eq:btpq} is handled in Theorem \ref{thm:1'_s2}, some price must
be paid, since the sampling principle cannot be used to bound the
maximal function corresponding to the first approximating multiplier
\eqref{eq:atpq}. Fortunately the latter maximal function is estimated
in Theorem \ref{thm:1'_s1}.  Here is the place, where the
 estimates from Lemma \ref{lem:aux_5} (in the spirit of
Bourgain's isotropic-constant-estimates from \cite{B1}) of the Fourier
transform of continuous spherical measures enter into play.  The
estimates from Lemma \ref{lem:aux_4} and Lemma \ref{lem:aux_5} may be
new and of independent interest.  We hope that these observations will
be useful in extending dimension-free estimate \eqref{eq:24} for
$p\neq2$. It is noteworthy that the large-scale case for the 
Hardy--Littlewood maximal function from \cite{BMSW2} is a simple
consequence of the comparison principle from \cite[Theorem 1]{BMSW3},
since the averages $\mathcal M_t^{B^2}$ are defined over the nested
family of discrete Euclidean balls $B_t^2(d)\cap\ZZ^d$.  This
contrasts sharply with the above-described situation of the maximal
spherical averages, which being more singular required a
dimension-free variant of the circle method that we developed to
circumvent various difficulties with the lack of nestedness for the
spheres.  We finally remark that Theorem \ref{thm:asymptotic} is also
critical in the proof of Proposition \ref{prop:mtinfla}, which in turn
plays a pivotal role in the intermediate-scale estimate \eqref{eq:is}.

In Section \ref{sec:me}, we handle the intermediate-scale case
\eqref{eq:is}, when $C_1d^{1/2}\le t\le C_2d^{3/2}$. Here we mainly follow the ideas developed  in \cite[Section
2]{BMSW2}. However, there are subtle adjustments necessary to fit the
arguments to the new situation. The multiplier \eqref{eq:116} has an additional symmetry
\begin{align}
\label{eq:74}
\mathfrak m_t (\xi+(1/2,\ldots, 1/2)) = (-1)^{t^2}\mathfrak m_t (\xi),
\qquad \xi \in \TT^d, \quad t \in \sqrt{\mathbb{N}},
\end{align}
which causes new complications.  Proposition \ref{prop:0} and
Proposition \ref{prop:2} reveal the importance of property
\eqref{eq:74}, which is reflected by the appearance of various
quantities depending on $\xi+(1/2,\ldots, 1/2)$ in \eqref{eq:22'} and
\eqref{eq:23}. The proofs of Proposition \ref{prop:0} and Proposition
\ref{prop:2} exploit the symmetries of spheres $S_t\cap\ZZ^d$
described in \ref{sym:1} and \ref{sym:2}. The invariance of the
spheres $S_t\cap\ZZ^d$ under the permutation group \ref{sym:2} brought
into play probabilistic tools, which are especially important in the
proof of inequality \eqref{eq:23} from Proposition \ref{prop:2} as well as in Lemma \ref{lem:13}.
There are three results: Lemma \ref{lem:5}, Lemma \ref{lem:8} and Lemma
\ref{lem:9}, necessary for the proof of inequality \eqref{eq:23} to
work.  Lemma \ref{lem:5} asserts that an essential amount of mass of
the sphere $S_t\cap\ZZ^d$ is concentrated in the region where the
coordinates are large. Its proof can be reduced to the corresponding
result for balls from \cite{BMSW2} upon proving Lemma \ref{lem:lfs},
which gives a simple  comparison between numbers of lattice points in
balls and spheres. Lemma \ref{lem:8}, as in \cite{BMSW2}, leads us to the
so-called decrease dimension trick in Lemma \ref{lem:9}. The decrease
dimension trick resemble to some extent the method of rotations
\cite{SteinMax, StStr} from the continuous setting that enables one to
view high-dimensional spheres as an average of rotated low-dimensional
ones. Here we use Lemma \ref{lem:9} to view the original multiplier
$\mathfrak m_t$ as an average of new multipliers $\mathfrak m_l^{(r)}$
(see \eqref{eq:21}) associated with spheres $S_l^{r-1}\cap\ZZ^r$ in
lower dimensional spaces whose radii satisfy the relation
$l\ge C_3r^{3/2}$ from the previous case with respect to the new
dimensions $r\in\NN$ such that $r\le d$. Now the machinery from the
previous section can be used. Specifically, Proposition
\ref{prop:mtinfla} can be applied to the $\mathfrak m_l^{(r)}$, and
this with the aid of a variant of the convexity lemma established in \cite{BMSW2} (see
Lemma \ref{lem:7}) yields Proposition \ref{prop:2}.

Finally, in Section \ref{sec:sm} we handle the small-scale case
\eqref{eq:ss}, when $1\le t\le C_0d^{1/2}$. We proceed much the same
way as in \cite[Section 3]{BMSW2}. We find suitable approximating
multipliers and reduce the matters to the square function estimates
using Proposition \ref{prop:4}. Then we establish Lemma \ref{lem:15},
which states that an essential percentage of the mass of
$S_t\cap\ZZ^d$ is concentrated on the Hamming cube
$\{-1,0, 1\}^d$. This lemma can be easily deduced from its ball
counterpart \cite[Lemma 3.2]{BMSW2} upon invoking Lemma \ref{lem:lfs}.
Lemma \ref{lem:15} also shows that $\mathfrak m_t$ is closely related to the
Krawtchouk polynomial \eqref{eq:38}, see \cite{HKS}.  Using a uniform
bound for the Krawtchouk polynomials (see Property \ref{item:5} in
Theorem \ref{thm:100}) we are able to deduce in Proposition
\ref{prop:5} a decay of the multipliers $\mathfrak m_t$ at infinity.
Proposition \ref{prop:5} is the main result of Section \ref{sec:sm} and its proof
follows very closely the proof of corresponding result for the balls from 
\cite[Section 3]{BMSW2}, therefore we refer to  \cite{BMSW2} for more details.

\section{Notation}
Here we set out some basic notation that will be used throughout the paper.
The letter $d\in\NN$ is reserved for the dimension throughout this paper.
\subsection{Basic notation} The sets $\ZZ$, $\RR$, $\CC$ and $\TT:=\RR/\ZZ$ have standard
meaning.
\begin{enumerate}[label*={\arabic*}.]
\item We use $\NN:=\{1,2,\ldots\}$ and $\NN_0 := \NN\cup\{0\}$ to
denote the sets of positive integers and non-negative integers,
respectively.  We also set $\NN_N := \{1, 2, \ldots, N\}$ for any
$N \in \NN$.

\item Let $\mathbb D:=\{2^n: n\in\ZZ\}$ denote the set of all dyadic
numbers in $\RR_+:=(0, \infty)$ and the set of all dyadic numbers in
$\NN$ will be denoted by $\mathbb D_{\ge1}:=\{2^n: n\in\NN_0\}$.

\item For any $x\in\RR$ we will use the floor function
$\lfloor x \rfloor: = \max\{ n \in \ZZ : n \le x \}$, the fractional
part $\{x\}:=x-\lfloor x\rfloor$ and the distance to the nearest
integer $\|x\|:={\rm dist}(x, \ZZ)$.

\item For further reference observe that for $\eta\in\TT$ one has
$2\|\eta\|\le|\sin(\pi\eta)|\le \pi\|\eta\|$,
 since 
$|\sin(\pi\eta)|=\sin(\pi\|\eta\|)$ and 
\begin{align}
\label{eq:103}
2|\eta|\le|\sin(\pi\eta)|\le \pi|\eta|,
\quad\text{ for } \quad
0\le|\eta|\le 1/2.
\end{align}

\item We use $\ind{A}$ to denote the indicator function of a set $A$. If $S$ 
is a statement we write $\ind{S}$ to denote its indicator, equal to $1$
if $S$ is true and $0$ if $S$ is false. For instance $\ind{A}(x)=\ind{x\in A}$.

\item For $d\in\NN$ let ${\rm Sym}(d)$ be the permutation group on $\NN_d$. We know
that $|{\rm Sym}(d)|=d!$.  For $A\subseteq{\rm Sym}(d)$ let
$\mathbb P[A]:={|A|}/{d!}$ be the uniform distribution on the symmetry
group ${\rm Sym}(d)$. The expectation $\mathbb E$ will be always taken
with respect to the uniform distribution $\mathbb P$ on the symmetry
group ${\rm Sym}(d)$.

\end{enumerate}

\subsection{Asymptotic notation and magnitudes}

\begin{enumerate}[label*={\arabic*}.]

\item  The letters  $C,c, C_0, C_1, \ldots>0$ will always denote
absolute constants which do not depend on the dimension, however their
values may vary from occurrence to occurrence.

\item For two nonnegative quantities
$A, B$ we write $A \lesssim_{\delta} B$ ($A \gtrsim_{\delta} B$) if
there is an absolute constant $C_{\delta}>0$ (which possibly depends
on $\delta>0$) such that $A\le C_{\delta}B$ ($A\ge C_{\delta}B$).  We
will write $A \simeq_{\delta} B$ when $A \lesssim_{\delta} B$ and
$A\gtrsim_{\delta} B$ hold simultaneously. We will omit the subscript
$\delta$ if irrelevant.

\item For two nonnegative quantities $A, B$ we will also use the
convention that $A \lesssim^{d} B$ ($A \gtrsim^{d} B$) to say that there
is an absolute constant $C>0$ such that
$A\le C^d\, B$ ($A\ge C^d \, B$). If $A \lesssim^d B$ and $A\gtrsim^d B$ hold
simultaneously, then we write 
$A \simeq^d B$.

\item For a function $f:X\to \CC$ and positive-valued
function $g:X\to (0, \infty)$, we write $f = O(g)$ if there exists a
constant $C>0$ such that $|f(x)| \le C g(x)$ for all $x\in X$. We will
also write $f = O_{\delta}(g)$ if the implicit constant depends on
$\delta$.  For two functions $f, g:X\to \CC$ such that $g(x)\neq0$ for
all $x\in X$ we write $f = o(g)$ if $\lim_{x\to\infty}f(x)/g(x)=0$.
\end{enumerate}

\subsection{Euclidean spaces}
Denote by ${\bf 1}$ the vector $(1,\ldots,1)\in\RR^d$.

\begin{enumerate}[label*={\arabic*}.]
\item The Euclidean space $\RR^d$ is endowed with the standard inner
product
\[
x\cdot\xi:=\langle x, \xi\rangle:=\sum_{k=1}^dx_k\xi_k
\]
for every $x=(x_1,\ldots, x_d)$ and
$\xi=(\xi_1, \ldots, \xi_d)\in\RR^d$, and the corresponding  Euclidean norm is
denoted by $|x|:=|x|_2:=\sqrt{x\cdot x}$ for any $x\in\RR^d$.

\item We will write $\tau\circ x=(x_{\tau(1)}, \ldots, x_{\tau(d)})$ for every $x\in\RR^d$ and $\tau\in{\rm Sym}(d)$.

\item We will identify the $d$-dimensional torus $\TT^d:=\RR^d/\ZZ^d$
with the unit cube $Q:=[-1/2, 1/2)^d.$

\item For $x\in \RR^d$ let $\vfloor{x}$ be defined as the unique
vector in $\ZZ^d$ such that $x-\vfloor{x}\in [-1/2,1/2)^d$. In
particular note that for $\xi \in Q$ we have $\vfloor{\xi}=0.$

\item For $x\in\RR^d$ we will write
$\|x\|^2:=\|x_1\|^2+\ldots +\|x_d\|^2,$ where $\|x_j\|=\dist(x_j,\ZZ)$
for $j\in\NN_d$. Note that for $\xi\in Q$ the norm $\|\xi\|$ coincides with the Euclidean norm $|\xi|.$

\item We define 
\begin{align}
\label{eq:88}
\begin{gathered}
B^q:=B^q(d):=\Big\{x\in\RR^d\colon |x|_q:=\Big(\sum_{1\le k\le
	d}|x_k|^q\Big)^{1/q}\le 1\Big\} \quad \text{for} \quad q\in[1, \infty),\\
B^{\infty}:=B^{\infty}(d):=\{x\in\RR^d\colon|x|_{\infty}:=\max_{1\le k\le d}|x_k|\le 1\}.     
\end{gathered}
\end{align}

\item Let $S_t:=S_{t}^{d-1}:=\{x\in\RR^d \colon |x|=t\}$ denote the sphere with radius $t>0$ centered at the origin.
If $t=1$ we abbreviate $S_t^{d-1}$ to $S^{d-1}$.

\item The symbol $\sigma^d$ denotes the canonical surface measure on
the unit sphere $S^{d-1}$, and let
\begin{align}
\label{eq:42}
\mu^d:=\frac{1}{\sigma^d(S^{d-1})}\sigma^d
\end{align}
be its normalization.

\end{enumerate}

\subsection{$L^p$ spaces}
 $(X, \mathcal B(X), \nu)$
denotes a measure space $X$ with $\sigma$-algebra $\mathcal B(X)$ and
$\sigma$-finite measure $\nu$.
\begin{enumerate}[label*={\arabic*}.]

\item
  The set of  $\nu$-measurable
complex-valued functions defined on $X$ will be denoted by $L^0(X)$.
\item The set of functions in $L^0(X)$ whose modulus is integrable
with $p$-th power is denoted by $L^p(X)$ for $p\in(0, \infty)$,
whereas $L^{\infty}(X)$ denotes the space of all essentially bounded
functions in $L^0(X)$.

\item Let $B$ be a Banach space and let $L^0(X; B)$ denote the space of all $B$-measurable functions.  In
our applications we can restrict our attention to the case when the
underlying Banach space is finite dimensional. Our estimates will be
independent of the Banach spaces in question and limiting arguments
will encompass the results in the desired generality.

\item  For $p\in(0, \infty]$ we define the $L^p$ spaces of
$B$-valued functions 
\begin{align*}
L^{p}(X;B):
=\{F\in L^0(X;B):\|F\|_{L^{p}(X;B)}=\big\|\|F\|_B\big\|_{L^{p}(X)}<\infty\}.
\end{align*}
\item In our case we will usually have $X=\RR^d$ or
$X=\TT^d$ equipped with the Lebesgue measure, and $X=\ZZ^d$ endowed with the
counting measure. If $X$ is endowed with counting measure we will
abbreviate $L^p(X)$ to $\ell^p(X)$, and $L^{p}(X; B)$ to $\ell^{p}(X; B)$.
\end{enumerate}

\subsection{Fourier transforms}

\begin{enumerate}[label*={\arabic*}.]
\item The Fourier transform of a function $f\in L^1(\RR^d)$ will be denoted by
\begin{align*}
\mathcal Ff(\xi) := \int_{\RR^d} f(x) e^{-2\pi i \sprod{\xi}{x}} dx \quad \text{for any}\quad \xi\in\RR^d.
\end{align*}
More generally, if $\nu$ is a complex Borel measure on $\RR^d$ then the Fourier transform of $\nu$ is defined by
\begin{align*}
\mathcal F\nu(\xi) := \int_{\RR^d} f(x) e^{-2\pi i \sprod{\xi}{x}} d\nu (x) \quad \text{for any}\quad \xi\in\RR^d.
\end{align*}
If $f \in \ell^1\big(\ZZ^d\big)$ we define the discrete Fourier
transform by setting
\begin{align*}
\hat{f}(\xi) := \sum_{x \in \ZZ^d} f(x) e^{-2\pi i \sprod{\xi}{x}} \quad \text{for any}\quad \xi\in\TT^d,
\end{align*}
To simplify notation we will denote by $\mathcal F^{-1}$ the inverse
Fourier transform on $\RR^d$ or the inverse Fourier transform (Fourier
coefficient) on the torus $\TT^d$.  It will cause no confusions since
their meaning will be always clear from the context.

\end{enumerate}

\section{Asymptotic formulae and dimension-free Waring problem}

\label{sec:3'}

In what follows we are only interested in $t>0$ such that
$\la=t^2\in\NN$. Throughout the paper $t$ and $\la$ are always related
by the equation
\begin{equation*}
t=\sqrt{\la}.
\end{equation*}
The following notation will be used
\begin{equation*}
w_{\la}(x):=\ind{S_{\sqrt {\la}}\cap \ZZ^d}(x),\qquad x\in \ZZ^d, \quad \la \in \NN.
\end{equation*}
The Fourier transform of $w_{\la}$ is given by 
\begin{equation}
\label{eq:wlamultdef}
\hat{w}_{\la}(\xi)=\sum_{x\in
S_{\sqrt{\la}}\cap\ZZ^d}e^{2\pi i \xi\cdot x}
\end{equation}
for any $\xi\in\TT^d\equiv[-1/2, 1/2)^d$.  Note that
\begin{align} \label{10.1}
\hat{w}_{\la}(0)=|\{x\in \ZZ^d \colon |x|^2=\la\}|=|S_{\sqrt {\la}}\cap \ZZ^d|.
\end{align}
The spherical averaging operator $\mathcal A^d_{t}$ given by \eqref{eq:27} is a convolution operator with the kernel
\begin{align*}
\mathcal K_{t}(x):=\frac{1}{|S_{t}\cap \ZZ^d|}\ind{S_t\cap \ZZ^d}(x)=\frac{1}{|S_{\sqrt {\la}}\cap \ZZ^d|}w_{\lambda}(x),\qquad x\in \ZZ^d.   
\end{align*}
The  corresponding multipliers are normalized exponential sums given by
\begin{align}
\label{eq:116}
\mathfrak m_t (\xi):=\hat{\mathcal
K}_t(\xi)=\frac{1}{|S_{t}\cap\ZZ^d|}\sum_{x\in
S_{t}\cap\ZZ^d}e^{2\pi i \xi\cdot x}=\frac{1}{|S_{\sqrt {\la}}\cap \ZZ^d|}\hat{w}_{\lambda}(\xi),\qquad \xi\in\TT^d.
\end{align}
Note that the kernel $\mathcal K_{t}$ as well as its multiplier $\mathfrak m_t (\xi)$ are  invariant under permutations, 
namely, for any $\tau \in {\rm Sym}(d)$, one has
\begin{align*}
\mathcal K_{t}(\tau \circ x)=\mathcal K_{t}( x),
\qquad \text{ and } \qquad
\mathfrak m_t (\tau \circ \xi)=\mathfrak m_t (\xi).
\end{align*}
These invariance properties will be crucial  in our further arguments. For further reference we note that
\begin{equation}
\label{eq:mtsym}
\mathfrak m_t (\xi+\vo/2)= (-1)^{\la}\mathfrak m_t (\xi),\qquad \xi \in \TT^d.
\end{equation}
The proof of \eqref{eq:mtsym} is based on the identity
$\sum_{i=1}^d x_i\equiv \sum_{i=1}^d x_i^2\equiv \la \pmod 2$ for
$x\in S_{t}\cap\ZZ^d.$ Consequently, for $x\in S_{t}\cap\ZZ^d$ we have
\begin{align*}
e^{2\pi i \xi\cdot x}=e^{-\pi i |x|^2}e^{2\pi i (\xi+\vo/2)\cdot x}=(-1)^{\la}e^{2\pi i (\xi+\vo/2)\cdot x},
\end{align*}
and \eqref{eq:mtsym} follows.

\subsection{Asymptotic formula for $\hat{w}_{\la}(\xi)$}
We shall employ the circle method of Hardy and Littlewood to derive an
asymptotic formula for $\hat{w}_{\la}(\xi)$. As a consequence we
recover the asymptotic formula in the classical Waring problem for the
squares for all $d\ge 5$, since
$\hat{w}_{\la}(0)=|S_{\sqrt{\la}}\cap \ZZ^d|$.  In fact, in our
asymptotic formula, we obtain certain uniformities with respect to
radii $\lambda\ge C d^3$ (for an absolute constant $C>0$) and
dimensions $d\ge5$. This refinement of the Waring problem, as we shall
see later, is an essential novelty of this paper and will be important
in our further arguments. Before we formulate the main result of this
section we need to set necessary notation and terminology.

Let
\begin{align*}
\delta:=1/\lambda >0.
\end{align*}
Then we write 
\begin{align*}
e^{-2\pi}w_{\la}(x)=e^{-2\pi}\int_0^{1} e^{2\pi i (|x|^2-\lambda)\alpha}\,d\alpha
=\int_0^{1} e^{2\pi i |x|^2(\alpha+i\delta)} e^{-2\pi i \la \alpha}\,d\alpha,
\qquad x \in \mathbb{Z}^d,
\end{align*}
and consequently we obtain
\begin{align*}
\hat{w}_{\lambda}(\xi)=e^{2\pi} \int_0^1 s(\alpha,\delta,\xi)e^{-2\pi i \la \alpha}\,d\alpha,
\end{align*} 
where $s(\alpha,\delta,\xi)$ is an absolutely convergent series given by
\begin{align*}
s(\alpha,\delta,\xi):=\sum_{x \in \ZZ^d}e^{2\pi i x \cdot \xi}e^{2\pi i |x|^2(\alpha+i\delta)}.
\end{align*}

Let
\begin{align} \label{10.2}
N:=\big\lfloor \sqrt{\la}\big\rfloor =\floor{t}
\end{align}
and consider corresponding Farrey sequence
\begin{align*}
H_N:=\left\{\frac{p}{q}\in\QQ\colon 0\le p\le q\le N,\, (p,q)=1\right\}.
\end{align*}
Now we make a Farey dissection at level $N$ of the unit interval
\begin{align*}
[0,1)=\bigcup_{p/q \in H_N} V_N(p/q),
\end{align*}
where the sets $V_N(p/q)$ are pairwise disjoint intervals, and
\begin{align*}
\left\{\alpha\in[0,1)\colon \bigg|\alpha-\frac{p}{q}\bigg|<\frac{1}{2Nq}\right\}
\subseteq V_N\bigg(\frac{p}{q}\bigg)\subseteq
\left\{\alpha\in[0,1)\colon \bigg|\alpha-\frac{p}{q}\bigg|<\frac1{Nq}\right\}.
\end{align*}
Further we define 
\begin{equation}
\label{eq:tVN}
\tilde{V}_N\bigg(\frac{p}{q}\bigg):=V_N\bigg(\frac{p}{q}\bigg)-\frac{p}{q},
\quad \text{ for } \quad
\frac{p}{q}\in H_N.
\end{equation}
Then we see that
\begin{equation*}
\tilde{V}_N\bigg(\frac{p}{q}\bigg)=\left\{\alpha \in \RR\colon \frac{-\beta(N,p/q)}{Nq}\leq \alpha < \frac{\gamma(N,p/q)}{Nq}\right\},
\end{equation*}
where $\beta(N,p/q)$ and $\gamma(N,p/q)$ both belong to the interval $[1/2,1]$; here and later on we identify $0$ with~$1$. 

Using these sets we decompose our multiplier 
\begin{equation}
\label{eq:wcom1}
\hat{w}_{\lambda}(\xi)=e^{2\pi}\sum_{p/q\in H_N}\int_{\tilde{V}_N(p/q)}s(\alpha+p/q,\delta,\xi)e^{-2\pi i \la (\alpha+p/q)}\,d\alpha.
\end{equation}
Note that
\begin{align*}
s(\alpha+p/q,\delta,\xi)=\sum_{x \in \ZZ^d} e^{2\pi i |x|^2p/q+2\pi i x \cdot\xi}\,h_{\alpha,\delta}(x),
\end{align*}
where
\begin{align*}
h_{\alpha,\delta}(x):=e^{2\pi i |x|^2(\alpha+i\delta)},
\qquad x\in\RR^d.
\end{align*}
For further reference note that
\begin{equation}
\label{eq:aux_1}
\mathcal F (h_{\alpha,\delta})(y)=\bigg(\frac{1}{2(\delta-i \alpha)}\bigg)^{d/2}e^{-\frac{\pi}{2(\delta-i\alpha)}|y|^2},\qquad y\in \RR^d.
\end{equation}

Now summing  over the reminders modulo $q$ we may write
\begin{equation}
\label{eq:summodq}
s(\alpha+p/q,\delta,\xi)
=\sum_{n \in \NN_q^d} e^{2\pi i |n|^2p/q}\sum_{m\in \ZZ^d} e^{2\pi i (qm+n)\cdot \xi}\,h_{\alpha,\delta}(qm+n).
\end{equation}
By the Poisson summation formula applied to the inner sum in \eqref{eq:summodq} we obtain
\begin{align*}
\sum_{m\in \ZZ^d} e^{2\pi i (qm+n) \xi}\,h_{\alpha,\delta}(qm+n)=
\sum_{x \in \ZZ^d} e^{2\pi ix \cdot n/q }q^{-d}\mathcal F (h_{\alpha,\delta})(x/q-\xi).
\end{align*}
Therefore, coming back to \eqref{eq:wcom1} we conclude
$$\hat{w}_{\lambda}(\xi)=e^{2\pi}\sum_{p/q\in H_N}\int_{\tilde{V}_N(p/q)}
\sum_{x \in \ZZ^d}q^{-d}\sum_{n \in \NN_q^d} e^{2\pi i (|n|^2p/q+x\cdot n/q )}
\mathcal F (h_{\alpha,\delta})(x/q-\xi)\,e^{-2\pi i \la (\alpha+p/q)}\,d\alpha.$$
For $(p,q)=1$ and $x\in \ZZ^d$ let $G(p/q;x)$ be the $d$-dimensional Gaussian sum 
\begin{equation}
\label{eq:def:gsum}
G(p/q;x):=q^{-d}\sum_{n \in \NN_q^d} e^{2\pi i (|n|^2p/q+x\cdot n/q)}.
\end{equation}
Using \eqref{eq:def:gsum} we now write
\begin{equation}
\label{eq:wcom2}
\hat{w}_{\lambda}(\xi)=
e^{2\pi}\sum_{p/q\in H_N}e^{-2\pi i \la p/q}\int_{\tilde{V}_N(p/q)}
\sum_{x \in \ZZ^d}G(p/q;x)
\mathcal F (h_{\alpha,\delta})(x/q-\xi)\,e^{-2\pi i \la \alpha}\,d\alpha.
\end{equation}  
If $\xi=0$ then \eqref{eq:wcom2} yields the formula for the number of
lattice points in $S_t$, see \eqref{10.1}.

Fix a function $\varphi $ in $C_c^{\infty}((-1/4,1/4))$ which is equal
to $1$ on $[-1/8,1/8],$ and satisfies
$\|\varphi\|_{L^{\infty}(\RR)}\le 1.$ Then denote
\begin{equation}
\label{eq:psidef}
\psi(\xi):=\prod_{j=1}^d\varphi(\xi_j),\qquad \xi\in \RR^d.
\end{equation} 
We have almost prepared the ground to formulate the main result of this section. Let us define
\begin{align}
\label{eq:3}
M_1(\lambda, \xi, n):=&\frac12 \la^{d/2-1}\sum_{\substack{p/q \in H_N\\q<n}} e^{-2\pi i \la p/q}G(p/q;\vfloor{q\xi})\mathcal F \sigma\big(\sqrt{\la}\big(\vfloor{q\xi}/q-\xi\big)\big),\\
\label{eq:4}
M_2(\lambda, \xi, n):=&\frac12 \la^{d/2-1}\sum_{\substack{p/q \in H_N\\q\ge n}}\sum_{x \in \ZZ^d}\ e^{-2\pi i \la p/q}G(p/q;x)\psi(q\xi -x)\mathcal F \sigma\big(\sqrt{\la}(x/q-\xi)\big).
\end{align}
If $n>N=\lfloor \sqrt{\lambda}\rfloor$, then
$M_2(\lambda, \xi, n)=0$ and analogously $M_1(\lambda, \xi, 1)=0$. We now state our main result of this
section, which can be thought of as a dimension-free variant of the solution to the Waring
problem. This phenomenon is exhibited by the asymptotic formula stated
in \eqref{eq:1}, where the multiplicative error term has been obtained in the classical Waring problem for the squares 
as long as  $\la \ge C d^3$ and $d\to\infty$.

\begin{theorem}
\label{thm:asymptotic}
There exists an absolute constant $C>0$ such that for all integers
$d\ge 5$ and $\lambda>0$ satisfying $\la \ge C d^3$, and for all
$n\in\NN_{N+1}$ (with $N=\lfloor \sqrt{\lambda}\rfloor$) and $\xi \in \TT^d$ we have
\begin{align}
\label{eq:wlaxidec'}
\big|\hat{w}_{\la}(\xi)-M_1(\lambda, \xi, n)-M_2(\lambda, \xi, n)\big|\lesssim^d (d\la)^{d/4}.
\end{align}
If $n=N+1$, then \eqref{eq:wlaxidec'} gives the following estimate
\begin{align}
\label{eq:wlaxidec}
\big|\hat{w}_{\la}(\xi)-M_1(\lambda, \xi, N+1)\big|\lesssim^d (d\la)^{d/4}.
\end{align}
Taking $\xi=0$ in \eqref{eq:wlaxidec}, for $d \ge 16$ one has
\begin{align}
\label{eq:1}
|S_{\sqrt{\lambda}}\cap \ZZ^d|= \frac{\pi^{d/2}}{\Gamma(d/2)}\lambda^{d/2-1}\mathfrak{S}_d(\lambda)\big(1+o(1)\big)
\quad \text{ as } \quad \la \ge C d^3 \text{ and } \,  d\to \infty,
\end{align}
where the singular series $\mathfrak{S}_d(\lambda)$ is given by
\begin{align}
\label{eq:40}
\mathfrak{S}_d(\lambda):=\sum_{q=1}^{\infty}\sum_{\substack{1\leq p\le  q \\ (p,q)=1}}e^{-2\pi i \la p/q}G(p/q;0)
\end{align}
and satisfies the estimate $\frac{1}{2}\le \mathfrak{S}_d(\lambda)\le \frac{3}{2}$ independently of the dimension $d\ge 16$ and $\lambda\in\NN$. 
Finally, the asymptotic formula from \eqref{eq:1} also ensures that
\begin{align}
\label{wla0}
|S_{\sqrt{\lambda}}\cap \ZZ^d|\simeq \frac{\pi^{d/2}}{\Gamma(d/2)}\lambda^{d/2-1}\simeq \sigma(S^{d-1})\lambda^{d/2-1},
\qquad \la \ge C d^3, \quad d \ge 16.
\end{align}

\end{theorem}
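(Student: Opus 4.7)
\medskip

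The proof plan is to apply the Hardy--Littlewood circle method to the already-derived decomposition \eqref{eq:wcom2}, and on every major arc replace the Gaussian-with-complex-phase Fourier transform $\mathcal F(h_{\alpha,\delta})$ by the Fourier transform of the surface measure on $S^{d-1}$, while splitting the Farey sum at $q=n$ into a pointwise part (giving $M_1$) and a $\psi$-localized remainder (giving $M_2$).  The two required ingredients are (i) the identity that for each fixed $y\in\RR^d$,
\begin{equation*}
\int_{\RR} \mathcal F(h_{\alpha,\delta})(y)\, e^{-2\pi i \la \alpha}\,d\alpha \;=\; \tfrac{1}{2}\la^{d/2-1}\,\mathcal F\sigma\bigl(\sqrt{\la}\,y\bigr) + R(\la,\delta,y),
\end{equation*}
which follows from \eqref{eq:aux_1} by a contour shift (bringing $\alpha\mapsto\alpha+i\delta$ back to the real axis and using the Bochner relation between oscillatory Gaussian integrals and Bessel kernels on spheres), together with a careful estimate of the error $R$ coming from restricting the $\alpha$-integration to $\tilde V_N(p/q)$ rather than $\RR$; and (ii) the classical Gauss sum bound $|G(p/q;x)|\lesssim q^{-d/2}$ (cf.\ \eqref{eq:2}).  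On the major-arc side $q<n$, the rapid decay of $\mathcal F\sigma(\sqrt{\la}(x/q-\xi))$ in $|x/q-\xi|$ (to be supplied, in dimension-free form, by the forthcoming Lemmas \ref{lem:aux_4}--\ref{lem:aux_5}) forces the sum over $x\in\ZZ^d$ to collapse to its unique representative $x=\vfloor{q\xi}$, yielding the term $M_1$; on the minor-arc side $q\ge n$, one keeps the smooth localizer $\psi(q\xi-x)$ and absorbs everything into $M_2$.

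The heart of the argument is then to bound the error that arises from each of three operations: (a) replacing the truncated $\alpha$-integral over $\tilde V_N(p/q)$ by the full integral over $\RR$; (b) on the major arcs, discarding the tails $x\neq\vfloor{q\xi}$; and (c) on the minor arcs, discarding the Gaussian smoothing factor produced by $\delta>0$.  Each of these errors is a sum of Gauss sums times tails of $\mathcal F\sigma$ or of Fresnel integrals.  Using $|G(p/q;x)|\le q^{-d/2}$, the tail estimates on $\mathcal F\sigma$ (which are of size $(\sqrt{d}\,|\sqrt\la\,y|)^{-(d-1)/2}$ at best), and the volume $|\{(p,q)\colon q\le N\}|\le N^2$, one sees that the total error on the right of \eqref{eq:wlaxidec'} is dominated by an expression of order $C_1^{d}\,(d\la)^{d/4}$ precisely when $\la\gtrsim d^3$; the cube in $d$ appears because it is the balance point between the prefactor $\la^{d/2-1}$ (which scales like $\la^{d/2}$) and the Bessel-type decay which is governed by $\sqrt{d}/\sqrt{\la}$.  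This is the \emph{main obstacle}: every single classical Waring-for-squares error term must be bookkept so that its $d$-dependence is at most exponential, and the threshold $\la\ge C d^3$ must be chosen large enough to beat all such $d$-dependent constants simultaneously.

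Once \eqref{eq:wlaxidec'} is established, taking $n=N+1$ kills $M_2$ (since there are no Farey fractions with $q\ge N+1$ in $H_N$) and yields \eqref{eq:wlaxidec}.  Substituting $\xi=0$ (so that $\vfloor{q\xi}=0$ and $\mathcal F\sigma(0)=\sigma(S^{d-1})=2\pi^{d/2}/\Gamma(d/2)$) gives
\begin{equation*}
|S_{\sqrt\la}\cap\ZZ^d| \;=\; \tfrac{\pi^{d/2}}{\Gamma(d/2)}\,\la^{d/2-1}\,\mathfrak S_d^{N}(\la) \;+\; O\bigl(C^d(d\la)^{d/4}\bigr),
\end{equation*}
where $\mathfrak S_d^{N}(\la)$ is the truncation of \eqref{eq:40} to $q\le N$.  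Comparing the truncated series with the full singular series via the tail bound $\sum_{q>N}\sum_{(p,q)=1}|G(p/q;0)|\lesssim \sum_{q>N} q\cdot q^{-d/2}$ shows the truncation error is $o(1)$ when $d\to\infty$ and $\la\ge Cd^3$, and also forces the full tail $q\ge 2$ to be at most $\sum_{q\ge 2} q^{1-d/2}<1/2$ once $d\ge 16$; combined with the $q=1$ contribution $G(1;0)=1$ this gives $\tfrac12\le \mathfrak S_d(\la)\le \tfrac32$.  The final two-sided estimate \eqref{wla0} is then immediate from Stirling, since $\sigma(S^{d-1})=2\pi^{d/2}/\Gamma(d/2)$ and the main term dominates the error under the hypothesis $\la\ge Cd^3$.
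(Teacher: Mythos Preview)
Your overall architecture matches the paper's, but there is a genuine gap in step (b). You propose to collapse the lattice sum $\sum_{x\in\ZZ^d}$ to the single term $x=\vfloor{q\xi}$ by invoking the decay of $\mathcal F\sigma\bigl(\sqrt{\la}(x/q-\xi)\bigr)$ from Lemmas~\ref{lem:aux_4}--\ref{lem:aux_5}. This cannot succeed: the bound in Lemma~\ref{lem:aux_4} has an additive floor $e^{-cd}$ that is constant in $\eta$ and hence not summable over $\ZZ^d$, and even the optimal pointwise decay $|\mathcal F\sigma(\eta)|\lesssim |\eta|^{-(d-1)/2}$ (or the cruder $|\eta|^{-(d/2-1)}$ from $|J_\nu|\le 1$) is not summable over $\ZZ^d$, since $(d-1)/2<d$. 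The spherical Fourier transform simply does not decay fast enough to control a full lattice tail.

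The paper avoids this by \emph{not} passing to $\mathcal F\sigma$ in the error terms. The tails $E_2$ and $E_4$ are kept in the Gaussian form $\mathcal F(h_{\alpha,\delta})$ via the explicit formula \eqref{eq:aux_1}; then Lemma~\ref{lem:aux_2} handles the lattice sum $\sum_{x\neq 0} e^{-T|x+y|^2}\le C(T_0)^d e^{-T/4}$, and the elementary inequality $e^{-x}\le (d/4)^{d/4}e^{-d/4}x^{-d/4}$ converts this into the $(d\la)^{d/4}$ shape. Only the single main term $x=\vfloor{q\xi}$ is converted to $\mathcal F\sigma$, via the exact identity of Lemma~\ref{lem:aux_3} (there is no remainder $R$ in that identity). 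Lemmas~\ref{lem:aux_4}--\ref{lem:aux_5} are not used at all in the proof of Theorem~\ref{thm:asymptotic}; they appear only later, in Section~\ref{sec:3}.

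A smaller issue: for the singular series bound you write ``the full tail $q\ge 2$ is at most $\sum_{q\ge 2} q^{1-d/2}<1/2$''. But \eqref{eq:2} gives $|G(p/q;0)|\le (2/q)^{d/2}$, not $q^{-d/2}$, so the $q=2$ term alone contributes up to $1$. The paper resolves this by noting that $G(1/2;0)=2^{-d}(1+e^{\pi i})^d=0$ exactly, so the tail effectively starts at $q=3$; this is what makes $\tfrac12\le\mathfrak S_d(\la)\le\tfrac32$ work for $d\ge 16$.
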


Before we turn to the proof of Theorem \ref{thm:asymptotic} we need
several auxiliary lemmas.  
\begin{lemma}
\label{lem:aux_1}
For any integers $p, q\in\ZZ$ such that $0\le p\le q$ and  $(p,q)=1$ one has
\begin{align}
\label{eq:2}
|G(p/q;x)|\le (2/q)^{d/2},\qquad x\in \ZZ^d.
\end{align}
Moreover,
\begin{align}
\label{eq:aux_6_1}
\sum_{n\in \NN_q^d}|G(p/q;n)|^2=1.
\end{align}
\end{lemma}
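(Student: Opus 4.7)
The plan is to reduce both assertions to one-dimensional facts by exploiting the product structure of $G(p/q;x)$. Since $|n|^2 = n_1^2+\cdots+n_d^2$ and $x\cdot n = x_1n_1+\cdots+x_dn_d$, the sum defining $G(p/q;x)$ splits as
\begin{equation*}
G(p/q;x) = \prod_{j=1}^d g(p/q;x_j),\qquad g(p/q;m):=\frac{1}{q}\sum_{n=1}^{q}e^{2\pi i(pn^2+mn)/q}.
\end{equation*}
Both parts of the lemma then follow from corresponding $1$-dimensional statements about $g(p/q;m)$ raised to the $d$-th power.

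For the pointwise bound \eqref{eq:2}, I would prove the classical estimate $|g(p/q;m)|\le \sqrt{2/q}$ whenever $(p,q)=1$. The standard route is to complete the square: when $q$ is odd, $2p$ is invertible modulo $q$ and a linear change of variables reduces $\sum_{n=1}^q e^{2\pi i(pn^2+mn)/q}$ to a pure quadratic Gauss sum $\sum_n e^{2\pi i p' n^2/q}$, whose modulus is exactly $\sqrt{q}$; when $q$ is even one works modulo $2q$ (or splits into the cases $q\equiv 2\pmod 4$, where the sum vanishes, and $q\equiv 0\pmod 4$, where completing the square yields modulus at most $\sqrt{2q}$). In either case one gets $|g(p/q;m)|\le q^{-1}\sqrt{2q}=\sqrt{2/q}$, and taking the product over $j$ yields \eqref{eq:2}.

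For the $L^2$-identity \eqref{eq:aux_6_1}, the key observation is that, viewed as a function of $x\in(\ZZ/q\ZZ)^d$, the expression $q^d\, G(p/q;x)$ is precisely the discrete Fourier transform on $(\ZZ/q\ZZ)^d$ of the unimodular function $f(n):=e^{2\pi i p|n|^2/q}$. Parseval's identity on the finite group $(\ZZ/q\ZZ)^d$ (with counting measure on the physical side and normalized counting measure on the Fourier side) gives
\begin{equation*}
\sum_{x\in \NN_q^d}|q^d G(p/q;x)|^2 \;=\; q^d\sum_{n\in \NN_q^d}|f(n)|^2 \;=\; q^{2d},
\end{equation*}
from which \eqref{eq:aux_6_1} is immediate after dividing by $q^{2d}$.

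Neither step looks to present a real obstacle; the only mild care needed is the case analysis on $q\bmod 4$ for the Gauss sum bound, which is entirely classical. The point of isolating \eqref{eq:2} and \eqref{eq:aux_6_1} at this stage is that the exponent $d/2$ appearing in the pointwise bound is exactly the decay that later will be used to absorb the $C^d$-loss coming from the Magyar--Stein--Wainger sampling principle, while \eqref{eq:aux_6_1} will be used to control sums of squares of Gauss sums uniformly in $d$.
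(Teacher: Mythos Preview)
Your approach matches the paper's, which also reduces to $d=1$ via the product structure and leaves the one-dimensional facts as a simple calculation. One minor slip in your sketch: for $q\equiv 2\pmod 4$ the sum $\sum_{n=1}^q e^{2\pi i(pn^2+mn)/q}$ does not always vanish---it vanishes only when $m$ is even (try $q=2$, $p=m=1$, where the sum equals $2$). The cleanest uniform route to $|g(p/q;m)|\le\sqrt{2/q}$ is to expand $q^2|g|^2=\sum_{n_1,n_2}e^{2\pi i(p(n_1^2-n_2^2)+m(n_1-n_2))/q}$, substitute $n_1=n_2+h$, and note that the inner sum over $n_2$ vanishes unless $q\mid 2ph$, i.e.\ $q\mid 2h$; this leaves at most $\gcd(2,q)\le 2$ surviving values of $h$, giving $q^2|g|^2\le 2q$ without further case analysis.
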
 
\begin{proof}

The proof of Lemma \ref{lem:aux_1} is based on simple calculations.
Thanks to the product structure of the Gauss sums we may assume, without loss of generality, that $d=1$.
\end{proof}

\begin{lemma}
\label{lem:aux_2}
For each $T_0>0 $ there is a constant $C(T_0)>0$ such that
\begin{align*}
\sum_{x\in \ZZ^d\setminus\{0\}}e^{-T |x+y|^2}  \leq C(T_0)^d\,e^{-T/4},
\end{align*}
uniformly in $T\ge T_0$ and  $y\in Q$.  
\end{lemma}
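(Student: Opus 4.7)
The crucial geometric fact is that for any $x \in \ZZ^d \setminus \{0\}$ and $y \in Q = [-1/2,1/2)^d$, choosing any index $i$ with $x_i \neq 0$ yields $|x_i + y_i| \geq 1 - 1/2 = 1/2$, and hence $|x+y|^2 \geq (x_i+y_i)^2 \geq 1/4$. This is the sole source of the $e^{-T/4}$ factor on the right-hand side of the lemma, so the natural plan is to extract it at once and then bound the resulting residual sum uniformly in $T$ and $d$.

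Concretely, I would begin from the trivial identity
\begin{equation*}
e^{-T|x+y|^2} = e^{-T/4}\,e^{-T(|x+y|^2 - 1/4)},
\end{equation*}
which becomes useful on $\ZZ^d\setminus\{0\}$ precisely because the second exponent is non-negative there. Using the hypothesis $T\geq T_0$ to replace $T$ by $T_0$ inside the second factor, this gives
\begin{equation*}
\sum_{x \in \ZZ^d\setminus\{0\}} e^{-T|x+y|^2} \leq e^{-T/4}\,e^{T_0/4}\sum_{x\in \ZZ^d} e^{-T_0|x+y|^2}.
\end{equation*}
The sum on the right is now $T$-independent and factorizes over coordinates as
\begin{equation*}
\sum_{x\in \ZZ^d} e^{-T_0|x+y|^2} = \prod_{j=1}^{d}\sum_{x_j\in \ZZ} e^{-T_0(x_j+y_j)^2}.
\end{equation*}
A routine one-dimensional estimate (bound the $x_j=0$ term by $1$, and use $(x_j+y_j)^2\geq (|x_j|-1/2)^2$ for $x_j\neq 0$ and $|y_j|\leq 1/2$) shows that each factor is bounded uniformly in $y_j\in[-1/2,1/2)$ by the finite constant $C'(T_0):=1+2\sum_{k\geq 1}e^{-T_0(k-1/2)^2}$. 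Taking $C(T_0):=e^{T_0/4}\,C'(T_0)$ then yields the claim.

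There is essentially no hard step in this argument. The only conceptual move worth highlighting is the splitting $|x+y|^2 = 1/4 + (|x+y|^2-1/4)$, which simultaneously extracts the $e^{-T/4}$ factor demanded by the lemma and leaves a non-negative residual that can be decoupled from $T$ by means of the assumption $T\geq T_0$. After that, the only thing to verify is that the residual one-dimensional sum is bounded independently of $d$, which is immediate from the product structure. Consequently I expect no real obstacle; the whole proof should fit on half a page.
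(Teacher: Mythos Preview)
Your proof is correct and follows essentially the same approach as the paper: extract the $e^{-T/4}$ factor from the observation that $|x+y|^2-1/4\ge 0$ on $\ZZ^d\setminus\{0\}$, use $T\ge T_0$ to freeze the residual exponent, and then factorize into one-dimensional sums. The only cosmetic difference is that the paper first invokes $|x+y|^2\ge |x|^2/4$ to eliminate $y$ before factorizing (obtaining $C(T_0)=e^{T_0/4}\sum_{x\in\ZZ}e^{-T_0 x^2/4}$), whereas you keep $y$ and bound each one-dimensional sum uniformly in $y_j$; both lead to the same conclusion with comparable constants.
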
 
\begin{proof}
Observe that $|x+y|^2\ge |x|^2/4$ for $x\in \ZZ^d$ and
$y\in Q$, which is a consequence of the one-dimensional situation. 
Thus, for $y\in Q,$ we have
\begin{align*}
\sum_{x\in \ZZ^d\setminus\{0\}}e^{-T |x+y|^2}
&\leq e^{-T/4} \sum_{x\in \ZZ^d\setminus\{0\}}\exp\big(-T( |x|^2/4-1/4)\big)\\
&\leq e^{-T/4}\sum_{x \in \ZZ^d}\exp\big(-T_0( |x|^2/4-1/4)\big)\\
&\leq C(T_0)^d\,e^{-T/4},
\end{align*} 
where $C(T_0):=e^{T_0/4}\sum_{x \in \ZZ}e^{-T_0|x|^2/4}$. This completes the proof of Lemma \ref{lem:aux_2}.
\end{proof}

\begin{lemma}
\label{lem:aux_3}
Let $d\ge 5$ and define
\begin{align*}
I_{\la}(\xi):=e^{2\pi}
\int_{\RR}e^{-2\pi i \alpha} \left(\frac{1}{2(1-i\alpha)}\right)^{d/2}\exp\left(-\frac{\pi \la|\xi|^2}{2(1-i\alpha)}\right)\,d\alpha,\qquad \la>0, \quad \xi\in\RR^d.
\end{align*}
Then one has
\begin{align*}
I_{\la}(\xi)=\frac12 \mathcal F (\sigma^d)(\sqrt{\la}\xi),\qquad \la >0,\quad \xi \in \RR^d.
\end{align*}
\end{lemma}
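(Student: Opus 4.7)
The plan is to recognize the integrand in $I_\la(\xi)$ as the Fourier transform of the Gaussian $h_{\alpha,1}$ evaluated at $\sqrt{\la}\xi$, then interchange the $\alpha$- and $x$-integrations so that the $\alpha$-integral produces the distribution $\delta(|x|^2-1)$, and finally use the co-area formula to extract $\tfrac{1}{2}\mathcal F(\sigma^d)(\sqrt{\la}\xi)$. Concretely, \eqref{eq:aux_1} with $\delta=1$ and $y=\sqrt{\la}\xi$ gives
\[
I_\la(\xi)=e^{2\pi}\int_\RR e^{-2\pi i\alpha}\mathcal F(h_{\alpha,1})(\sqrt{\la}\xi)\,d\alpha,
\]
and formally swapping the order would yield
\[
I_\la(\xi)=e^{2\pi}\int_{\RR^d}e^{-2\pi|x|^2-2\pi i\sqrt{\la}\xi\cdot x}\Bigl(\int_\RR e^{2\pi i(|x|^2-1)\alpha}\,d\alpha\Bigr)dx,
\]
the inner integral being exactly the distribution $\delta(|x|^2-1)$.

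Since $|\mathcal F(h_{\alpha,1})(y)|\lesssim(1+\alpha^2)^{-d/4}$ decays only polynomially in $\alpha$, Fubini does not apply directly; I would therefore introduce a Gaussian regularizer
\[
I_\la^\epsilon(\xi):=e^{2\pi}\int_\RR e^{-\epsilon\alpha^2}e^{-2\pi i\alpha}\mathcal F(h_{\alpha,1})(\sqrt{\la}\xi)\,d\alpha,\qquad\epsilon>0,
\]
which converges to $I_\la(\xi)$ as $\epsilon\to 0^+$ by dominated convergence (absolute integrability in $\alpha$ holds for $d\ge 3$, hence certainly for $d\ge 5$). For each fixed $\epsilon>0$ the joint Gaussian decay in $\alpha$ and $x$ legitimates Fubini, and carrying out the inner Gaussian $\alpha$-integral gives $\sqrt{\pi/\epsilon}\exp(-\pi^2(|x|^2-1)^2/\epsilon)$. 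Switching to polar coordinates $x=r\omega$ and substituting $t=r^2-1$ then produces
\[
I_\la^\epsilon(\xi)=\frac{e^{2\pi}}{2}\sqrt{\pi/\epsilon}\int_{-1}^\infty G\bigl(\sqrt{1+t}\bigr)(1+t)^{(d-2)/2}\exp\bigl(-\tfrac{\pi^2 t^2}{\epsilon}\bigr)dt,
\]
where $G(r):=e^{-2\pi r^2}\int_{S^{d-1}}e^{-2\pi i\sqrt{\la}\,r\,\xi\cdot\omega}\,d\sigma^d(\omega)$.

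Finally, since $\sqrt{\pi/\epsilon}\exp(-\pi^2 t^2/\epsilon)$ is a standard Gaussian approximation of $\delta$ of total mass $1$, and the remaining factor $t\mapsto G(\sqrt{1+t})(1+t)^{(d-2)/2}$ is continuous at $t=0$, passing to the limit $\epsilon\to 0^+$ yields
\[
I_\la(\xi)=\frac{e^{2\pi}}{2}G(1)=\frac{e^{2\pi}}{2}\cdot e^{-2\pi}\int_{S^{d-1}}e^{-2\pi i\sqrt{\la}\xi\cdot\omega}\,d\sigma^d(\omega)=\frac{1}{2}\mathcal F(\sigma^d)(\sqrt{\la}\xi),
\]
which is the claim. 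The only delicate point is this Gaussian-to-$\delta$ convergence, but in polar form it reduces to a one-dimensional Laplace-type concentration against a continuous density and is routine; the exponential suppression of the Gaussian on $(-1,-c]\cup[c,\infty)$ for any fixed $c>0$ also takes care of the truncated range of $t$.
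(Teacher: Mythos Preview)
Your argument is correct. The paper does not supply its own proof of this lemma but simply refers to \cite[Lemma~6.1]{MSW}, so there is no in-paper argument to compare against; your self-contained derivation via Gaussian regularization of the formal identity $\int_\RR e^{2\pi i(|x|^2-1)\alpha}\,d\alpha=\delta(|x|^2-1)$ is a clean and standard way to establish the formula. All the analytic justifications you flag (dominated convergence for $I_\la^\epsilon\to I_\la$ using $|\mathcal F(h_{\alpha,1})|\lesssim(1+\alpha^2)^{-d/4}\in L^1(\RR)$ for $d\ge 5$, Fubini for fixed $\epsilon>0$, and the one-dimensional Gaussian-to-$\delta$ limit against the continuous density $t\mapsto G(\sqrt{1+t})(1+t)^{(d-2)/2}$, which decays exponentially as $t\to\infty$ and vanishes at $t=-1$) are indeed routine and go through without incident.
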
 
\begin{proof}
The proof of Lemma \ref{lem:aux_3} can be found in \cite[Lemma
6.1]{MSW}.
\end{proof}

We now prove Theorem \ref{thm:asymptotic}.

\begin{proof}[Proof of Theorem \ref{thm:asymptotic}]
Appealing to \eqref{eq:aux_1}, changing the variable $\alpha=\delta \beta$ and using the fact that $\delta=1/\lambda$ we see that
\begin{equation}
\label{eq:aux_1_cons}
\begin{split}
e^{2\pi}\int_{\RR}\mathcal F (h_{\alpha,\delta})(y)\,e^{-2\pi i \la \alpha}\,d\alpha
&=\delta^{-d/2+1}e^{2\pi}\int_{\RR}\bigg(\frac{1}{2(1-i \beta)}\bigg)^{d/2}e^{-\frac{\pi \la}{2(1-i\beta)}|y|^2}e^{-2\pi i  \beta}\,d\beta\\
&= \delta^{-d/2+1}I_{\la}(y)\\
&=\frac12 \la^{d/2-1}\mathcal F \sigma\big(\sqrt{\la}y\big), \qquad y\in\RR^d, \quad \lambda > 0,
\end{split}
\end{equation}
where in the last equality we have used Lemma \ref{lem:aux_3}.

Recalling the expression \eqref{eq:wcom2} for $\hat w_{\la}(\xi)$ and using 
\eqref{eq:aux_1_cons} with $y=\vfloor{q\xi}/q-\xi$ and $y=x/q-\xi$ we may write
\begin{align}
\label{eq:wlaxidec'_decom}
\hat{w}_{\la}(\xi)=(M_1-E_1+E_2)+(M_2-E_3+E_4),
\end{align}
where $M_1:=M_1(\lambda, \xi, n)$ and $M_2:=M_2(\lambda, \xi, n)$ are defined respectively  in \eqref{eq:3} and \eqref{eq:4} and $E_j:=E_j(\lambda, \xi, n)$, $1 \le j \le 4$, are defined by setting 
\begin{align*}
E_1:=&e^{2\pi}\sum_{\substack{p/q\in H_N\\q<n}}e^{-2\pi i \la p/q}\int_{(\tilde{V}_N(p/q))^c}G(p/q;\vfloor{q\xi})
\mathcal F (h_{\alpha,\delta})(\vfloor{q\xi}/q-\xi)\,e^{-2\pi i \la \alpha}\,d\alpha,\\
E_2:=&e^{2\pi}\sum_{\substack{p/q\in H_N\\q<n}}e^{-2\pi i \la p/q}\int_{\tilde{V}_N(p/q)}\sum_{\substack{x \in \ZZ^d\\x\neq \vfloor{q\xi}}}G(p/q;x)
\mathcal F (h_{\alpha,\delta})(x/q-\xi)\,e^{-2\pi i \la \alpha}\,d\alpha,
\end{align*}
and
\begin{align*}
E_3:=&e^{2\pi}\sum_{\substack{p/q\in H_N\\q \ge n}}e^{-2\pi i \la p/q}\int_{(\tilde{V}_N(p/q))^c}
\sum_{x \in \ZZ^d}G(p/q;x)\psi(q\xi -x)\mathcal F (h_{\alpha,\delta})(x/q-\xi)\,e^{-2\pi i \la \alpha}\,d\alpha,\\
E_4:=&e^{2\pi}\sum_{\substack{p/q\in H_N\\q\ge n}}e^{-2\pi i \la p/q}\int_{\tilde{V}_N(p/q)}
\sum_{x \in \ZZ^d} G(p/q;x)(1-\psi(q\xi -x))\mathcal F (h_{\alpha,\delta})(x/q-\xi)\,e^{-2\pi i \la \alpha}\,d\alpha,
\end{align*}	
with  the sets $\tilde{V}_N(p/q)$ defined in \eqref{eq:tVN}.

Now, in view of \eqref{eq:wlaxidec'_decom}, the proof of Theorem \ref{thm:asymptotic} will be completed 
once we prove that
\begin{equation}
\label{eq:wlaxidec'_Ej}
|E_j|\lesssim^d (d\la )^{d/4},\qquad \text{ for } \quad j=1,2,3,4,
\end{equation}
and establish the asymptotic formulae from \eqref{eq:1} and \eqref{wla0}. The details will be split into five steps.

\paragraph{\bf Step 1}
We now prove \eqref{eq:wlaxidec'_Ej} for $j=1$. By \eqref{eq:2} and
the fact that $(\tilde{V}_N(p/q))^c\subseteq [-1/(2Nq),1/(2Nq))^c$ we
obtain
\begin{align*}
|E_1| \lesssim^d \sum_{1\le q\le N} q^{1-d/2} \int_{ |\alpha|\ge (2Nq)^{-1}}|\mathcal F (h_{\alpha,\delta})(\vfloor{q\xi}/q-\xi)|\,d\alpha.
\end{align*}
By \eqref{eq:aux_1} we have
$|\mathcal F (h_{\alpha,\delta})(\vfloor{q\xi}/q-\xi)|\le 2^{-d/2}(\delta^2+\alpha^2)^{-d/4}$,
and since $N=\lfloor \sqrt{\lambda}\rfloor$, we
conclude
\begin{align*}
|E_1| \lesssim^d \sum_{1\le q\le N} q^{1-d/2} \int_{ |\alpha|\ge (2Nq)^{-1}}|\alpha|^{-d/2}\,d\alpha
\lesssim^d \sum_{1\le q\le N} q^{1-d/2} (Nq)^{d/2-1}\le \la^{d/4}.
\end{align*}

\paragraph{\bf Step 2}
We now prove \eqref{eq:wlaxidec'_Ej} for $j=2$.  Using \eqref{eq:2}
and \eqref{eq:aux_1} we obtain
\begin{align*}
|E_2|&\lesssim^d \sum_{1\le q \le N} q \int_{|\alpha|\le 1/(Nq)}\sum_{\substack{x \in \ZZ^d\\ x\neq \vfloor{q\xi}}}
(q^2(\delta^2+\alpha^2))^{-d/4}\,\exp\left(\frac{-\pi \delta}{2(\delta^2+\alpha^2)}\,|x/q-\xi|^2\right)\,d\alpha\\
&=
\sum_{1\le q \le N} q \int_{|\alpha|\le 1/(Nq)}(q^2(\delta^2+\alpha^2))^{-d/4}\,\sum_{x \in \ZZ^d\setminus\{0\}}
\exp\left(\frac{-\pi \delta}{2(\delta^2+\alpha^2)q^2}\, |x+\vfloor{q\xi}-q\xi|^2\right)\,d\alpha.
\end{align*}
Since $N^2 \simeq \la $ and  $\alpha^2 q^2 N^2\le 1$ we see that
\begin{equation}
\label{eq:aux_4}
\frac{\pi \delta}{2(\delta^2+\alpha^2)q^2}=\frac{\pi}{2/\la +2\la \alpha^2 q^2}\ge T_0,
\end{equation}
where $T_0>0$ is a universal constant. Therefore, by Lemma \ref{lem:aux_2} with
$y=\vfloor{q\xi}-q\xi\in Q$ we have
\begin{align*}
|E_2|\lesssim^d 	\sum_{1\le q \le N} q \int_{|\alpha|\le 1/(Nq)}(q^2(\delta^2+\alpha^2))^{-d/4}\,
\exp\left(\frac{-\pi \delta}{8(\delta^2+\alpha^2)q^2}\right)\,d\alpha.
\end{align*}
Using the inequality $e^{-x}\le (d/4)^{d/4} e^{-d/4}x^{-d/4}$ with
$x=\frac{\pi \delta}{8(\delta^2+\alpha^2)q^2}$, and recalling $\delta=1/\lambda$, we are led to
\begin{align*}
|E_2|\lesssim^d d^{d/4}\,\sum_{1\le q \le N} q \int_{|\alpha|\le 1/(Nq)} \delta^{-d/4}\, d\alpha
\lesssim^d ( d \la)^{d/4}.
\end{align*}

\paragraph{\bf Step 3}
We now prove \eqref{eq:wlaxidec'_Ej} for $j=3$. By definition of
$\psi$ the term $\psi(q\xi -x)$ is non-zero for at most one
$x\in \ZZ^d;$ moreover, $\|\psi\|_{L^{\infty}(\RR^d)}\leq 1$. Hence,
\eqref{eq:2}  and
\eqref{eq:aux_1} imply
\begin{align*}
|E_3&|\lesssim ^d  \sum_{1\le q \le N} q 
\int_{|\alpha|\ge 1/(2Nq)}\sup_{x \in \ZZ^d}\left(q^{-d/2}(\delta^2+\alpha^2)^{-d/4}\,
\exp\left(\frac{-\pi \delta}{2(\delta^2+\alpha^2)}\, |x/q-\xi|^2\right)\right)\,d\alpha\\
&\lesssim^d \sum_{1\le q\le N} q^{1-d/2}\int_{|\alpha|\ge 1/(2Nq)}|\alpha|^{-d/2}\,d\alpha
\lesssim^d \sum_{1\le q\le N} q^{1-d/2}(Nq)^{d/2-1}
\lesssim^d \la^{d/4}.
\end{align*}
\paragraph{\bf Step 4}
It remains to verify \eqref{eq:wlaxidec'_Ej} for $j=4.$ Note that, by
definition of $\psi$ if $1-\psi(q\xi -x)\neq 0,$ then
$|x-q\xi|_{\infty}>1/8.$ Hence, \eqref{eq:2} and \eqref{eq:aux_1} show
that
\begin{align*}
|E_4|&\lesssim ^d  \sum_{1\le q \le N} q  \int_{|\alpha|\le 1/(Nq)}\sum_{x \in \ZZ^d}(q^2(\delta^2+\alpha^2))^{-d/4}\,\ind{\{|x-q\xi|_{\infty}>1/8\}}\,
g(x-q\xi)
\,d\alpha\\
&=\sum_{1\le q \le N} q  \int_{|\alpha|\le 1/(Nq)}\sum_{x \in \ZZ^d}(q^2(\delta^2+\alpha^2))^{-d/4}\,\ind{\{|x+\vfloor{q\xi}-q\xi|_{\infty}>1/8\}}\,
g(x+\vfloor{q\xi}-q\xi)\,d\alpha,
\end{align*}
where
\begin{align*}
g(y):=\exp\left(\frac{-\pi \delta}{2q^2(\delta^2+\alpha^2)}\, |y|^2\right), \qquad y\in\RR^d.
\end{align*}
Since $N^2\simeq \la$ and  $\alpha^2 q^2 N^2\le 1$,
by \eqref{eq:aux_4} we may apply Lemma \ref{lem:aux_2} with
$y=\vfloor{q\xi}-q\xi\in Q$ to conclude
\begin{align*}
|E_4|&\lesssim ^d  \sum_{1\le q \le N} q  \int_{|\alpha|\le 1/(Nq)}(q^2(\delta^2+\alpha^2))^{-d/4}\\
&\qquad \qquad \times \,\left(\ind{\{|\vfloor{q\xi}-q\xi|_{\infty}>1/8\}}\exp\left(\frac{-\pi \delta}{2q^2(\delta^2+\alpha^2)}\, |\vfloor{q\xi}-q\xi|^2\right)+\exp\left(\frac{-\pi \delta}{8q^2(\delta^2+\alpha^2)}\right)\right)\,d\alpha\\
&\lesssim^d  \sum_{1\le q \le N} q  \int_{|\alpha|\le 1/(Nq)}(q^2(\delta^2+\alpha^2))^{-d/4}\exp\left(\frac{-\pi \delta}{128q^2(\delta^2+\alpha^2)}\right)\,d\alpha.
\end{align*}
Now, using the inequality $e^{-x}\le (d/4)^{d/4} e^{-d/4}x^{-d/4}$ with $x=\frac{\pi \delta}{128(\delta^2+\alpha^2)q^2} $  we are thus led to
\begin{align*}
|E_4|\lesssim^d d^{d/4}\, \sum_{1\le q \le N} q \int_{|\alpha|\le 1/(Nq)} \delta^{-d/4}\, d\alpha \lesssim^d (d\la)^{d/4}.
\end{align*}
This also completes the proof of inequality \eqref{eq:wlaxidec'} as well as \eqref{eq:wlaxidec}.

\paragraph{\bf Step 5} We now establish asymptotic formula in \eqref{eq:1}. For this purpose we observe that \eqref{eq:wlaxidec} with $\xi=0$ and \eqref{10.1} yields
\begin{align}
\label{eq:5}
\big||S_{\sqrt{\lambda}}\cap \ZZ^d|-M_1(\lambda, 0, N+1)\big|\lesssim^d (d\la)^{d/4}.
\end{align}
Combining $\mathcal F\sigma(0)=\sigma(S^{d-1})= \frac{2\pi^{d/2}}{\Gamma(d/2)}$ with \eqref{eq:3} we obtain
\begin{align*}
M_1(\lambda, 0, N+1)=\frac{\pi^{d/2}}{\Gamma(d/2)}\la^{d/2-1}\mathfrak S_d(\lambda; N),
\end{align*}
where 
\begin{align*}
\mathfrak S_d(\lambda; P):=\sum_{q=1}^{P}\sum_{\substack{1\leq p\le  q \\ (p,q)=1}}e^{-2\pi i \la p/q}G(p/q;0), \qquad P\in\NN.
\end{align*}
For $d\ge 16$ it is also not difficult to see that 
\begin{align}
\label{eq:10}
\frac{1}{2}\le \mathfrak S_d(\lambda)\le \frac{3}{2},
\end{align}
since
\begin{align*}
\mathfrak S_d(\lambda)=1+\sum_{q=3}^{\infty}\sum_{\substack{1\leq p\le  q \\ (p,q)=1}}e^{-2\pi i \la p/q}G(p/q;0),
\end{align*}
and by \eqref{eq:2} for $d \ge 16$ we have
\begin{align*}
\Big|\sum_{q=3}^{\infty}\sum_{\substack{1\leq p\le  q \\ (p,q)=1}}e^{-2\pi i \la p/q}G(p/q;0)\Big|
&\le  
\sum_{q=3}^{\infty} q^{1-d/2} 2^{d/2}
\le 
2^{d/2} 3^{1-d/2} + 2^{d/2} \int_3^{\infty} y^{-d/2 +1}\,dy \\
& \le 
3 (2/3)^{d/2} + 9 (2/3)^{d/2} \frac{1}{d/2 - 2}
\le
1/2.
\end{align*}
In a similar way, recalling that $N\simeq \lambda^{1/2}$ and using \eqref{eq:2}, we obtain
\begin{align}
\label{eq:6}
|\mathfrak S_d(\lambda)-\mathfrak S_d(\lambda; N)|
\le 2^{d/2} \sum_{q=N+1}^{\infty}q^{-d/2+1}\lesssim^d\lambda^{-d/4+1}.
\end{align}
Combining \eqref{eq:5} and \eqref{eq:6} we conclude
\begin{align}
\label{eq:7}
\Big||S_{\sqrt{\lambda}}\cap \ZZ^d|-\frac{\pi^{d/2}}{\Gamma(d/2)}\la^{d/2-1}\mathfrak S_d(\lambda)\Big|
\lesssim^d (d\la)^{d/4}
+ \frac{1}{\Gamma(d/2)}\la^{d/4} 
\simeq ^d (d\la)^{d/4}.
\end{align}
Moreover, since  $\Gamma(d/2)\simeq^d d^{d/2}$ we see that
\begin{align}
\label{eq:8}
(d\la)^{d/4} \lesssim^d \la^{-d/4+1}d^{3d/4}\cdot \frac{\pi^{d/2}}{\Gamma(d/2)}\la^{d/2-1}=d^3\bigg(\frac{d^3}{\la}\bigg)^{d/4-1}\cdot \frac{\pi^{d/2}}{\Gamma(d/2)}\la^{d/2-1}.
\end{align}
Therefore, \eqref{eq:10}, \eqref{eq:7} and \eqref{eq:8} ensure, for some universal constant $C_0>0$, that
\begin{align}
\label{eq:9}
|S_{\sqrt{\lambda}}\cap \ZZ^d|=\frac{\pi^{d/2}}{\Gamma(d/2)}\la^{d/2-1}\mathfrak S_d(\lambda)
\Big(1+O\Big(C_0^d\big(d^3\la^{-1}\big)^{d/4-1}\Big)\Big),
\end{align}
which for a large absolute constant $C>0$ and for all integers
$d\ge 16$ and $\lambda>0$ obeying $\lambda\ge Cd^3$ implies
\eqref{eq:1}. Asymptotic \eqref{eq:9} and \eqref{eq:10} also imply
\eqref{wla0} and the proof of Theorem \ref{thm:asymptotic} is
completed.
\end{proof}

\subsection{Lattice points for balls and spheres}

We shall also need a comparison between numbers of lattice points in balls and spheres.

\begin{lemma}
\label{lem:lfs}
Let $d\ge 5$. Then for all $t>0$ such that $\la=t^2\in\NN$ we have
\begin{equation}
\label{eq:lfs1}
|B_t^2(d-4)\cap \ZZ^{d-4}|\le |S_t^{d-1}\cap \ZZ^d|, 
\end{equation}
Consequently, for such $t$ we have
\begin{equation}
\label{eq:lfs2}
|S_t^{d-1}\cap \ZZ^d|\le |B_t^2(d)\cap \ZZ^d|\le (2t+1)^{4} |S_t^{d-1}\cap \ZZ^d|.
\end{equation} 
\end{lemma}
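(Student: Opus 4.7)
The plan is to derive both inequalities from a single tool: Lagrange's four-square theorem, which asserts that every nonnegative integer is a sum of four integer squares. The theorem effectively lets us trade $4$ dimensions for an increase in the "norm budget" from an inequality to an equality, which is exactly the gap between a ball in $d-4$ dimensions and a sphere in $d$ dimensions.

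For \eqref{eq:lfs1}, I would construct an injection $\iota : B_t^2(d-4) \cap \ZZ^{d-4} \to S_t^{d-1} \cap \ZZ^d$ as follows. For each admissible $y$, the quantity $\lambda - |y|^2$ is a nonnegative integer (here we use $\lambda = t^2 \in \NN$ and $|y|^2 \le \lambda$), so Lagrange provides $w(y) = (a,b,c,e) \in \ZZ^4$ with $|w(y)|^2 = \lambda - |y|^2$. Fix such a choice once and for all, and set $\iota(y) := (y, w(y)) \in \ZZ^d$. Then $|\iota(y)|^2 = |y|^2 + |w(y)|^2 = \lambda$, so $\iota(y) \in S_t^{d-1} \cap \ZZ^d$; injectivity is obvious since the first $d-4$ coordinates recover $y$.

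For \eqref{eq:lfs2}, the bound $|S_t^{d-1} \cap \ZZ^d| \le |B_t^2(d) \cap \ZZ^d|$ is trivial from $S_t^{d-1} \subset B_t^2(d)$. For the upper bound, I would identify $\ZZ^d = \ZZ^{d-4} \oplus \ZZ^4$ and use the slicing decomposition
\begin{equation*}
|B_t^2(d) \cap \ZZ^d| = \sum_{y \in B_t^2(d-4) \cap \ZZ^{d-4}} \bigl|B_{\sqrt{\lambda - |y|^2}}^2(4) \cap \ZZ^4\bigr|.
\end{equation*}
Each inner term is bounded by $|B_t^2(4) \cap \ZZ^4|$, which in turn is at most $(2\lfloor t \rfloor + 1)^4 \le (2t+1)^4$ since every coordinate of a point in $B_t^2(4) \cap \ZZ^4$ lies in $\{-\lfloor t\rfloor,\ldots,\lfloor t\rfloor\}$. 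Combining this with \eqref{eq:lfs1} already established yields
\begin{equation*}
|B_t^2(d) \cap \ZZ^d| \le (2t+1)^4 \, |B_t^2(d-4) \cap \ZZ^{d-4}| \le (2t+1)^4 \, |S_t^{d-1} \cap \ZZ^d|,
\end{equation*}
completing the proof. There is no serious obstacle here: the only nontrivial input is Lagrange's theorem, and the rest is an elementary Fubini-type count combined with the crude $\ell^\infty$-ball bound in $\ZZ^4$. The factor $(2t+1)^4$ is essentially sharp in this argument because of this $\ell^\infty$ estimate, which is the price for handing off four dimensions from the ball to the sphere.
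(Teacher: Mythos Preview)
Your proof is correct and follows essentially the same approach as the paper: Lagrange's four-square theorem to build an injection for \eqref{eq:lfs1}, and then the containment $B_t^2(d)\cap \ZZ^d \subseteq (B_t^2(d-4)\cap \ZZ^{d-4})\times ([-t,t]^4\cap \ZZ^4)$ (which your slicing decomposition makes explicit) combined with \eqref{eq:lfs1} for the upper bound in \eqref{eq:lfs2}.
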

\begin{proof}	
By the Lagrange four squares theorem for each $0\le j\le \la$ we find integers
$y_{j, d-3},y_{j, d-2}, y_{j, d-1}, y_{j, d}$ such that 
\[
\sum_{i=d-3}^{d}y_{j, i}^2=\la-j.
\]
Then  for each  $x=(x_1,\ldots,x_{d-4})\in B_t^2(d-4)\cap \ZZ^{d-4}$ we define
$\Phi \colon B_t^2(d-4)\cap \ZZ^{d-4} \to S_{t}^{d-1}\cap \ZZ^d $ by
setting
\[
\Phi(x):=(x_1,\ldots,x_{d-4},y_{j, d-3},y_{j, d-2},y_{j, d-1},y_{j, d}),\qquad \text{if}\qquad (x_1,\ldots,x_{d-4})\in S_{\sqrt{j}}^{d-5},
\]
with $0\le j\le \la.$ Since $B_t^2(d-4)\cap \ZZ^{d-4}$ decomposes as
the disjoint union
$\bigcup_{j=0}^{\la}S_{\sqrt{j}}^{d-5}\cap \ZZ^{d-4}$ the function
$\Phi$ is an injection from $B_t^2(d-4)\cap \ZZ^{d-4}$ to
$S_{t}^{d-1}\cap \ZZ^d$. Thus, we have proved \eqref{eq:lfs1}.
It remains to justify   the second
inequality in \eqref{eq:lfs2}. Observe
that
\begin{align*}
B_t^2(d)\cap \ZZ^d \subseteq (B_t^2(d-4)\cap \ZZ^{d-4})\times ([-t,t]^4\cap \ZZ^4)
\end{align*}
and consequently
$|B_t^2(d)\cap \ZZ^d|\le (2t+1)^{4}| B_t^2(d-4)\cap \ZZ^{d-4}|$, which by  \eqref{eq:lfs1} gives \eqref{eq:lfs2}.
\end{proof}

\section{General estimates for certain Fourier multipliers}
\label{sec:3}
In this section we gather general estimates for Fourier multipliers in
the continuous and discrete setup, which will be used later on in the
paper. We first provide dimension-free estimates of the Fourier
transform corresponding to the spherical measure in $\RR^d$.

\subsection{Fourier transform estimates for the continuous spherical measures}
For $r\ge2$, let $\mu^r$
denote the normalized spherical surface measure on $S^{r-1}$ as in \eqref{eq:42}. We shall abbreviate $\mu^r$ to $\mu$ if $r=d$.

Dimension-free estimates of the Fourier transforms corresponding to
the spherical measures are provided in Lemma \ref{lem:aux_4} and Lemma
\ref{lem:aux_5}.  These two results may be of independent interest.

\begin{lemma}
\label{lem:aux_4}
There exists a constant $c>0$ such that for all $r\ge 2$ and
$\eta\in \RR^r$ we have
\begin{align*}
|\mathcal F\mu^r(\eta)|\lesssim e^{-2\pi |\eta|/\sqrt{r}}+e^{-cr},
\end{align*}
where the implicit constant is independent of $r$ and $\eta$.
\end{lemma}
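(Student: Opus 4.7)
The plan is to shift the contour of integration in the standard one-dimensional integral representation of $\mathcal{F}\mu^r$ into the upper half plane. By rotational invariance and fiber integration over the axis $\eta/|\eta|$, the Funk--Hecke formula gives
$$\mathcal{F}\mu^r(\eta) = c_r \int_{-1}^{1} e^{2\pi i|\eta|u}(1-u^2)^{(r-3)/2}\,du, \qquad c_r = \frac{\Gamma(r/2)}{\sqrt{\pi}\,\Gamma((r-1)/2)} \simeq \sqrt{r}.$$
The claim is trivial for $|\eta|=0$; otherwise I set $s := 1/\sqrt{r}$. Since $(1-z^2)^{(r-3)/2}$ is holomorphic in the strip $\{|\Re z|<1\}$, Cauchy's theorem rewrites the integral as the shifted horizontal integral over $[-1+is,\,1+is]$ plus two short vertical integrals $V_{\pm}$ along $[\pm 1,\,\pm 1+is]$. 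The horizontal piece automatically carries the factor $|e^{2\pi i|\eta|(u+is)}| = e^{-2\pi|\eta|/\sqrt{r}}$, so the proof reduces to showing
$$c_r \int_{-1}^{1}|1-(u+is)^2|^{(r-3)/2}\,du \lesssim 1 \qquad\text{and}\qquad c_r(|V_-|+|V_+|) \lesssim e^{-cr}.$$

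The key identity is $|1-(u+is)^2|^2 = (1-u^2+s^2)^2 + 4u^2s^2$, and I would split $[-1,1]$ into three regions. In the \emph{central} region $|u|\leq 1/2$, where $1-u^2 \geq 3/4$, a direct estimate using $\sqrt{A^2+B^2}\leq A\sqrt{1+(B/A)^2}$ yields $|1-(u+is)^2|/(1-u^2) \leq 1 + Cs^2$; raising to the power $(r-3)/2$ with $s^2=1/r$ keeps this factor bounded by $e^{C}$, so $|1-(u+is)^2|^{(r-3)/2} \lesssim (1-u^2)^{(r-3)/2}$ and the integral is $\lesssim 1/c_r$. In the \emph{intermediate} region $1/2 < |u| \leq 1-1/\sqrt{r}$, an elementary calculation shows that the function $u\mapsto (1-u^2+s^2)^2 + 4u^2 s^2$ is monotone on this range and attains its maximum at $|u|=1/2$ with value at most $185/256 + O(1/r)$; thus $|1-(u+is)^2|^{(r-3)/2} \leq \alpha^{r}$ for some $\alpha<1$ (valid once $r\geq 16$), giving a contribution $\lesssim e^{-cr}$. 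In the \emph{boundary} region $|u|>1-1/\sqrt{r}$, one has $|1-(u+is)^2| \lesssim 1/\sqrt{r}$, and combined with length $\lesssim 1/\sqrt{r}$ this produces $r^{-(r-3)/4}$, which is absorbed into $e^{-cr}$. For the vertical pieces, parametrising $z=1+it$ with $t\in[0,s]$ gives $|1-(1+it)^2|= t\sqrt{t^2+4}\leq 3t$, so $|V_{\pm}| \leq \int_0^s (3t)^{(r-3)/2}\,dt \leq (3/\sqrt{r})^{(r-1)/2}$, again super-exponentially small and easily absorbed after multiplication by $c_r$.

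The main obstacle is the central-region bound: one must exploit the \emph{quadratic} cancellation $|1-(u+is)^2| = (1-u^2) + O(s^2)$ near $u=0$, not the crude triangle estimate that would only give an $O(s)$ correction. With $s=1/\sqrt{r}$, the latter produces a catastrophic factor of $e^{\sqrt{r}}$ when raised to the power $(r-3)/2$; only the genuine $s^2$-size of the perturbation survives the power and keeps the prefactor dimension-free. The choice $s=1/\sqrt{r}$ is essentially the largest shift compatible with $(1+O(s^2))^{r/2} = O(1)$, which is exactly what produces the precise exponent $e^{-2\pi|\eta|/\sqrt{r}}$ appearing in the lemma.
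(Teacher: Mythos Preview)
Your argument is correct and is essentially the paper's proof: both use the one-dimensional integral representation $\mathcal F\mu^r(\eta)\simeq\sqrt{r}\int_{-1}^1 e^{2\pi i|\eta|u}(1-u^2)^{(r-3)/2}\,du$ and a contour shift of size $1/\sqrt{r}$ to manufacture the factor $e^{-2\pi|\eta|/\sqrt r}$, followed by a region-by-region bound on the shifted integrand. The paper differs only in organisation: it first rescales the variable by $\sqrt r$ and discards the (already exponentially small) region near the endpoints \emph{before} shifting, so the contour stays strictly inside the domain of holomorphy of $(1-z^2)^{(r-3)/2}$; this sidesteps both your boundary-region estimate and the limiting argument at the branch points $\pm1$ that your direct shift of the full segment $[-1,1]$ tacitly requires when $r$ is even. (One harmless numerical slip: at $|u|=1/2$ the quantity $(1-u^2+s^2)^2+4u^2s^2$ equals $9/16+O(1/r)$, not $185/256$; either value is $<1$, which is all that matters.)
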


\begin{proof}
The  lemma is obvious when the dimension $r$ is
small. Thus, from now on we only focus on sufficiently large
$r\in\NN$. Throughout the proof we abbreviate $\mu^r$ to $\mu$, 
$\sigma^{r}$ to $\sigma$, and $S^{r-1}$ to $S$.  It is well known, for instance  by \cite[Appendix B.4]{Gra_C}, that 
\begin{equation}
\label{eq:FsiBes}
\mathcal F \sigma(\eta)=\frac{2\pi}{|\eta|^{r/2-1}}J_{r/2-1}(2\pi |\eta|),\qquad \eta\in \RR^r,\end{equation} 
where $J_{\nu}$ is the Bessel function of the first kind, which can be written, for $\Re \nu > -1/2$ and $u>0$ as
\begin{equation}
\label{eq:Bes}
J_{\nu}(u):=\frac{u^{\nu}}{2^{\nu}\Gamma(\nu+1/2)\sqrt{\pi}}\int_{-1}^{1} e^{isu}(1-s^2)^{\nu-1/2}\,ds.\end{equation}
Therefore, using \eqref{eq:42} together with \eqref{eq:FsiBes} and \eqref{eq:Bes} we have
\begin{align*}
\mathcal F \mu(\eta)=\frac{2\pi^{\frac{r-1}{2}}}{\sigma(S)\Gamma(\frac{r-1}{2})}\int_{-1}^1e^{-2\pi i |\eta|s}(1-s^2)^{\frac{r-3}{2}}\,ds=\frac{\Gamma(r/2)}{\pi^{1/2}\Gamma(\frac{r-1}{2})}\int_{-1}^1e^{-2\pi i |\eta|s}(1-s^2)^{\frac{r-3}{2}}\,ds.
\end{align*}
By  Stirling's formula we deduce that
\begin{equation}
\label{eq:aux_4_0}
|\mathcal F \mu(\eta)|\simeq \left|\sqrt{r} \int_{-1}^1e^{-2\pi i |\eta|s}(1-s^2)^{\frac{r-3}{2}}\,ds\right|=\left| \int_{-\sqrt{r}}^{\sqrt{r}}e^{-2\pi i |\eta|s/\sqrt{r}}(1-s^2/r)^{\frac{r-3}{2}}\,ds\right|.\end{equation}
Setting
\begin{align*}
M(u):= \int_{-\sqrt{r}}^{\sqrt{r}}e^{-2\pi i us}(1-s^2/r)^{\frac{r-3}{2}}\, ds, \qquad u \in \mathbb{R},
\end{align*}
and noting that $M(u) = M(|u|)$, $u \in \mathbb{R}$, we see that
the proof of Lemma \ref{lem:aux_4} reduces to showing that for sufficiently large $r \in \NN$ we have
\begin{equation}
\label{eq:aux_4_1}
|M(u)|\lesssim e^{-2\pi u}+e^{-cr}, \qquad u \ge 0.
\end{equation}
We now establish  \eqref{eq:aux_4_1}. We first note
\begin{align*}
|M(u)|
& \le
\bigg|\int_{r^{1/2}/100\le |s|\le r^{1/2}}e^{-2\pi is u }\bigg(1-\frac{s^2}{r}\bigg)^{\frac{r-3}{2}}\dif s\bigg|
\\
& \quad +
\bigg|\int_{|s|\le r^{1/2}/100}e^{-2\pi i s u }
\bigg(1-\frac{s^2}{r}\bigg)^{\frac{r-3}{2}}\dif s\bigg|.
\end{align*}
Observe that 
\begin{align*}
\bigg|\int_{r^{1/2}/100\le |s|\le r^{1/2}}e^{-2\pi
	is u }\bigg(1-\frac{s^2}{r}\bigg)^{\frac{r-3}{2}}\dif s\bigg|
\le 
2 r^{1/2}(1-10^{-4})^{\frac{r-3}{2}}\lesssim e^{-cr},
\end{align*}
since $1-\frac{s^2}{r}\le 1-\frac{1}{10^4}$ for $|s| \ge r^{1/2}/100$.
The proof of \eqref{eq:aux_4_1} will be completed if we show that
\begin{align}
\label{eq:aux_4_3}
\bigg|\int_{|s|\le r^{1/2}/100}e^{-2\pi i s u }
\bigg(1-\frac{s^2}{r}\bigg)^{\frac{r-3}{2}}\dif s\bigg|\lesssim  e^{-2\pi u }+e^{-cr}.
\end{align}
To prove \eqref{eq:aux_4_3} we will change the contour of
integration. Namely, let
$\gamma=:\gamma_{0}\cup\gamma_{1}\cup\gamma_2\cup\gamma_3$ be the
rectangle with the parametrization
\begin{align*}
\gamma_0(s)&:=s&&\text{for} \quad s\in[-r^{1/2}/100, r^{1/2}/100],\\
\gamma_1(s)&:=-is+\frac{r^{1/2}}{100}           &&\text{for} \quad
s\in[0,1],\\
\gamma_2(s)&:=-s-i&&\text{for} \quad s\in[-r^{1/2}/100,
r^{1/2}/100],\\
\gamma_3(s)&:=-i(1-s)-\frac{r^{1/2}}{100}           &&\text{for} \quad
s\in[0,1].
\end{align*}
The function $ z\mapsto e^{-2\pi i z u }
\big(1-\frac{z^2}{r}\big)^{\frac{r-3}{2}}$ is holomorphic in $\{z \in \mathbb{C} : |z| < \sqrt{r}/10 \}$ so the
Cauchy integral theorem implies 
\begin{align}
\label{eq:aux_4_8}
\begin{split}
\bigg|\int_{|s|\le r^{1/2}/100}e^{-2\pi i s u }
\bigg(1-\frac{s^2}{r}\bigg)^{\frac{r-3}{2}}\dif s\bigg|
&\le\sum_{j\in\{1, 3\}}\bigg|\int_{0}^1e^{-2\pi i \gamma_{j}(s) u }
\bigg(1-\frac{\gamma_j(s)^2}{r}\bigg)^{\frac{r-3}{2}}\gamma_j'(s)\dif
s\bigg|\\
& \quad 
+\bigg|\int_{|s|\le r^{1/2}/100}e^{2\pi i (s+i) u }
\bigg(1-\frac{(s+i)^2}{r}\bigg)^{\frac{r-3}{2}}\dif s\bigg|.
\end{split}
\end{align}
Observe now that
\begin{align}
\label{eq:aux_4_5}
\sum_{j\in\{1, 3\}}\bigg|\int_{0}^1e^{-2\pi i \gamma_{j}(s) u }
\bigg(1-\frac{\gamma_j(s)^2}{r}\bigg)^{\frac{r-3}{2}}\gamma_j'(s)\dif
t\bigg|\le
\sum_{j\in\{1, 3\}}\int_{0}^1
\bigg|1-\frac{\gamma_j(s)^2}{r}\bigg|^{\frac{r-3}{2}}\dif
s\lesssim e^{-cr},
\end{align}
since for $s\in[0,1]$ and sufficiently large $r\in\NN$ we have
\[
\sum_{j\in\{1,
	3\}}\bigg|1-\frac{\gamma_j(s)^2}{r}\bigg|^{\frac{r-3}{2}}\leq 2
\bigg(1-\frac{1}{10^4}+\frac{1}{r}+\frac{1}{50r^{1/2}}\bigg)^{\frac{r-3}{2}}
\le
2 \bigg(1-\frac{1}{10^5}\bigg)^{\frac{r-3}{2}}\lesssim e^{-cr}.
\]
We also have $e^{2\pi i (s+i) u }=e^{-2\pi  u }e^{2\pi i s u }$. 
Thus  it suffices to prove that for sufficiently large $r\in\NN$ we have
\begin{align}
\label{eq:aux_4_4}
\bigg|\int_{|s|\le r^{1/2}/100}e^{2\pi i s u }
\bigg(1-\frac{(s+i)^2}{r}\bigg)^{\frac{r-3}{2}}\dif
s\bigg|\lesssim 1, \qquad u \ge 0.
\end{align}
Then \eqref{eq:aux_4_8} combined with \eqref{eq:aux_4_5} and
\eqref{eq:aux_4_4} yields \eqref{eq:aux_4_3}.
We now observe that
\begin{align*}
\bigg|1-\frac{(s+i)^2}{r}\bigg|\le
\begin{cases}
1+\frac{50}{r}, & \text{ if } |s|\le 5,\\
1-\frac{s^2}{2r}, & \text{ if } 5<|s|\le \frac{r^{1/2}}{100},
\end{cases}
\end{align*}
and consequently
\begin{align*}
\bigg|\int_{|s|\le r^{1/2}/100}e^{2\pi i s u }
\bigg(1-\frac{(s+i)^2}{r}\bigg)^{\frac{r-3}{2}}\dif
s\bigg|&\lesssim 1+\int_{5\le|s|\le r^{1/2}/100}\bigg(1-\frac{s^2}{2r}\bigg)^{\frac{r-3}{2}}\dif s\\
&\lesssim 1+r^{1/2}\int_{-1}^1(1-{s^2})^{\frac{r-3}{2}}\dif s\\
& \lesssim 1+\mathcal F \mu(0)\\
&\simeq 1,
\end{align*}
where in the penultimate inequality we used \eqref{eq:aux_4_0} with $\eta=0.$
This completes the proof of \eqref{eq:aux_4_3}, hence, also the proof of \eqref{eq:aux_4_1}. Therefore, the proof of Lemma \ref{lem:aux_4} is completed.
\end{proof}

\begin{lemma}
\label{lem:aux_5}
For the normalized spherical  measure $\mu:=\mu^d$ on $S^{d-1}$
as in \eqref{eq:42} with $d\ge 2$, one has
\begin{equation}
\label{eq:aux_5_1}
|\mathcal F \mu(\xi)-1|\leq 2\pi^2\big(|\xi|/\sqrt{d}\big)^2, \qquad \xi\in \RR^d,
\end{equation}
and
\begin{equation}
\label{eq:aux_5_2}
|\mathcal F \mu(\xi)| \lesssim \big(|\xi|/\sqrt{d}\big)^{-1/2}, \qquad \xi\in \RR^d,
\end{equation}
where the implicit constant is independent of $d$ and $\xi$.
\end{lemma}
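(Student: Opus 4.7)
I would prove the two inequalities separately. For \eqref{eq:aux_5_1} the idea is a second-moment computation exploiting the reflection symmetry of $\mu$. Since $\mu$ is invariant under $\theta \mapsto -\theta$, we may replace the complex exponential by cosine and write
\[
\mathcal F\mu(\xi) - 1 = \int_{S^{d-1}}\bigl(\cos(2\pi \xi\cdot\theta) - 1\bigr)\, d\mu(\theta).
\]
Combining the elementary estimate $|\cos u - 1| \le u^2/2$ with the identity $\int_{S^{d-1}}\theta_i\theta_j\, d\mu(\theta) = \delta_{ij}/d$ (an immediate consequence of rotational invariance together with $\sum_j\theta_j^2 = 1$ on $S^{d-1}$) one obtains
\[
|\mathcal F\mu(\xi) - 1| \le 2\pi^2 \int_{S^{d-1}}(\xi\cdot\theta)^2\, d\mu(\theta) = 2\pi^2|\xi|^2/d,
\]
which is \eqref{eq:aux_5_1}.

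For \eqref{eq:aux_5_2} set $u := |\xi|/\sqrt{d}$. Since $|\mathcal F\mu(\xi)| \le 1$ always, the bound is trivial for $u \le 1$, so I may assume $u \ge 1$. The reduction carried out in the proof of Lemma \ref{lem:aux_4} (cf.\ \eqref{eq:aux_4_0}), valid for $d$ large enough, gives
\[
|\mathcal F\mu(\xi)| \simeq \bigg|\int_{-\sqrt{d}}^{\sqrt{d}} e^{-2\pi i u t}(1-t^2/d)^{(d-3)/2}\, dt\bigg|
\]
with an implicit constant that is $d$-free thanks to Stirling (the prefactor $\Gamma(d/2)/(\sqrt{\pi d}\,\Gamma((d-1)/2))$ converges to $1/\sqrt{2\pi}$). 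For $d \ge 4$ the integrand vanishes at the endpoints $t = \pm\sqrt d$, so integration by parts in $t$ yields
\[
\int_{-\sqrt{d}}^{\sqrt{d}} e^{-2\pi i u t}(1-t^2/d)^{(d-3)/2}\, dt = -\frac{d-3}{2\pi i u d}\int_{-\sqrt{d}}^{\sqrt{d}} e^{-2\pi i u t}\, t\, (1-t^2/d)^{(d-5)/2}\, dt.
\]
The substitution $\tau = 1 - t^2/d$ evaluates $\int_{-\sqrt d}^{\sqrt d} |t|(1-t^2/d)^{(d-5)/2}\, dt = 2d/(d-3)$, so the displayed integral is bounded in modulus by $1/(\pi u)$, which is $\le u^{-1/2}$ for $u \ge 1$, as required.

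The remaining small dimensions (say $d \in \{2,3\}$), for which the Stirling asymptotics used above are not sharp, can be handled directly from the Bessel-function identity \eqref{eq:FsiBes} using the classical uniform bounds $|J_0(x)|, |J_{1/2}(x)| \lesssim (1+x)^{-1/2}$; with $d$ bounded there is no dimension-dependent constant to track. The main potential obstacle is thus pure bookkeeping, namely checking that every implicit constant in the reduction and in the integration by parts — in particular the Stirling ratio and the integration-by-parts factor $1/\pi$ — is genuinely independent of $d$; this is exactly what makes the dimension-free conclusion work. It is worth noting that the argument in fact delivers the stronger decay $1/u$ at large $u$, which the lemma statement merely softens to $u^{-1/2}$.
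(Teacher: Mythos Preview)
Your proof is correct. For \eqref{eq:aux_5_1} your argument via $|\cos u-1|\le u^2/2$ and the second-moment identity is essentially the same idea as the paper's, only packaged slightly differently: the paper writes $\mathcal F\mu(\xi)$ as a product $\prod_j\cos(2\pi x_j\xi_j)$ and telescopes, whereas you keep the single cosine of the inner product. Both reach $2\pi^2|\xi|^2/d$ immediately.

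For \eqref{eq:aux_5_2} your route is genuinely different from the paper's. The paper splits according to whether $|\xi|\le d^{3/2}$ or $|\xi|>d^{3/2}$: in the first regime it invokes Lemma~\ref{lem:aux_4} (the contour-shift estimate), noting that $e^{-cd}\lesssim u^{-1/2}$ since $d\ge u$ there; in the second regime it uses the uniform Bessel bound $|J_{d/2-1}|\le 1$ together with Stirling for $\Gamma(d/2)$. Your argument bypasses Lemma~\ref{lem:aux_4} entirely and gets a clean $u^{-1}$ bound by a single integration by parts in the one-dimensional integral representation \eqref{eq:aux_4_0}, the absolute-value integral $\int_{-\sqrt d}^{\sqrt d}|t|(1-t^2/d)^{(d-5)/2}\,dt=2d/(d-3)$ being exactly what is needed to cancel the prefactor. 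This is more elementary and actually yields a stronger decay than the lemma states; the paper's approach, on the other hand, reuses machinery it has already built and makes the exponential decay from Lemma~\ref{lem:aux_4} do the work in the critical regime $u\le d$. One small remark: the Stirling ratio $\Gamma(d/2)/(\sqrt{\pi}\,\Gamma((d-1)/2))\simeq\sqrt{d}$ in \eqref{eq:aux_4_0} in fact holds with absolute constants for all $d\ge 2$, so your separate treatment of $d\in\{2,3\}$ is only needed because of the integration-by-parts step (endpoint vanishing and integrability of the derivative), not because of Stirling.
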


\begin{proof}
Using symmetry we see that
\begin{align*}
\mathcal F \mu(\xi)=\int_{S^{d-1}}e^{-2\pi i x \cdot \xi}\,d\mu(x)
=\int_{S^{d-1}}\prod_{j=1}^d\cos (2\pi x_j \xi_j)\,d\mu(x).
\end{align*}
For any sequence $(a_j: j\in\NN_d)\subseteq\CC$ and $(b_j: j\in\NN_d)\subseteq\CC$, if $\sup_{j\in\NN_d}|a_j|\le1$ and $\sup_{j\in\NN_d}|b_j|\le1$ then 
\begin{align}
\label{eq:104}
\Big|\prod_{j=1}^da_j-\prod_{j=1}^db_j\Big|\le\sum_{j=1}^d|a_j-b_j|.
\end{align}
Since $\cos(2x)=1-2\sin^2x$, 
hence, \eqref{eq:104} and inequality \eqref{eq:103}
imply, for $\xi \in \RR^d,$ that
\begin{align*}
|\mathcal F \mu(\xi)-1|
&\le 2\int_{S^{d-1}}\sum_{j=1}^d \sin^2(\pi x_j\xi_j )\,d\mu(x)\\
&\le 2\pi ^2\, \sum_{j=1}^d \xi_j^2\,\int_{S^{d-1}}x_j^2 \,d\mu(x).
\end{align*}
This proves \eqref{eq:aux_5_1}, since $\int_{S^{d-1}}x_j^2 \,d\mu(x)=d^{-1}\int_{S^{d-1}}|x|^2 \,d\mu(x)\le d^{-1}$ for any $j\in\NN_d$.
	
It remains to justify \eqref{eq:aux_5_2}. Take $\xi \in \RR^d$ and
assume first that $|\xi|\le d\sqrt{d}.$ Then $d\ge|\xi|/\sqrt{d}$ and
thus Lemma \ref{lem:aux_4} with $r=d$ easily gives
\eqref{eq:aux_5_2}. Therefore, from now on we assume that
$|\xi|> d\sqrt{d}$. Recalling
\eqref{eq:FsiBes} (with $r=d$) we write
\begin{align*}
\mathcal F \mu(\xi)=\frac{2\pi}{\sigma(S^{d-1})|\xi|^{d/2-1}}J_{d/2-1}(2\pi |\xi|),\qquad \xi\in \RR^d,
\end{align*}
with $J_{d/2-1}$ being the Bessel function \eqref{eq:Bes}. By \cite[eq. 10.14.1]{NIST} we have
\begin{align}
\label{eq:13}
|J_{d/2-1}(y)|\le 1, \qquad y\in\RR.
\end{align}
Inequality \eqref{eq:13} and the formula $\sigma(S^{d-1})=\frac{2\pi^{d/2}}{\Gamma(d/2)}$ imply the first bound below 
\begin{align*}
|\mathcal F \mu(\xi)|\le \min\bigg\{\frac{\pi \Gamma(d/2)}{\pi^{d/2}|\xi|^{d/2-1}}, \frac{C_d}{|\xi|^{(d-1)/2}}\bigg\},
\end{align*}
whereas the second estimate (with some $C_d>0$) follows from well known asymptotics of the Bessel function $|J_{d/2-1}(2\pi|\xi|)|\lesssim_d |\xi|^{-1/2}$, see e.g.\ \cite[eq.\ 10.17.3]{NIST}. If $2\le d\le 4$ we apply the second bound to deduce \eqref{eq:aux_5_2}.  If $d\ge 5$ we use the first bound and  Stirling's formula
$\Gamma(y+1)\simeq \sqrt{2\pi y}\, y^y e^{-y}$ for $y\ge 1$, which gives
\begin{align*}
|\mathcal F \mu(\xi)|\lesssim \frac{d^{d/2-1/2}}{(\pi e)^{d/2}|\xi|^{d/2-1}}
\lesssim \frac{1}{|\xi|^{1/2}}\cdot \frac{d^{d/2-1/2}}{(\pi e)^{d/2}d^{3(d-3)/4}}
\lesssim \frac{1}{|\xi|^{1/2}}\cdot\frac{ d^{7/4}}{(\pi e)^{d/2}d^{d/4}}\lesssim \frac{d^{1/4}}{|\xi|^{1/2}},
\end{align*}
whenever $|\xi|> d^{3/2}.$ This proves \eqref{eq:aux_5_2} and completes the proof of Lemma \ref{lem:aux_5}.
\end{proof}

\subsection{Fourier transform estimates for the discrete spherical measures}

The proofs of inequalities \eqref{eq:ls}, \eqref{eq:is} and \eqref{eq:ss} in Theorem \ref{thm:00} will use  Proposition \ref{prop:0}, which provides
estimates of the multiplier $\mathfrak m_t(\xi)$ at the origin. On the
other hand, they will appeal respectively  to Proposition \ref{prop:mtinfla}, Proposition \ref{prop:2},
and Proposition \ref{prop:4}, which provide estimates of the
multiplier $\mathfrak m_t(\xi)$ at infinity. All these estimates will
be described in terms of a proportionality constant
\begin{align*}
\kappa(d,\la):=\bigg(\frac{\la}{d}\bigg)^{1/2} = \frac{t}{\sqrt{d}}.
\end{align*}
This quantity is just a reparametrization of the proportionality
constant from the Euclidean ball case \cite{BMSW2}, where the radius $N$ is
replaced with $\sqrt{\la}$.

\begin{proposition}
\label{prop:0}
Let $d, \la\in\NN$ be such that $d \ge 2$ and denote $t=\sqrt{\la}.$ Then for every $\xi\in\TT^d$ we have
\begin{align}
  \label{eq:22}
  |\mathfrak m_t(\xi)-1|\le 2\pi^2\kappa(d,\la)^2\|\xi\|^2.
\end{align}
Additionally, we have
\begin{align}
\label{eq:22'}
|\mathfrak m_t(\xi)-(-1)^{\la}|\le 2\pi^2\kappa(d,\la)^2\|\xi+\vo/2\|^2.
\end{align} 
\end{proposition}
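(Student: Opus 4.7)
The plan is to exploit the symmetries \ref{sym:1} and \ref{sym:2} of the lattice sphere $S_t\cap \ZZ^d$ to reduce \eqref{eq:22} to a second moment computation for the uniform measure on $S_t\cap \ZZ^d$. First, by $\ZZ^d$-periodicity of $\mathfrak m_t$ one may assume $\xi\in Q=[-1/2,1/2)^d$, so that $\|\xi\|=|\xi|$. Using the sign-flip symmetry \ref{sym:1}, for every $x\in S_t\cap \ZZ^d$ the element $-x$ also lies in $S_t\cap \ZZ^d$, hence $\mathfrak m_t(\xi)$ is real-valued and
\begin{align*}
1-\mathfrak m_t(\xi)=\frac{1}{|S_t\cap \ZZ^d|}\sum_{x\in S_t\cap \ZZ^d}\bigl(1-\cos(2\pi\xi\cdot x)\bigr)=\frac{2}{|S_t\cap \ZZ^d|}\sum_{x\in S_t\cap \ZZ^d}\sin^2(\pi\xi\cdot x).
\end{align*}

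Using $\sin^2(\pi y)\le \pi^2 y^2$ for all $y\in\RR$, the problem reduces to bounding the second moment
\begin{align*}
S(\xi):=\frac{1}{|S_t\cap \ZZ^d|}\sum_{x\in S_t\cap \ZZ^d}(\xi\cdot x)^2=\sum_{i,j=1}^d\xi_i\xi_j\,\frac{1}{|S_t\cap \ZZ^d|}\sum_{x\in S_t\cap \ZZ^d}x_ix_j.
\end{align*}
The key observation is that the symmetries of $S_t\cap \ZZ^d$ force the covariance matrix to be a scalar multiple of the identity: by \ref{sym:1} the off-diagonal averages vanish (flip the sign of one coordinate), and by \ref{sym:2} the diagonal averages are all equal, so by summing we identify the common value as $\la/d$. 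This yields $S(\xi)=\kappa(d,\la)^2|\xi|^2$, and therefore
\begin{align*}
|1-\mathfrak m_t(\xi)|\le 2\pi^2\kappa(d,\la)^2|\xi|^2=2\pi^2\kappa(d,\la)^2\|\xi\|^2,
\end{align*}
establishing \eqref{eq:22}.

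Finally, \eqref{eq:22'} will follow from \eqref{eq:22} via the translation identity \eqref{eq:mtsym}. Writing $\xi=(\xi+\vo/2)-\vo/2$ and applying \eqref{eq:mtsym} gives $\mathfrak m_t(\xi)=(-1)^{\la}\mathfrak m_t(\xi+\vo/2-\vo)=(-1)^{\la}\mathfrak m_t(\xi+\vo/2)$ modulo $\ZZ^d$-periodicity; more cleanly, applying \eqref{eq:22} at the point $\xi+\vo/2\in\TT^d$ and multiplying by $(-1)^{\la}$ yields
\begin{align*}
|\mathfrak m_t(\xi)-(-1)^{\la}|=|(-1)^{\la}\mathfrak m_t(\xi+\vo/2)-(-1)^{\la}|=|\mathfrak m_t(\xi+\vo/2)-1|\le 2\pi^2\kappa(d,\la)^2\|\xi+\vo/2\|^2.
\end{align*}
There is essentially no obstacle in the argument; the only substantive point is recognizing that the natural pointwise estimate $(\xi\cdot x)^2\le |\xi|^2|x|^2=\la|\xi|^2$ would be too weak (it misses the crucial $1/d$ factor), and that the second-moment averaging over $S_t\cap \ZZ^d$ provided by the symmetries gains precisely this factor, producing the sharp constant $\kappa(d,\la)^2=\la/d$.
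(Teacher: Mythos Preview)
Your proof is correct. It differs from the paper's argument in the initial reduction, though both ultimately rest on the same second-moment computation.

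The paper first uses the coordinate-wise sign-flip symmetry \ref{sym:1} to write $\mathfrak m_t(\xi)$ as an average of the products $\prod_{j=1}^d\cos(2\pi x_j\xi_j)$ (this is \eqref{eq:43}), then applies the telescoping inequality \eqref{eq:104} together with $|\sin(\pi x_j\xi_j)|\le |x_j|\,|\sin(\pi\xi_j)|$ for $x_j\in\ZZ$; this reduces to the diagonal second moments $\frac{1}{|S_t\cap\ZZ^d|}\sum_x x_j^2=\kappa(d,\la)^2$ only, and the bound emerges in the intrinsically periodic form $2\kappa(d,\la)^2\sum_j\sin^2(\pi\xi_j)$ before invoking \eqref{eq:103}. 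Your route instead uses the global sign flip $x\mapsto -x$ to get $1-\mathfrak m_t(\xi)=\frac{2}{|S_t\cap\ZZ^d|}\sum_x\sin^2(\pi\,\xi\cdot x)$, bounds $\sin^2$ by its argument squared, and then needs the full covariance matrix to be scalar, so you also use \ref{sym:1} with a single coordinate flip to kill the off-diagonal terms. Your argument is a bit shorter and avoids the product formula and the integer-sine inequality; the paper's version has the advantage that \eqref{eq:43} is reused later (in the proof of Proposition \ref{prop:2}) and that the intermediate bound in terms of $\sum_j\sin^2(\pi\xi_j)$ is manifestly $\ZZ^d$-periodic without having to first restrict to $\xi\in Q$. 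The derivation of \eqref{eq:22'} from \eqref{eq:22} via \eqref{eq:mtsym} is identical in both.
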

\begin{proof}
We first prove \eqref{eq:22}. Exploiting the symmetries of
$S_{t}\cap\ZZ^d$ we have
\begin{align}
\label{eq:43}
\begin{split}
\mathfrak m_t(\xi)=\frac{1}{|S_{t}\cap\ZZ^d|}\sum_{x\in
  S_{t}\cap\ZZ^d}\prod_{j=1}^d \cos(2\pi x_j \xi_j).
\end{split}
  \end{align}
Therefore, using \eqref{eq:104} and the formula  $\cos(2x) = 1-2\sin^2x$, we obtain
\begin{align*}
  |\mathfrak m_t(\xi)-1|\le\frac{2}{|S_{t}\cap\ZZ^d|}\sum_{x\in
  S_t\cap\ZZ^d}\sum_{j=1}^{d}\sin^2(\pi x_j \xi_j).
\end{align*}
Observe  that $|\sin(\pi x y)|\le |x||\sin(\pi y)|$ for every $x\in\ZZ$ and $y\in\RR$, and also
for every $i,j\in\NN_d$ one has
\[
\sum_{x\in S_{t}\cap\ZZ^d}x_i^2=\sum_{x\in S_{t}\cap\ZZ^d}x_j^2
=
\frac{1}{d}\sum_{x\in S_{t}\cap\ZZ^d}|x|^2=\kappa(d,\la)^2
|S_{t}\cap\ZZ^d|.
\]
Thus, taking  into account these observations and changing the order of summations we obtain
\begin{align*}
|\mathfrak m_t(\xi)-1| &\le\frac{2}{|S_{t}\cap\ZZ^d|}\sum_{j=1}^{d}\sin^2(\pi \xi_j)
\frac{1}{d}\sum_{x\in S_{t}\cap\ZZ^d}|x|^2\\
&\le2\pi^2\kappa(d,\la)^2\|\xi\|^2,
\end{align*}
where in the last line we have used \eqref{eq:103}. Now \eqref{eq:22} is justified. 

It remains to prove \eqref{eq:22'}. Using   \eqref{eq:mtsym} and  \eqref{eq:22} we see that
$$|\mathfrak m_t(\xi)-(-1)^{\la}|=|\mathfrak m_t(\xi+\vo/2)-1|\le  2\pi^2\kappa(d,\la)^2\|\xi+\vo/2\|^2,$$
and the proof of \eqref{eq:22'} is completed.
\end{proof}

Proposition \ref{prop:mtinfla} together with a dimension decrease
trick will be the key ingredient in the proof of intermediate-scale inequality \eqref{eq:is} in Theorem \ref{thm:00}. It also suffices to partially treat the large-scale inequality \eqref{eq:ls} in Theorem \ref{thm:00}, but only when the supremum is restricted
to $\{t\in \mathbb D\colon C_3 d^{3/2}\le t \le d^{n}\},$ with a fixed
but arbitrary large $n\in\NN$. To treat the maximal function over the
infinite set $\mathbb D _{C_3}$ we will have to proceed
differently. This will be illustrated in the next section.
\begin{proposition}
\label{prop:mtinfla}
There exist universal constants $C,c>0$ such that for  $d\ge 16,$ $\la\ge Cd^3,$
and $\xi \in \TT^d$ one
has
\begin{equation*}
|\mathfrak m_t(\xi)|\lesssim e^{-2\pi \kappa(d,\la)\|\xi\|}+ e^{-2\pi \kappa(d,\la)\|\xi+\vo/2\|}+\la^{-2}+e^{-cd}.
\end{equation*}
\end{proposition}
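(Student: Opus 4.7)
The plan is to apply Theorem~\ref{thm:asymptotic} with $n = N+1$, which gives the decomposition $\hat{w}_\lambda(\xi) = M_1(\lambda,\xi,N+1) + E(\xi)$ with $|E(\xi)| \lesssim^d (d\lambda)^{d/4}$, and then divide through by $|S_{\sqrt{\lambda}}\cap \ZZ^d|$. For the error term, the lower bound $|S_{\sqrt{\lambda}} \cap \ZZ^d| \simeq \frac{\pi^{d/2}}{\Gamma(d/2)} \lambda^{d/2-1}$ from \eqref{wla0}, together with Stirling's formula $\Gamma(d/2)/\pi^{d/2} \lesssim^d d^{d/2}$, controls the relative error by a quantity of the shape $K^d d^{3d/4} \lambda^{1 - d/4}$. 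Since $1 - d/4 \le -3$ for $d \ge 16$, and the constraint is tightest at $d = 16$, I would choose the absolute constant $C$ in the hypothesis $\lambda \ge Cd^3$ large enough so that this relative error is bounded by $\lambda^{-2} + e^{-cd}$ uniformly in $d \ge 16$.

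For the main term, the identity $\mathcal{F}\sigma = \sigma(S^{d-1})\mathcal{F}\mu$ with $\sigma(S^{d-1}) = 2\pi^{d/2}/\Gamma(d/2)$, combined with the full asymptotic $|S_{\sqrt{\lambda}}\cap \ZZ^d| \simeq \frac{\pi^{d/2}}{\Gamma(d/2)}\lambda^{d/2-1}\mathfrak{S}_d(\lambda)$ and the uniform lower bound $\mathfrak{S}_d(\lambda) \ge 1/2$, reduces the ratio $M_1(\lambda,\xi,N+1)/|S_{\sqrt{\lambda}}\cap \ZZ^d|$ (up to absolute multiplicative constants) to
\[
\sum_{p/q \in H_N} e^{-2\pi i \lambda p/q}\, G(p/q; \vfloor{q\xi})\, \mathcal{F}\mu\bigl(\sqrt{\lambda}\,(\vfloor{q\xi}/q - \xi)\bigr).
\]
I would split this sum according to whether $q = 1$, $q = 2$, or $q \ge 3$. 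The $q = 1$ term reduces to $\mathcal{F}\mu(-\sqrt{\lambda}\xi)$, since $G(0/1; 0) = 1$ and $\vfloor{\xi} = 0$ for $\xi \in Q$; because $|\sqrt{\lambda}\xi|/\sqrt{d} = \kappa(d,\lambda)\|\xi\|$ in this setting, Lemma~\ref{lem:aux_4} gives the first desired bound $e^{-2\pi \kappa(d,\lambda)\|\xi\|} + e^{-cd}$. For $q \ge 3$, Lemma~\ref{lem:aux_1} supplies $|G(p/q;\cdot)| \le (2/q)^{d/2}$; combining with the trivial estimate $|\mathcal{F}\mu|\le 1$ and the number-theoretic count, the total contribution is at most a constant multiple of $2^{d/2} \sum_{q \ge 3} q^{1 - d/2} \lesssim (2/3)^{d/2}$ for $d \ge 16$, which fits into $e^{-cd}$.

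The $q = 2$ contribution is the only one that requires genuine separate analysis, since the Gauss sum bound $(2/q)^{d/2}$ degenerates at $q = 2$. A direct evaluation gives $G(1/2; x) = \prod_{j=1}^d \tfrac{1-(-1)^{x_j}}{2}$, so $G(1/2; \vfloor{2\xi})$ vanishes unless $\vfloor{2\xi_j}$ is odd for every $j$, i.e., $|\xi_j| \ge 1/4$ for all $j$. In that case $\vfloor{2\xi_j}/2 \in \{-1/2,1/2\}$ is the nearest half-integer to $\xi_j$, and a short case analysis yields $|\vfloor{2\xi_j}/2 - \xi_j| = \|\xi_j + 1/2\|$ coordinatewise, hence $|\vfloor{2\xi}/2 - \xi| = \|\xi + \vo/2\|$. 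Lemma~\ref{lem:aux_4} then produces the second desired bound $e^{-2\pi \kappa(d,\lambda)\|\xi + \vo/2\|} + e^{-cd}$ on this contribution. Collecting these contributions with the relative error yields the proposition. The main obstacle will be the error-term bookkeeping in the first paragraph: tracking precisely how $\Gamma(d/2)$ via Stirling combines with the $C^d (d\lambda)^{d/4}$ bound from the circle method to give the uniform $\lambda^{-2} + e^{-cd}$ decay, and in particular choosing $C$ large enough at $d = 16$ to validate the argument for all higher dimensions.
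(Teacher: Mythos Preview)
Your proposal is correct and follows essentially the same route as the paper: apply \eqref{eq:wlaxidec} from Theorem~\ref{thm:asymptotic}, normalize by $|S_{\sqrt\lambda}\cap\ZZ^d|$ using \eqref{wla0} to bound the error by $\lambda^{-2}$, then split the remaining sum over $p/q\in H_N$ into $q\in\{1,2\}$ (handled via explicit evaluation of the Gauss sums and Lemma~\ref{lem:aux_4}) and $q\ge 3$ (handled via \eqref{eq:2}). The paper carries out exactly this argument, including the identification $|\vfloor{2\xi}/2-\xi|=\|\xi+\vo/2\|$ on the support of $G(1/2;\vfloor{2\xi})$; your error-term bookkeeping concern is resolved there simply by writing $C_0^d d^{3d/4}\lambda^{1-d/4}=C_0^d d^9\lambda^{-2}(d^3/\lambda)^{d/4-3}$ and absorbing $C_0^d d^9$ into $(d^3/\lambda)^{-(d/4-3)}$ once $C$ is large enough.
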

\begin{proof}
Recalling that $N=\lfloor \sqrt{\lambda}\rfloor$, $t=\sqrt{\lambda}$ and invoking \eqref{eq:116}, \eqref{eq:wlaxidec}, and \eqref{wla0} from Theorem \ref{thm:asymptotic} (together with $\Gamma(d/2)\approx^d d^{d/2}$), we obtain
\begin{align*}
\bigg|\mathfrak m_t(\xi)-\frac{M_1(\lambda, \xi, N+1)}{|S_t\cap \ZZ^d|}\bigg|
\lesssim^d\frac{(d\lambda)^{d/4}}{|S_t\cap \ZZ^d|}
\lesssim^d \frac{d^{3d/4}}{\la^{d/4-1}}\lesssim^d\frac{d^9}{\la^2}\bigg(\frac{d^3}{\la}\bigg)^{d/4-3}\lesssim \frac{1}{\la^2},
\end{align*}
if $\la \ge C d^3$ for large enough $C>0$.

By \eqref{wla0} and  definition \eqref{eq:3} of $M_1(\lambda, \xi, N+1)$ it suffices to estimate 
\begin{align*}
R_1=R_1(d,\la,\xi):=
 \sum_{\substack{p/q \in H_N\\ q\in\{1, 2\}}} e^{-2\pi i \la p/q}G(p/q;\vfloor{q\xi})
\mathcal F \mu\big(\sqrt{\la}\big(\vfloor{q\xi}/q-\xi\big)\big),
\end{align*}
and
\begin{align*}
R_2=R_2(d,\la,\xi):=
 \sum_{\substack{p/q \in H_N\\ q\ge3}} e^{-2\pi i \la p/q}G(p/q;\vfloor{q\xi})
\mathcal F \mu\big(\sqrt{\la}\big(\vfloor{q\xi}/q-\xi\big)\big).
\end{align*}

We first handle $R_2$, which is easier. We use
$\sup_{y\in\RR^d}|\mathcal F \mu(y)|\le 1$ and \eqref{eq:2} to obtain
\begin{align*}
|R_2|&\le \sum_{q\ge 3} q\cdot (2/q)^{d/2}=2^{d/2}\sum_{q\ge 3} q^{-d/2+1}=3\cdot (2/3)^{d/2} +2^{d/2}\sum_{q\ge 4} q^{-d/2+1}\\
&\lesssim (2/3)^{d/2}+ 2^{d/2}\int_3^{\infty}y^{-d/2+1}\,dy\lesssim (2/3)^{d/2}\le e^{-d/5}.
\end{align*}
Now it remains to estimate $R_1$. We may assume that $\xi \in Q$.

Clearly, if $p/q \in H_N$ and $q\in\{1,2\}$, then
$p/q \in \{1/1,1/2\}$ since we identify $0/1$ with $1/1$. By definition \eqref{eq:def:gsum} for
$x\in \ZZ^d$ we have $G(1/1;x)=1$ and
\begin{align*}
G(1/2;x)=2^{-d}\prod_{j=1}^d\big(1+e^{\pi i (x_j+1)}\big)=\prod_{j=1}^d \ind{_{2\ZZ+1}}(x_j)
=\ind{(2\ZZ+1)^d}(x).
\end{align*}
Therefore, we have
\begin{equation}
\label{eq:mtinflaform}
\begin{split}
R_1&
=\mathcal F \mu\big(\sqrt{\la}(\vfloor{\xi}-\xi)\big)
+e^{-\pi i \la}\, \ind{(2\ZZ+1)^d}(\vfloor{2\xi})\mathcal F \mu\big(\sqrt{\la}(\vfloor{2\xi}/2-\xi)\big)\\
&=
\mathcal F \mu\big(\sqrt{\la}\xi\big)
+e^{-\pi i \la}\, \ind{(2\ZZ+1)^d}(\vfloor{2\xi})\mathcal F \mu\big(\sqrt{\la}(\vfloor{2\xi}/2-\xi)\big),
\end{split}
\end{equation}
where in the second equality we have used that $\mathcal F\mu$ is even and $\vfloor{\xi}=0$ for $\xi\in [-1/2,1/2)^d$.  
Note that
\begin{itemize}
\item[(i)] if $\xi_j\in[-1/4,1/4)$ then $2\xi_j\in [-1/2,1/2),$ and hence, $\vfloor{2\xi_j}=0$ and $\vfloor{2\xi_j}/2-\xi_j=-\xi_j,$
\item[(ii)] if $\xi_j\in[-1/2,-1/4)$ then $2\xi_j\in [-1,-1/2),$ and hence, $\vfloor{2\xi_j}=-1$ and $\vfloor{2\xi_j}/2-\xi_j=-1/2-\xi_j,$
\item[(iii)] if $\xi_j\in[1/4,1/2)$ then $2\xi_j\in [1/2,1),$ and hence, $\vfloor{2\xi_j}=1,$ and  $\vfloor{2\xi_j}/2-\xi_j=1/2-\xi_j$. 
\end{itemize}
Consequently, we see that
\begin{align*}
\ind{(2\ZZ+1)^d}(\vfloor{2\xi})=\ind{([-1/2,-1/4)\cup [1/4,1/2) )^d}(\xi),
\end{align*}
and
\begin{align*}
|\vfloor{2\xi}/2-\xi|=\|\xi+\vo/2\|\qquad\textrm{for}\qquad \vfloor{2\xi} \in (2\ZZ+1)^d.
\end{align*}
Using \eqref{eq:mtinflaform} and Lemma \ref{lem:aux_4} with $r=d$ we
obtain
\begin{align*}
|R_1|&\lesssim
\exp\Big(-2\pi \sqrt{\la}|\xi|/\sqrt{d}\Big)
+\ind{(2\ZZ+1)^d}(\vfloor{2\xi})\,
\exp\Big(-2\pi \sqrt{\la}|\vfloor{2\xi}/2-\xi|/\sqrt{d}\Big)+e^{-cd}\\
&\lesssim e^{-2 \pi \kappa(d,\la)\|\xi\|}+e^{-2 \pi \kappa(d,\la)\|\xi+\vo/2\|}+e^{-cd},
\end{align*}
where $c>0$ is the universal constant from Lemma \ref{lem:aux_4}. This
completes the proof of Proposition
\ref{prop:mtinfla}.
\end{proof}

\subsection{Symmetric diffusion semigroups}
Large part of our estimates will rely on dimension-free bounds for
symmetric diffusion semigroups. Namely, for every $t>0$ let $P_t$
be the semigroup with the multiplier
\begin{align}
\label{eq:119}
\mathfrak p_t(\xi):=e^{-t\sum_{i=1}^d\sin^2(\pi\xi_i)} \quad\text{for}\quad \xi\in\TT^d.
\end{align}
It is very well known that for every
$p\in(1, \infty)$ there is $C_p>0$ independent of $d\in\NN$ such that
for every $f\in\ell^p(\ZZ^d)$ we have
\begin{align}
\label{eq:47}
\big\|\sup_{t>0}|P_tf|\big\|_{\ell^p(\ZZ^d)}\le C_p \|f\|_{\ell^p(\ZZ^d)}.
\end{align}
We refer to 
\cite{Ste1} and to  \cite[Section 4.1]{BMSW3} for more details. 

We close this section by giving a simple application of inequality \eqref{eq:47} and Lemma \ref{lem:aux_5}. 

\begin{lemma}
\label{lem:PerSm}
Let $d\ge 2$ and for $t>0$ define
\begin{equation*}
a_t(\xi):=\mathcal F \mu(t(\xi-\vfloor{\xi})), \qquad \xi\in\RR^d.
\end{equation*}
Then, for all $f\in \ell^2(\ZZ^d)$ one has
\begin{equation}
\label{eq:PerSm}
\big\|\sup_{t\in \mathbb D} |\mathcal F^{-1}(a_t\hat{f})|\big\|_{\ell^2(\ZZ^d)}\lesssim \|f\|_{\ell^2(\ZZ^d)},
\end{equation}
where the implicit constant is independent of the dimension.
\end{lemma}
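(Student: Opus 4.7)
The plan is to compare $a_{2^n}$ with the heat-semigroup multiplier $\mathfrak p_{s_n}$ from \eqref{eq:119} for the choice $s_n:=4^n/d$. Since $\vfloor{\xi}=0$ for $\xi\in Q=[-1/2,1/2)^d$, we have $a_t(\xi)=\mathcal F\mu(t\xi)$ on the fundamental domain, and the first step is to split
\[
\sup_{n\in\ZZ}|\mathcal F^{-1}(a_{2^n}\hat f)|\le\Bigl(\sum_{n\in\ZZ}|\mathcal F^{-1}((a_{2^n}-\mathfrak p_{s_n})\hat f)|^2\Bigr)^{1/2}+\sup_{n\in\ZZ}|P_{s_n}f|.
\]
The second term is pointwise dominated by $\sup_{t>0}|P_tf|$, whose $\ell^2(\ZZ^d)$ maximal norm is controlled by an absolute constant via \eqref{eq:47}. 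For the first term, Plancherel's theorem will reduce matters to the uniform pointwise estimate
\[
\sup_{\xi\in Q}\sum_{n\in\ZZ}|a_{2^n}(\xi)-\mathfrak p_{s_n}(\xi)|^2\lesssim 1,
\]
with an implicit constant independent of $d$.

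To prove this uniform square-sum bound I would fix $\xi\in Q\setminus\{0\}$ and split the dyadic sum at the threshold integer $N_0$ determined by $2^{N_0}|\xi|\le\sqrt{d}<2^{N_0+1}|\xi|$. In the small-scale regime $n\le N_0$, the first estimate in Lemma \ref{lem:aux_5} gives $|a_{2^n}(\xi)-1|\lesssim 4^n|\xi|^2/d$, while the elementary inequalities $1-e^{-x}\le x$ and $|\sin(\pi\eta)|\le\pi|\eta|$ from \eqref{eq:103} give $|1-\mathfrak p_{s_n}(\xi)|\le s_n\sum_j\sin^2(\pi\xi_j)\lesssim 4^n|\xi|^2/d$; squaring and summing over $n\le N_0$ produces a geometric series of ratio $16^{-1}$ bounded by an absolute constant. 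In the large-scale regime $n>N_0$, the second estimate in Lemma \ref{lem:aux_5} gives $|a_{2^n}(\xi)|^2\lesssim \sqrt{d}/(2^n|\xi|)\lesssim 2^{N_0-n}$, which is geometrically summable, while the lower bound $|\sin(\pi\eta)|\ge 2|\eta|$ for $|\eta|\le 1/2$ (again from \eqref{eq:103}) yields $\sum_j\sin^2(\pi\xi_j)\ge 4|\xi|^2$ and hence $\mathfrak p_{s_n}(\xi)^2\le e^{-c\cdot 4^{n-N_0}}$ for an absolute $c>0$, which is also summable.

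The main obstacle is that this square-function bound only just closes: the decay exponent $1/2$ provided by Lemma \ref{lem:aux_5} is precisely at the edge of $\ell^2$-summability across dyadic scales, so the threshold $N_0$ and the comparison time $s_n=4^n/d$ must be carefully calibrated to the natural length scale $\sqrt{d}/|\xi|$ of the spherical multiplier; any worse exponent than $1/2$ would force a logarithmic loss in the dimension. Once the uniform bound is established, Plancherel gives
\[
\sum_{n\in\ZZ}\|\mathcal F^{-1}((a_{2^n}-\mathfrak p_{s_n})\hat f)\|_{\ell^2(\ZZ^d)}^2=\int_{\TT^d}|\hat f(\xi)|^2\sum_{n\in\ZZ}|a_{2^n}(\xi)-\mathfrak p_{s_n}(\xi)|^2\,d\xi\lesssim\|f\|_{\ell^2(\ZZ^d)}^2,
\]
and combining this with the semigroup maximal bound \eqref{eq:47} yields \eqref{eq:PerSm}.
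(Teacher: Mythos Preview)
Your proof is correct and follows essentially the same approach as the paper: both compare $a_t$ with the semigroup multiplier $\mathfrak p_{t^2/d}$, use the maximal inequality \eqref{eq:47} for the semigroup piece, and control the difference via a square function using the two estimates from Lemma~\ref{lem:aux_5}. The paper compresses the square-function step into the single bound $|\mathfrak p_{t^2/d}(\xi)-a_t(\xi)|\lesssim\min\{(\kappa(d,\la)\|\xi\|)^2,(\kappa(d,\la)\|\xi\|)^{-1/2}\}$ and leaves the dyadic summation implicit, whereas you spell out the threshold split explicitly; one small quibble is that your remark about the exponent $1/2$ being ``precisely at the edge'' of $\ell^2$-summability is not accurate---any positive decay exponent would suffice here.
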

\begin{proof}
The multiplier $a_t$ is $1$-periodic in each coordinate $\xi_j$ for
$j\in\NN_d$ thus it is well defined as a function on
$\TT^d$. Moreover, for $\xi \in Q$ we have $\xi-\vfloor{\xi}=\xi$ and
$|\xi-\vfloor{\xi}|=\|\xi\|.$ Therefore, recalling that
$\kappa(d,\la)=td^{-1/2}$ and using Lemma \ref{lem:aux_5} we obtain,
for $\xi\in\TT^d$ and $t>0$ the estimates
\begin{equation}
\label{eq:PerSm_0}
|a_t(\xi)-1|\lesssim (\kappa(d,\la) \|\xi\|)^2,
\qquad \text{ and } \qquad
|a_t(\xi)|\lesssim (\kappa(d,\la) \|\xi\|)^{-1/2}.
\end{equation}
Now, \eqref{eq:PerSm_0} implies that
\begin{equation}
\label{eq:PerSm_1}
|\mathfrak p_{t^2d^{-1}} (\xi) -a_t(\xi)|
\lesssim
\min \big\{(\kappa(d,\la)\|\xi\|)^2 ,(\kappa(d,\la)\|\xi\|)^{-1/2}\big\},\qquad \xi \in \TT^d,
\end{equation}
where $\mathfrak p_{t^2d^{-1}}=\mathfrak p_{\kappa(d,\la)^2}$ is the multiplier from \eqref{eq:119}
corresponding to the semigroup operator $P_{\kappa(d,\la)^2}.$ Hence,
using \eqref{eq:47} and \eqref{eq:PerSm_1} we obtain
\begin{align*}
\big\|\sup_{t\in \mathbb D} |\mathcal F^{-1}(a_t\hat{f})|\big\|_{\ell^2(\ZZ^d)}
\leq
\big\|\sup_{t>0} |P_tf|\big\|_{\ell^2(\ZZ^d)}
+\Big\|\Big(\sum_{t\in\mathbb D}\big|\mathcal F^{-1}\big((\mathfrak p_{t^2d^{-1}}-a_t)\hat{f}\big)\big|^2\Big)^{1/2}\Big\|_{\ell^2(\ZZ^d)}
\lesssim \|f\|_{\ell^2(\ZZ^d)}.
\end{align*}
This completes the proof of Lemma \ref{lem:PerSm}.
\end{proof}

We have also a continuous analogue of Lemma \ref{lem:PerSm}.

\begin{lemma}
\label{lem:PerSm'}
Let $d\ge 2$. Then, for all $f\in L^2(\RR^d)$ one has
\begin{equation*}
\big\|\sup_{t\in \mathbb{D}} |
\mathcal F^{-1}(\mathcal F \mu(t\cdot)\mathcal F f)|\big\|_{L^2(\RR^d)}
\lesssim \|f\|_{L^2(\RR^d)},
\end{equation*} 
where the implicit constant is independent of the dimension.
\end{lemma}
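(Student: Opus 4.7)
The plan is to repeat the argument of Lemma \ref{lem:PerSm} verbatim on $\RR^d$, substituting the classical heat semigroup for the discrete semigroup $P_t$. For $s > 0$ let $\tilde P_s$ denote the Fourier multiplier operator on $\RR^d$ with symbol $\tilde{\mathfrak p}_s(\xi) := e^{-s|\xi|^2}$. Since $\tilde P_s$ is a symmetric diffusion semigroup whose kernel is a Gaussian probability density, Stein's maximal theorem for such semigroups (\cite{Ste1}, the same reference that delivers \eqref{eq:47}) furnishes
\[
\big\|\sup_{s>0}|\tilde P_s f|\big\|_{L^2(\RR^d)}\lesssim \|f\|_{L^2(\RR^d)},
\]
with a constant independent of $d$. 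This is the continuous counterpart of \eqref{eq:47} and is essentially the only external input I need.

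Next, for each $t\in\mathbb{D}$ I would compare $\mathcal F\mu(t\xi)$ with $\tilde{\mathfrak p}_{t^2/d}(\xi)=e^{-t^2|\xi|^2/d}$. Writing $s_t:=t|\xi|/\sqrt d$, the bound \eqref{eq:aux_5_1} together with $|1-e^{-u}|\le u$ handles the small-$s_t$ regime, and \eqref{eq:aux_5_2} together with $e^{-u}\lesssim u^{-1/4}$ handles the large-$s_t$ regime. Combining these yields the pointwise difference estimate, exactly mirroring \eqref{eq:PerSm_1},
\[
|\mathcal F\mu(t\xi)-\tilde{\mathfrak p}_{t^2/d}(\xi)|\lesssim \min\{s_t^2,s_t^{-1/2}\},\qquad \xi\in\RR^d.
\]

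Finally, by the triangle inequality I would split the maximal function to be bounded into the already-controlled maximal function of $\tilde P_s$ plus a dyadic square function of $\mathcal F^{-1}((\mathcal F\mu(t\cdot)-\tilde{\mathfrak p}_{t^2/d})\mathcal Ff)$. Plancherel reduces the latter to
\[
\|f\|_{L^2(\RR^d)}\cdot\Big(\sup_{\xi\in\RR^d}\sum_{t\in\mathbb{D}}\min\{s_t^4,s_t^{-1}\}\Big)^{1/2},
\]
and since $\{s_t\}_{t\in\mathbb{D}}$ is itself a dyadic sequence the inner sum is an absolute geometric constant, independent of $\xi$ and of $d$. The only step that genuinely requires a non-trivial tool is the dimension-free maximal bound for the heat semigroup; the rest of the argument is strictly easier than in Lemma \ref{lem:PerSm} because no periodization of $\mathcal F\mu$ is needed, so the main ``obstacle'' amounts to invoking Stein's general maximal theorem.
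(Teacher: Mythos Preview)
Your proposal is correct and follows exactly the approach the paper indicates: replace the discrete semigroup $P_t$ by the heat semigroup on $\RR^d$, invoke the dimension-free maximal inequality for symmetric diffusion semigroups, and then use the estimates \eqref{eq:aux_5_1}, \eqref{eq:aux_5_2} together with Plancherel to control the dyadic square function of the difference. The paper itself only sketches this (``the only change is the use of the heat semigroup on $\RR^d$ in place of the semigroup $P_t$''), and your write-up fills in precisely those details.
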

\begin{proof}
The proof of Lemma \ref{lem:PerSm'} goes much the same way as the
proof of the previous lemma; the only change is the use of the heat semigroup on
$\RR^d$ in place of the semigroup $P_t$. We omit the details.
\end{proof}

\section{Proof of Theorem \ref{thm:00}: large-scale estimate \eqref{eq:ls}}
\label{sec:la}
The goal of this section is to prove inequality \eqref{eq:ls} in Theorem \ref{thm:00}. In order to
do this we shall need a number of estimates and expansions for the
multiplier $\hat{w}_{\la}$ given by \eqref{eq:wlamultdef}.
For $q\ge 1,$ $(p,q)=1,$ $t>0,$ and $\xi\in\TT^d$ we denote 
\begin{equation}
\label{eq:atpq}
\mathfrak a_{t,p/q}(\xi):=\frac{\la^{d/2-1}}{2 |S_{t}\cap \ZZ^d|}  e^{-2\pi i \la p/q}
G(p/q;\vfloor{q\xi})\mathcal F \sigma\big(\sqrt{\la}\big(\vfloor{q\xi}/{q}-\xi\big)\big).
\end{equation}
Next, for $1\le n\le N+1$, $t > 0$ and $\xi\in\TT^d$ we let
\begin{equation}
\label{eq:btpq}
\mathfrak b_{t,n}(\xi):=\frac{\la^{d/2-1}}{2 |S_{t}\cap \ZZ^d|}
\sum_{\substack{p/q \in H_N\\q\ge n}}\sum_{x \in \ZZ^d}\ e^{-2\pi i \la p/q}G(p/q;x)\psi(q\xi -x)
\mathcal F \sigma\big(\sqrt{\la}(x/q-\xi)\big),
\end{equation}
recall that $N = \lfloor \sqrt{\la} \rfloor$, see \eqref{10.2}.
Theorem \ref{thm:asymptotic} immediately gives a decomposition of $\mathfrak m_t$ in terms of the above multipliers. 

\begin{proposition}
\label{pro:mtxidec}
There exists a universal constant $C>0$ such that for all integers
$d\ge 16$ and $\lambda>0$ satisfying $\la \ge C d^3$, and all integers
$1\le n\le N+1$ (with $N=\lfloor \sqrt{\lambda}\rfloor$), we have the decomposition
\begin{equation}
\label{eq:mtxidec}
\mathfrak m_t(\xi)=\sum_{\substack{p/q \in H_N\\q<n}}\mathfrak a_{t,p/q}(\xi)
+\mathfrak b_{t,n}(\xi)+E_{t,n}(\xi),
\end{equation} 
where the error term $E_{t,n}(\xi)$ satisfies
\begin{equation}
\label{eq:12}
|E_{t,n}(\xi)|\lesssim^d \frac{d^{3d/4}}{\la^{d/4-1}},
\end{equation}
uniformly in $\xi \in \TT^d$, $1\le n\le N+1$, $\la \ge C d^3$ and $d\ge 16$.
\end{proposition}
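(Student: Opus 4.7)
The plan is to obtain the decomposition directly from Theorem \ref{thm:asymptotic} by dividing through by $|S_t\cap\ZZ^d|$ and then to control the resulting error via the lower bound for $|S_t\cap\ZZ^d|$ furnished by \eqref{wla0}. First, observe that the multipliers $\mathfrak{a}_{t,p/q}$ and $\mathfrak{b}_{t,n}$ defined in \eqref{eq:atpq} and \eqref{eq:btpq} are designed so that
\[
\sum_{\substack{p/q\in H_N\\ q<n}}\mathfrak{a}_{t,p/q}(\xi) \;=\; \frac{M_1(\la,\xi,n)}{|S_t\cap\ZZ^d|},
\qquad
\mathfrak{b}_{t,n}(\xi) \;=\; \frac{M_2(\la,\xi,n)}{|S_t\cap\ZZ^d|},
\]
where $M_1(\la,\xi,n)$ and $M_2(\la,\xi,n)$ are the quantities from \eqref{eq:3}--\eqref{eq:4}. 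Since $\mathfrak{m}_t(\xi)=\hat{w}_{\la}(\xi)/|S_t\cap\ZZ^d|$ by \eqref{eq:116}, this yields the identity \eqref{eq:mtxidec} with error term
\[
E_{t,n}(\xi) \;=\; \frac{\hat{w}_{\la}(\xi) - M_1(\la,\xi,n) - M_2(\la,\xi,n)}{|S_t\cap\ZZ^d|}.
\]

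Next I apply the main estimate \eqref{eq:wlaxidec'} of Theorem \ref{thm:asymptotic}, which requires $\la\ge Cd^3$ for some absolute $C>0$ and $d\ge 5$, to bound the numerator by $\lesssim^d (d\la)^{d/4}$. For the denominator, provided $C$ is enlarged if necessary so that the asymptotic \eqref{wla0} is in effect (which needs $d\ge 16$ and $\la\ge Cd^3$), we have
\[
|S_t\cap\ZZ^d| \;\gtrsim\; \frac{\pi^{d/2}}{\Gamma(d/2)}\,\la^{d/2-1}.
\]
Combining these two estimates gives
\[
|E_{t,n}(\xi)| \;\lesssim^d\; (d\la)^{d/4}\cdot \frac{\Gamma(d/2)}{\pi^{d/2}\,\la^{d/2-1}}.
\]

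The final step is to absorb the $\Gamma(d/2)$ factor using Stirling's formula $\Gamma(d/2)\simeq^d d^{d/2}$, which gives
\[
|E_{t,n}(\xi)| \;\lesssim^d\; d^{d/4}\la^{d/4}\cdot \frac{d^{d/2}}{\la^{d/2-1}} \;=\; \frac{d^{3d/4}}{\la^{d/4-1}},
\]
exactly as required in \eqref{eq:12}. The estimate is uniform in $\xi\in\TT^d$ and in $1\le n\le N+1$ because the error bound \eqref{eq:wlaxidec'} of Theorem \ref{thm:asymptotic} is itself uniform in these variables. Since the main work has already been carried out in the proof of Theorem \ref{thm:asymptotic}, there is no serious obstacle here; the only small bookkeeping point is to choose the absolute constant $C>0$ in the statement of Proposition \ref{pro:mtxidec} at least as large as the maximum of the constants appearing in Theorem \ref{thm:asymptotic} and in the lower bound \eqref{wla0}, so that both inputs apply simultaneously.
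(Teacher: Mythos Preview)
Your proof is correct and follows essentially the same approach as the paper: identify the sum of the $\mathfrak a_{t,p/q}$ and $\mathfrak b_{t,n}$ with the normalized $M_1$ and $M_2$, apply \eqref{eq:wlaxidec'}, and then use the lower bound \eqref{wla0} together with Stirling's formula (which the paper packages as a reference to \eqref{eq:8}) to obtain \eqref{eq:12}. The only cosmetic difference is that you spell out the Stirling step explicitly rather than citing \eqref{eq:8}.
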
  
\begin{proof}
The first sum in \eqref{eq:mtxidec} and $\mathfrak b_{t,n}(\xi)$ correspond  to the
normalized multipliers from \eqref{eq:3} and \eqref{eq:4}, respectively. Thus \eqref{eq:wlaxidec'} from 
Theorem \ref{thm:asymptotic} applies giving
\begin{align*}
|E_{t,n}(\xi)|\lesssim^d\frac{(d\la)^{d/4}}{|S_{t}\cap \ZZ^d|},
\end{align*}
which in turn combined with \eqref{wla0} and \eqref{eq:8} yields \eqref{eq:12}.
\end{proof}

\begin{theorem}
\label{thm:1'_s1}
There exists a universal constant $C>0$ such that for all $d\ge 16$ and all integers  $1\le p\le q$ so that $(p,q)=1$, one has
\begin{equation*}
\big\|\sup_{t\in \mathbb D_{C,\infty}} |\mathcal F^{-1}(\mathfrak a_{t,p/q}\,\hat{f})|\big\|_{\ell^2(\ZZ^d)}
\lesssim \|f\|_{\ell^2(\ZZ^d)},
\end{equation*} 
where the implicit constant is independent of $p,q,$ and the dimension $d$. 
\end{theorem}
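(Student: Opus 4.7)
The plan is to decompose $\mathfrak a_{t,p/q}$ according to the residues of $\vfloor{q\xi}$ modulo $q$, isolate the Gauss sum via Cauchy--Schwarz, and reduce each resulting piece to the dimension-free maximal inequality of Lemma \ref{lem:PerSm} through a Magyar--Stein--Wainger-style sampling transfer that is lossless at $p=2$. Choose the universal constant $C$ large enough that Theorem \ref{thm:asymptotic} applies on $\mathbb D_{C,\infty}$; then \eqref{wla0} together with $\mathcal F\sigma = \sigma(S^{d-1})\mathcal F\mu$ turns the prefactor in \eqref{eq:atpq} into a bounded scalar $c_t$ with $|c_t|\lesssim 1$ uniformly in $t,d$, while evenness of $\mathcal F\mu$ rewrites the continuous factor as $\mathcal F\mu(t(\xi-\vfloor{q\xi}/q))$.

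Partition $\TT^d$ into the cubes $V_y := y/q + [-1/(2q),1/(2q))^d$ as $y$ ranges over representatives of $(\ZZ/q\ZZ)^d$ inside $[-q/2,q/2)^d$; on $V_y$ one has $\vfloor{q\xi}=y$. Define $f_y := \mathcal F^{-1}(\mathbf{1}_{V_y}\hat f)$, so that by Plancherel $\sum_y\|f_y\|_{\ell^2(\ZZ^d)}^2 = \|f\|_{\ell^2(\ZZ^d)}^2$, and set
\[
S_t f_y(x) := \int_{V_y}\mathcal F\mu\bigl(t(\xi-y/q)\bigr)\hat f(\xi)e^{2\pi i x\cdot\xi}\,d\xi.
\]
Then $\mathcal F^{-1}(\mathfrak a_{t,p/q}\hat f) = c_t\sum_y G(p/q;y)\,S_t f_y$, and taking $\sup_t$ followed by the triangle inequality and Cauchy--Schwarz in $y$, combined with the Parseval identity $\sum_y|G(p/q;y)|^2=1$ from Lemma \ref{lem:aux_1}, yields
\[
\Big\|\sup_{t\in\mathbb D_{C,\infty}}|\mathcal F^{-1}(\mathfrak a_{t,p/q}\hat f)|\Big\|_{\ell^2(\ZZ^d)}^2 \lesssim \sum_y \Big\|\sup_{t\in\mathbb D_{C,\infty}}|S_t f_y|\Big\|_{\ell^2(\ZZ^d)}^2.
\]

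For each $y$, the change of variables $\xi=y/q+\eta$ shows that $S_t f_y(x) = e^{2\pi i x\cdot y/q}(A_t^{\RR^d}H_y)(x)$ for $x\in\ZZ^d$, where $A_t^{\RR^d}$ is the continuous spherical average on $\RR^d$ and $H_y\in L^2(\RR^d)$ is defined by $\mathcal F H_y(\eta) := \mathbf{1}_{[-1/(2q),1/(2q))^d}(\eta)\hat f(y/q+\eta)$. Since $\mathcal F H_y$ is supported inside $\TT^d=[-1/2,1/2)^d$, Poisson summation identifies $(A_t^{\RR^d}H_y)|_{\ZZ^d}$ with the image of $H_y|_{\ZZ^d}$ under the $\ell^2(\ZZ^d)$-operator whose $\TT^d$-multiplier is $\mathcal F\mu(t\xi)$; and since $\vfloor{\xi}=0$ for $\xi\in\TT^d$, that multiplier coincides precisely with the symbol $a_t$ of Lemma \ref{lem:PerSm}. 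Combining Lemma \ref{lem:PerSm} with the Whittaker--Shannon sampling identity $\|H_y|_{\ZZ^d}\|_{\ell^2(\ZZ^d)} = \|H_y\|_{L^2(\RR^d)} = \|f_y\|_{\ell^2(\ZZ^d)}$ then bounds each $y$-summand by $\|f_y\|_{\ell^2(\ZZ^d)}^2$, and summing over $y$ closes the argument. The only delicate point is the Fourier-support bookkeeping that makes this transfer between $\RR^d$ and $\ZZ^d$ lossless: it works precisely because each $\mathcal F H_y$ sits in a cube of side $1/q\le 1$, so no aliasing occurs on restriction, which is exactly where the argument is intrinsically tied to the Hilbert-space level $p=2$. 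The ultimate dimension-free ingredient, hidden inside Lemma \ref{lem:PerSm}, is the $\mathcal F\mu$-estimate of Lemma \ref{lem:aux_5}.
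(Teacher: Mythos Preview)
Your proof is correct and follows essentially the same strategy as the paper: partition $\TT^d$ according to the value of $\vfloor{q\xi}$, extract the Gauss sums via Cauchy--Schwarz using \eqref{eq:aux_6_1}, and reduce each piece to the dimension-free dyadic spherical maximal estimate of Lemma \ref{lem:PerSm}. The only cosmetic difference is that the paper stays on $\TT^d$ throughout---using the shift identity $\xi-\vfloor{q\xi}/q=(\xi-w/q)-\vfloor{\xi-w/q}$ to recognize the multiplier as $a_t(\xi-w/q)$ and apply Lemma \ref{lem:PerSm} directly---whereas you take an equivalent detour through $L^2(\RR^d)$ via the sampling identity before landing at the same lemma.
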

\begin{proof}
Fix $q\in\NN$ and for $w\in \NN_q^d$ we define the sets $T_w:=\{\xi\in \TT^d \colon \vfloor{q\xi} \equiv w \;(\bmod\; q)\}$.
Then any $f\in \ell^2(\ZZ^d)$ admits the decomposition
\begin{equation}
\label{eq:fTw}
f=\sum_{w\in \NN_q^d} f_w, \qquad \text{ with } \qquad \hat{f}_w:=\hat{f}\cdot\ind{T_w}.\end{equation}
For further use we note that the functions $\hat{f}_w$ have pairwise disjoint supports.
For $w\in \NN_q^d$ one has
\begin{equation}
\label{eq:thm:1'_s1_1a}
\xi-\vfloor{q\xi}/q=\xi-w/q-\vfloor{\xi -w/q},\qquad \xi \in T_w.
\end{equation}
Indeed, writing $\vfloor{q\xi}=w+kq$ for some $k\in \ZZ^d,$ we see
that
\begin{equation}
\label{eq:thm:1'_s1_2}
\xi-w/q=k+\xi-\vfloor{q\xi}/q.
\end{equation}
We obtain $\vfloor{\xi-w/q}=k$, since
\begin{align*}
\xi-\vfloor{q\xi}/q=(q\xi-\vfloor{q\xi})/q\in \bigg[\frac{-1}{2q},\frac{1}{2q}\bigg)^d
\subseteq Q.
\end{align*}
Thus,  \eqref{eq:thm:1'_s1_2}
justifies \eqref{eq:thm:1'_s1_1a}. By \eqref{wla0} from Theorem \ref{thm:asymptotic}
(with $C>0$ large enough) we have 
\begin{align*}
\frac{\la^{d/2-1}}{ |S_{t}\cap \ZZ^d|}\simeq \frac{1}{\sigma(S)},\qquad t\in \mathbb D_{C,\infty}.
\end{align*}
By definition \eqref{eq:atpq} and decomposition \eqref{eq:fTw}  we estimate
\begin{align*}
\big\|\sup_{t\in \mathbb D_{C,\infty}} |\mathcal F^{-1}(\mathfrak a_{t,p/q}\,\hat{f})|\big\|_{\ell^2(\ZZ^d)}
\lesssim \sum_{w\in \NN_q^d}|G(p/q;w)| \big\|\sup_{t\in \mathbb D_{C,\infty}}
\big|\mathcal F^{-1}\big(\mathcal F \mu\big(t\big(\vfloor{q\xi}/{q}-\xi\big)\big)\hat{f}_w\big)\big|\big\|_{\ell^2(\ZZ^d)}.
\end{align*}
Now by \eqref{eq:thm:1'_s1_1a} and  Lemma \ref{lem:PerSm} we obtain
\begin{align*}
\big\|\sup_{t\in \mathbb D_{C,\infty}}\big|\mathcal F^{-1}\big(\mathcal F \mu\big(t\big(\vfloor{q\xi}/{q}&-\xi\big)\big)\hat{f}_w\big)\big|\big\|_{\ell^2(\ZZ^d)}\\
&=
\big\|\sup_{t\in \mathbb D_{C,\infty}}\big|\mathcal F^{-1}\big(\mathcal F \mu\big(t\big(\xi-w/q-\vfloor{\xi -w/q}\big)\big)\hat{f}_w\big)\big|\big\|_{\ell^2(\ZZ^d)}\\
&=\big\|\sup_{t\in \mathbb D_{C,\infty}}\big|\mathcal F^{-1}\big(a_t(\xi) \hat{f}_w(\xi+w/q)\big)\big|\big\|_{\ell^2(\ZZ^d)}\\
&\lesssim \|\hat{f}_w\|_{L^2(\TT^d)}.
\end{align*}
Finally by the Cauchy--Schwarz inequality together with \eqref{eq:aux_6_1},  \eqref{eq:fTw} and Plancherel's theorem we conclude
\begin{align*}
\big\|\sup_{t\in \mathbb D_{C,\infty}} |\mathcal F^{-1}(\mathfrak a_{t,p/q}\,\hat{f})|\big\|_{\ell^2(\ZZ^d)}
\lesssim \Big(\sum_{w\in \NN_q^d}|G(p/q;w)|^2\Big)^{1/2}\Big(\sum_{w\in \NN_q^d}\|\hat{f}_w\|_{L^2(\TT^d)}^2\Big)^{1/2}
= \|f\|_{\ell^2(\ZZ^d)}.
\end{align*}
The proof of Theorem \ref{thm:1'_s1} is thus completed. 
\end{proof}

We now pass to our second main ingredient necessary to complete the proof of inequality \eqref{eq:ls} in Theorem
\ref{thm:00}. 
\begin{theorem}
\label{thm:1'_s2}
There exist universal constants $C>0$ and
$n_0\in \NN$ such that for all $d\ge 16$ one has
\begin{equation*}
\big\|\sup_{t\in \mathbb D_{C,\infty}} |\mathcal F^{-1}(\mathfrak b_{t,n_0}\,\hat{f})|\big\|_{\ell^2(\ZZ^d)}
\lesssim \|f\|_{\ell^2(\ZZ^d)},
\end{equation*} 
where the implicit constant is independent of the dimension. 
\end{theorem}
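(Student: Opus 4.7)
The plan is to apply the Magyar--Stein--Wainger sampling principle to each $(p,q)$-block of $\mathfrak b_{t,n_0}$ and to offset the dimension-dependent loss $C^d$ it produces by the decay $(2/q)^{d/2}$ of the Gauss sums from \eqref{eq:2}. The threshold $n_0$ is chosen large enough for this offset to succeed.

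First I would split
\begin{equation*}
\mathfrak b_{t,n_0}(\xi)=\sum_{q=n_0}^{N}\sum_{\substack{1\le p\le q\\(p,q)=1}}e^{-2\pi i \la p/q}\,\mathfrak c_{t,p,q}(\xi),
\end{equation*}
where, after replacing the prefactor $\la^{d/2-1}/(2|S_t\cap\ZZ^d|)$ by $1/\sigma(S^{d-1})$ at the cost of a bounded factor (via \eqref{wla0}), each summand takes the form
\begin{equation*}
\mathfrak c_{t,p,q}(\xi)=\sum_{x\in\ZZ^d}G(p/q;x)\,\Phi_{t,q}(\xi-x/q),\qquad \Phi_{t,q}(\eta):=\psi(q\eta)\,\mathcal F\mu(\sqrt\la\,\eta).
\end{equation*}
Since $\psi$ is supported in $(-1/4,1/4)^d$, the function $\Phi_{t,q}$ lives in $[-1/(4q),1/(4q)]^d\subset Q$, and reducing $x$ modulo $q\ZZ^d$ via the $q$-periodicity of $G(p/q;\cdot)$ presents $\mathfrak c_{t,p,q}$ exactly as a scale-$1/q$ periodization of $\Phi_{t,q}$ with coefficients $(G(p/q;a))_{a\in\NN_q^d}$---the input required by the sampling principle.

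I would then invoke \cite[Corollary~2.1]{MSW} on the vector-valued target $\ell^\infty(\mathbb D_{C,\infty})$ that encodes the $t$-supremum. Since this target is not Hilbert, the sampling step loses an absolute factor $C_0^d$ for some $C_0>1$, and combined with the pointwise bound $|G(p/q;x)|\le(2/q)^{d/2}$ from \eqref{eq:2} it yields
\begin{equation*}
\Big\|\sup_{t\in\mathbb D_{C,\infty}}\bigl|\mathcal F^{-1}(\mathfrak c_{t,p,q}\hat f)\bigr|\Big\|_{\ell^2(\ZZ^d)}\lesssim C_0^d\,(2/q)^{d/2}\,\mathcal N(q)\,\|f\|_{\ell^2(\ZZ^d)},
\end{equation*}
where $\mathcal N(q)$ is the $L^2(\RR^d)$-norm of the continuous dyadic maximal operator with multiplier $\Phi_{t,q}$. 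To see that $\mathcal N(q)\lesssim 1$ uniformly in $q$ and $d$, I factor $\Phi_{t,q}$ as $\psi(q\cdot)\,\mathcal F\mu(\sqrt\la\,\cdot)$ and absorb $\psi(q\cdot)$ into the function by setting $g:=\mathcal F^{-1}(\psi(q\cdot)\mathcal F F)$, which satisfies $\|g\|_{L^2}\le\|F\|_{L^2}$; then Lemma \ref{lem:PerSm'} applied to $g$ produces the desired bound.

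Finally, since there are at most $q$ admissible $p$'s, the triangle inequality gives
\begin{equation*}
\Big\|\sup_{t\in\mathbb D_{C,\infty}}|\mathcal F^{-1}(\mathfrak b_{t,n_0}\hat f)|\Big\|_{\ell^2(\ZZ^d)}\lesssim \sum_{q\ge n_0}q\cdot C_0^d\,(2/q)^{d/2}\,\|f\|_{\ell^2(\ZZ^d)}=(2C_0^2)^{d/2}\sum_{q\ge n_0}q^{1-d/2}\,\|f\|_{\ell^2(\ZZ^d)},
\end{equation*}
and for $d\ge 16$ the tail is $\lesssim n_0^{2-d/2}/d$, so the whole expression is bounded by $d^{-1}n_0^{2}(2C_0^{2}/n_0)^{d/2}\|f\|_{\ell^2(\ZZ^d)}$. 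Choosing any integer $n_0>2C_0^{2}$ renders this uniformly bounded in $d\ge 16$, completing the proof. The single genuine obstacle---and the reason the parameter $n$ was deliberately kept free in the decomposition \eqref{eq:mtxidec}---is the $C^d$ loss from the sampling principle; pushing $n_0$ above $2C_0^{2}$ is precisely the mechanism that lets the Gauss-sum decay $(2/q)^{d/2}$ devour this loss.
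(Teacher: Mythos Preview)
Your overall strategy coincides with the paper's: split $\mathfrak b_{t,n_0}$ into $(p,q)$-blocks, use the Gauss-sum decay $(2/q)^{d/2}$ to absorb the $C^d$ loss from the Magyar--Stein--Wainger sampling principle, and choose $n_0$ large enough. The continuous bound $\mathcal N(q)\lesssim 1$ via Lemma~\ref{lem:PerSm'} and the final summation are both correct.

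There is, however, a genuine gap in the step where you invoke \cite[Corollary~2.1]{MSW}. That result (stated here as Proposition~\ref{prop:MSW1}) compares $\ell^p$ and $L^p$ norms for a \emph{simple} periodization $m^q_{\rm per}(\xi)=\sum_{x\in\ZZ^d}m(\xi-x/q)$, with the \emph{same} $m$ at every lattice shift. Your block $\mathfrak c_{t,p,q}(\xi)=\sum_{x\in\ZZ^d}G(p/q;x)\,\Phi_{t,q}(\xi-x/q)$ carries the $x$-dependent coefficients $G(p/q;x)$ and is therefore \emph{not} of this form; the sampling principle as stated does not accept ``periodization with coefficients'' as input. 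Separately, the Plancherel-based transference \eqref{eq:prop:MSW2} \emph{does} allow $q$-periodic coefficients $\gamma_x$, but it is scalar-valued and gives no maximal (vector-valued) estimate; you cannot invoke it to control a supremum in $t$.

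The paper resolves exactly this point by a factorization that you are missing. One introduces a fatter cutoff $\psi'\in C_c^\infty(Q)$ with $\psi\psi'=\psi$ and writes the $(p,q)$-block as a composition $U^q\circ V^{p/q}$, where
\[
V^{p/q}f=\mathcal F^{-1}\Big(\sum_{x\in\ZZ^d}G(p/q;x)\,\psi'(q\xi-x)\,\hat f\Big),\qquad
U^qg=\Big(\mathcal F^{-1}\Big(\sum_{x\in\ZZ^d}\psi(q\xi-x)\,\mathcal F\mu\big(\sqrt\la(\xi-x/q)\big)\,\hat g\Big)\Big)_{\la}.
\]
Now $U^q$ is a simple $q$-periodization of the $B$-valued multiplier $(\psi(q\cdot)\mathcal F\mu(\sqrt\la\,\cdot))_{\la}$, so Proposition~\ref{prop:MSW1} together with Lemma~\ref{lem:PerSm'} yields $\|U^q\|_{\ell^2\to\ell^2(\ZZ^d;B)}\le D_1^d$; and $V^{p/q}$ is $t$-independent, so the scalar estimate \eqref{eq:prop:MSW2} applies and gives $\|V^{p/q}\|_{\ell^2\to\ell^2}\le D_2^d q^{-d/2}$ (here the constant $A\lesssim^d 1$ comes from the Fourier coefficients of $\psi'$, which is where a second exponential factor enters). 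Composing and summing over $(p,q)$ yields your final display with $(D_1D_2)^d$ in place of $C_0^d$, after which choosing $n_0>(D_1D_2)^{10}$ finishes the proof exactly as you wrote. In short: your plan is right, but you must insert this $\psi'\psi=\psi$ factorization to separate the vector-valued sampling step from the coefficient-carrying step before either transference result becomes applicable.
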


The proof of Theorem \ref{thm:1'_s2} will take up the bulk of this
section.  It will be based on a reduction to the sampling principle of
Magyar, Stein and Wainger \cite{MSW}. We now have to set the necessary notation. 
Assume that $q\in \NN$ and let $B$ be a finite dimensional
Banach space. Let $m\in L^{\infty}(\RR^d; B)$ be a function supported
in $q^{-1} Q$ and define
\begin{align}
\label{eq:18}
m_{\rm per}^q(\xi):=\sum_{x \in \ZZ^d}m(\xi-x/q),\qquad \xi \in \RR^d.
\end{align}
Then $m_{\rm per}^q$ is $1/q$ periodic in each coordinate; in
particular it may be regarded as a function on $\TT^d.$ Consider the
multiplier operators $T$ and $T_{\rm dis}^q$ given by
\begin{equation}
\label{eq:multT}
\mathcal F(T f) (\xi) = m(\xi) \mathcal F(f)(\xi),\qquad f\in L^2(\RR^d),\quad \xi \in \RR^d\end{equation}
and
\begin{equation}
\label{eq:multTdis}
\widehat{T_{\rm dis}^q f} (\xi) =m_{\rm per}^q(\xi)  \hat{f}(\xi),\qquad f\in \ell^2(\ZZ^d),\quad \xi\in \TT^d.
\end{equation}

We shall need two transference results from \cite{MSW}.
\begin{proposition}[{\cite[Corollary 2.1]{MSW} with $B_1=\CC$ and
$B_2=B$}]
\label{prop:MSW1}
Let $1\le p\le \infty$ and $q\in \NN.$ Then, there exists a
universal constant $C>0$ such that
\begin{equation}
\label{eq:prop:MSW1}
\|T_{\rm dis}^q \|_{\ell^p(\ZZ^d)\to \ell^p(\ZZ^d; B) }\leq C^d \|T \|_{L^p(\RR^d)\to L^p(\RR^d; B)}.
\end{equation}
The constant $C$ is independent of $d, p,q$ and the finite dimensional Banach space $B$.
\end{proposition}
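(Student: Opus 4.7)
The plan is to follow the classical lifting-sampling scheme, preceded by a reduction to the case $q=1$ that exploits the $(1/q)$-periodicity of $m_{\rm per}^q$. For this reduction, decompose $\ell^p(\ZZ^d) = \bigoplus_{r \in \{0,\ldots,q-1\}^d} V_r$, where $V_r = \{f\colon \supp f \subseteq r + q\ZZ^d\}$. A direct Fourier computation using that $m_{\rm per}^q$ has period $1/q$ in each coordinate shows that $T_{\rm dis}^q$ preserves every $V_r$; since the $V_r$ have pairwise disjoint supports, $\|f\|_{\ell^p}^p = \sum_r \|f_r\|_{\ell^p}^p$ (and analogously in the $B$-valued target), so $\|T_{\rm dis}^q\|_{\ell^p \to \ell^p(B)} \le \max_r \|T_{\rm dis}^q|_{V_r}\|$, and the translation-invariance of $T_{\rm dis}^q$ makes all these restricted norms equal. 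Via the canonical isomorphism $V_0 = \ell^p(q\ZZ^d) \cong \ell^p(\ZZ^d)$ given by $h(k):=g(qk)$, the operator $T_{\rm dis}^q|_{V_0}$ becomes a standard multiplier operator on $\ell^p(\ZZ^d)$ whose multiplier $\tilde m(\eta) := m(\eta/q)$ is supported in $Q$; a parallel dilation on the continuous side shows that the associated operator with the same multiplier $\tilde m$ on $L^p(\RR^d)$ has operator norm equal to $\|T\|_{L^p(\RR^d) \to L^p(\RR^d;B)}$. Thus matters reduce to the case $q = 1$.

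For the $q=1$ case, fix a Schwartz function $\eta_0 \in \mathcal{S}(\RR)$ with $\hat{\eta}_0 \equiv 1$ on $[-1/2,1/2]$ and $\hat{\eta}_0$ compactly supported, and form the tensor-product bump $\eta(x) := \prod_{j=1}^d \eta_0(x_j)$, so that $\hat{\eta} \equiv 1$ on $Q$. For $f \in \ell^p(\ZZ^d)$ define the lift $F(x) := \sum_{k \in \ZZ^d} f(k)\,\eta(x-k)$; then $\hat F(\xi) = \hat{\eta}(\xi) \hat f(\xi)$ (with $\hat f$ the periodic Fourier series of $f$). Since $\hat{\eta} \equiv 1$ on $\supp m \subseteq Q$, one gets $\widehat{TF}(\xi) = m(\xi) \hat f(\xi)$, so $TF$ is Fourier-supported in $Q$, and direct sampling at $n \in \ZZ^d$ (together with $m_{\rm per}^1 = m$ on $Q$) yields $TF(n) = T_{\rm dis}^1 f(n)$. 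The $q=1$ case then follows by chaining two standard estimates, each yielding only a $C^d$ loss: a lifting bound $\|F\|_{L^p(\RR^d)} \lesssim C^d \|f\|_{\ell^p(\ZZ^d)}$ (via Minkowski--H\"older, using that $\|\eta\|_{L^1(\RR^d)}$ and $\sup_x \sum_k |\eta(x-k)|$ are both bounded by $C^d$ thanks to the tensor-product structure), and a vector-valued Plancherel--P\'olya sampling inequality $\|G|_{\ZZ^d}\|_{\ell^p(\ZZ^d;B)} \lesssim C^d \|G\|_{L^p(\RR^d;B)}$ for $G$ Fourier-supported in $Q$, proved by convolving $G$ with a tensor-product Schwartz reproducing kernel and applying H\"older. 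Stringing these together, $\|T_{\rm dis}^1 f\|_{\ell^p(B)} \le C^d \|TF\|_{L^p(B)} \le C^d \|T\|\,\|F\|_{L^p(\RR^d)} \le C^{2d} \|T\|\,\|f\|_{\ell^p(\ZZ^d)}$.

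The main technical care needed is to ensure that the dimensional loss in both of these estimates remains genuinely exponential in $d$ and independent of $B$, $p$, $q$ --- anything slower than $C^d$ would be fatal for the way the proposition is used later in the paper. The tensor-product structure of the Schwartz bumps is what delivers this: the relevant norms all factor across dimensions into one-dimensional pieces, each contributing at most a universal constant. The extension from the scalar to the $B$-valued Plancherel--P\'olya inequality is painless: one only needs scalar integration of $\|G(\cdot)\|_B$ against a scalar (tensor-product) reproducing kernel, so the passage from scalar to vector-valued target costs nothing.
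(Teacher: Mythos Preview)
Your proposal is correct and follows the classical Magyar--Stein--Wainger scheme (reduction to $q=1$ via the coset decomposition induced by $1/q$-periodicity, then lift--apply--sample with tensor-product Schwartz bumps), which is precisely the approach the paper invokes: the paper does not give its own proof of this proposition but simply cites \cite[Corollary~2.1]{MSW} and remarks that the $C^d$ constant follows from a careful inspection of that argument. You have carried out that careful inspection explicitly, and your emphasis on the tensor-product structure as the source of the exponential-in-$d$ (rather than worse) dependence is exactly the point.
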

\begin{remark*}
The estimate \eqref{eq:prop:MSW1} is not explicitly stated in
\cite{MSW}. However, it can be easily deduced by a careful inspection
of the proof of \cite[Corollary 2.1]{MSW}.  We now present an argument
which absorbs the exponential growth in \eqref{eq:prop:MSW1} by
choosing a large integer $n_0\in\NN$ in Theorem \ref{thm:1'_s2}.
\end{remark*}
Further we consider the multiplier
\begin{equation*}
m(\xi)=\sum_{x\in \ZZ^d} \gamma_{x} \Phi(\xi-x/q),\qquad \xi \in \RR^d,
\end{equation*}
satisfying 
\begin{enumerate}[label*={\alph*})]
\item $\Phi \in C_c^{\infty}(q^{-1}Q)$ and as a function on $Q$ it has
the Fourier expansion
\begin{align*}
\Phi(\xi)=\sum_{x \in \ZZ^d}\varphi_{x}e^{-2\pi i x \cdot \xi},\qquad \xi\in Q,
\end{align*}
with $\sum_{x \in \ZZ^d}|\varphi_x|\le A.$
\item $(\gamma_{x})_{x\in\ZZ^d}$ is a $q \ZZ^d$ periodic sequence; i.e.,
$\gamma_{x}=\gamma_{x'}$ if $x - x ' \in q\ZZ^d.$
\end{enumerate}
Using a) and b) and Plancherel's theorem we conclude that
\begin{equation}
\label{eq:prop:MSW2}
\big\|\mathcal F^{-1}(m\hat{f})\big\|_{\ell^2(\ZZ^d)}\leq A \sup_{x \in \ZZ^d}|\gamma_{x}|\|f\|_{\ell^2(\ZZ^d)}.
\end{equation}  

We now are  ready to prove Theorem \ref{thm:1'_s2}.
\begin{proof}[Proof of Theorem \ref{thm:1'_s2}]
By the monotone convergence theorem it suffices to show that for $C>0$
and $n_0\in \NN$ large enough the uniform  estimate
\begin{equation}
\label{eq:thm:1'_s2_1'}
\sup_{J\in\NN}\big\|\sup_{t\in \mathbb D_{C,\infty}\cap [0,J]} |\mathcal F^{-1}(\mathfrak b_{t,n_0}\,\hat{f})|\big\|_{\ell^2(\ZZ^d)}
\lesssim \|f\|_{\ell^2(\ZZ^d)},
\end{equation} 
holds true. From now on  $J\in \NN$ is fixed. We also fix a function $\varphi' $ in
$C_c^{\infty}((-1/2,1/2))$ which is equal to $1$ on $[-1/4,1/4],$ and
satisfies $\|\varphi'\|_{L^{\infty}(\RR)}\le 1;$ and
denote
\begin{align*}
\psi'(\xi):=\prod_{j=1}^d\varphi'(\xi_j),\qquad \xi\in \RR^d.
\end{align*}
Then clearly, $\psi \psi'=\psi,$ where $\psi$ is the function defined
by \eqref{eq:psidef}. We  set
\begin{align*}
H_{\infty}:=\bigcup_{N\in\NN} H_N=\{p/q\in\QQ\colon 1\le p\le q,\, (p,q)=1\},
\end{align*}
recall that we identify $0/1$ with $1/1$.
For every $p/q\in H_{\infty}$ we also define two operators 
$U^q\colon \ell^2(\ZZ^d)\to \ell^2(\ZZ^d; B)$, where $B:=\ell^{\infty}(\NN_{J^2} \cap \mathbb D^2)$,   and
$V^{p/q}\colon \ell^2(\ZZ^d)\to \ell^2(\ZZ^d)$ by setting
\begin{equation}
\label{eq:16}
U^q f:=\Big(\mathcal F^{-1}\Big(\sum_{x \in \ZZ^d}\psi_q(\xi -x/q)
\mathcal F \mu\big(\sqrt{\la}(\xi-x/q)\big)\hat{f}\Big)\Big)_{\la \in \NN_{J^2 } \cap \mathbb D^2 },\qquad f\in \ell^2(\ZZ^d),
\end{equation}
where 
\[
\psi_q(\xi)=\psi(q\xi), \qquad 
\psi_q'(\xi)=\psi'(q\xi), \qquad \xi\in\RR^d,
\]
and
\begin{align*}
V^{p/q} f:=	\mathcal F^{-1}\Big(\sum_{x \in \ZZ^d}\,G(p/q;x)\psi'_q(\xi -x/q)
\hat{f}\Big),\qquad f\in \ell^2(\ZZ^d).
\end{align*}
Since   $t\in \mathbb D_{C,\infty}$ we have $t\ge Cd^{3/2}$
so that $N=\lfloor \sqrt{\lambda}\rfloor\ge Cd^{3/2}-1$. Recalling \eqref{eq:btpq}, using
\eqref{wla0} and the fact that $\psi_q \psi_q' =\psi_q$  we see that 
\begin{equation}
\label{eq:17}
\big\|\sup_{t\in \mathbb D_{C,\infty}\cap [0,J]} |\mathcal F^{-1}(\mathfrak b_{t,n_0}\,\hat{f})|\big\|_{\ell^2(\ZZ^d)}
\lesssim \sum_{\substack{p/q \in H_{\infty}\\q\ge n_0}} \|U^q V^{p/q} f\|_{\ell^2(\ZZ^d; B)},
\end{equation}
uniformly in $1 \le n_0\le   Cd^{3/2}$. We specify $n_0$ later.
	
Now, observe that the proof of \eqref{eq:thm:1'_s2_1'} will be
completed once we show that there are universal constants $D_1, D_2>0$ such that
\begin{align}
\label{eq:14}\|U^q f\|_{\ell^2(\ZZ^d; B)}&\leq D_1^d\, \|f\|_{\ell^2(\ZZ^d)},\\
\label{eq:15}\|V^{p/q}f\|_{\ell^2(\ZZ^d)}&\leq D_2^d\, q^{-d/2}\|f\|_{\ell^2(\ZZ^d)},
\end{align}
uniformly in $p/q \in H_{\infty}$ and $d \ge 16$. 
Indeed, using \eqref{eq:17} and 
assuming momentarily that  \eqref{eq:14} and \eqref{eq:15} hold and taking $n_0:= \lfloor(D_1 D_2)^{10}\rfloor+1$ we obtain 
\begin{align*}
\big\|\sup_{t\in \mathbb D_{C,\infty}\cap [0,J]} |\mathcal F^{-1}(\mathfrak b_{t,n_0}\,\hat{f})|\big\|_{\ell^2(\ZZ^d)}
& \lesssim 
\|f\|_{\ell^2(\ZZ^d)} \sum_{q\ge n_0}(D_1D_2)^dq^{-d/2+1} \\
& \lesssim
\|f\|_{\ell^2(\ZZ^d)} \sum_{q\ge n_0} q^{-2d/5+1}\bigg(\frac{(D_1 D_2)^{10}}{q}\bigg)^{d/10} 
\lesssim 
\|f\|_{\ell^2(\ZZ^d)},
\end{align*}
as long as $d\ge 16$. 
Therefore we reduced  the proof of Theorem
\ref{thm:1'_s2} to showing \eqref{eq:14} and \eqref{eq:15}.

\paragraph{\bf Proof of estimate \eqref{eq:14}}
Considering $(m_{\rm per}^q(\xi))_{\la \in \NN_{J^2} \cap \mathbb D^2}\in B=\ell^{\infty}(\NN_{J^2 } \cap \mathbb D^2 )$ from \eqref{eq:18} with
\begin{align*}
m(\xi):=\big(\psi_q(\xi)\mathcal F \mu\big(\sqrt{\la}\xi\big)\big)_{\la \in \NN_{J^2 } \cap \mathbb D^2 },\qquad \xi \in \RR^d
\end{align*}
it is easy to see, 
from definition \eqref{eq:16}, that $U^q$ is equal to
$T_{\rm dis}^q$ defined in 
\eqref{eq:multTdis}. Since $\supp m  \subseteq q^{-1}Q$ and $m$ belongs to
$L^{\infty}(\RR^d; B)$ and satisfies
$\|m\|_{L^{\infty}_{B}(\RR^d)}\leq 1$, hence,
Proposition \ref{prop:MSW1} applies and  we obtain
\begin{align*}
\|U^q \|_{\ell^2(\ZZ^d)\to \ell^2(\ZZ^d; B)} \lesssim^d
\|T \|_{L^2(\RR^d)\to L^2(\RR^d; B)},
\end{align*}
where $T$ is defined by \eqref{eq:multT}. The latter operator norm, 
using Lemma \ref{lem:PerSm'} (see also \cite{SteinMax,StStr}), is bounded and for $f\in L^2(\RR^d)$ we have
\begin{align*}
\|T f\|_{L^2(\RR^d; B)}
\lesssim  
\| \mathcal F^{-1} ( \psi_q
\mathcal F f ) \|_{L^2(\RR^d)}
\le 
\| \psi_q \|_{L^\infty(\RR^d)} \|f\|_{L^2(\RR^d)}
\leq \|f\|_{L^2(\RR^d)}. 
\end{align*}
Thus \eqref{eq:14} follows with a universal constant $D_1>0$.

\paragraph{\bf Proof of estimate \eqref{eq:15}}
Here we apply inequality \eqref{eq:prop:MSW2} with $\gamma_x:=G(p/q;x)$
and $\Phi:=\psi'_q$. Note that $\Phi$ satisfies condition a); here we
shall use the fact that $\psi'\in C_c^{\infty}(Q).$ Indeed,
$\Phi \in C_c^{\infty}(q^{-1}Q)$ and by Fourier inversion theorem, we
have
$\varphi_x=\int_Q \psi'(q\xi) e^{2\pi i x \cdot \xi}\,d\xi=q^{-d}\mathcal F (\psi')(x/q)$.
Since $\mathcal F(\psi')$ is a tensor product of Schwartz functions,
we thus obtain $\sum_{x \in \ZZ^d}|\varphi_x|\lesssim^d1$.  Therefore,
$\Phi$ satisfies condition a) of inequality \eqref{eq:prop:MSW2} with a
constant $A^d$ in place of $A$, where $A$ is a universal constant depending
only on $\psi'$. Moreover, $G(p/q;x)$ is $q\ZZ^d$ periodic, and
$\sup_{x \in \ZZ^d}|G(p/q;x)|\le (2/q)^{d/2},$ by \eqref{eq:2}. Therefore, an application of inequality \eqref{eq:prop:MSW2} is justified and leads to \eqref{eq:15}
with $D_2=\sqrt{2}A.$
\end{proof}

\subsection{All together: proof of inequality \eqref{eq:ls} in Theorem \ref{thm:00}} Let $C_3>0$
be a universal constant so large that the conclusions of Proposition \ref{pro:mtxidec},  Theorem
\ref{thm:1'_s1}, and Theorem \ref{thm:1'_s2} are
satisfied. Take $n_0\in\NN$ large enough so that the
conclusion of Theorem \ref{thm:1'_s2} holds. Then, using Proposition
\ref{pro:mtxidec}, together with Theorem \ref{thm:1'_s1}, and Theorem
\ref{thm:1'_s2}, and Plancherel's theorem we see that

\begin{equation}
\label{eq:alltogether:thm:1'}
\begin{split}
\big\|\sup_{t\in \mathbb D_{C_3,\infty}}|\mathcal A^d_{t}f|\big\|_{\ell^2(\ZZ^d)}
&\lesssim  \|f\|_{\ell^2(\ZZ^d)}+\sum_{t\in \mathbb D_{C_3,\infty}}\big\|\mathcal F^{-1}\big(E_{t,n_0}\,\hat{f}\big)\big\|_{\ell^2(\ZZ^d)}\\
&\lesssim \bigg(1+\sum_{\la\ge C_3^2 d^3}A^d\frac{d^{3d/4}}{\la^{d/4-1}}\bigg)\|f\|_{\ell^2(\ZZ^d)}\\
&\lesssim \bigg(1+\sum_{\la\ge C_3^2 d^3}\frac{d^9}{\la^2}\bigg(\frac{d^3 A^4}{\la}\bigg)^{d/4-3}\bigg)\|f\|_{\ell^2(\ZZ^d)},
\end{split}\end{equation}
where $A>1$ is a universal constant and $E_{t,n_0}$ is the error term
satisfying \eqref{eq:12}. Since $d\ge16$ taking in \eqref{eq:alltogether:thm:1'}
a constant $C_3>0$ such that $C_3^2 \ge 2A^4$ we obtain
\begin{align*}
\big\|\sup_{t\in \mathbb D_{C_3,\infty}}|\mathcal A^d_{t}f|\big\|_{\ell^2(\ZZ^d)}
\lesssim \Big(1+\sum_{\la\ge C_3^2 d^3}\la^{-2}\Big)\|f\|_{\ell^2(\ZZ^d)} \lesssim \|f\|_{\ell^2(\ZZ^d)}.
\end{align*}
This completes the proof of \eqref{eq:ls}.\qed

\begin{remark}\label{rem:100}
We note that it is possible to show a slightly stronger result than
\eqref{eq:ls}. Namely, there are universal constants $C, C_3 >0$ such
that
\begin{align}
\label{25.1}
\sup_{d\ge5}\mathcal C(2,\sqrt{\NN} \cap (C_3 d^{3/2}, \infty), S^{d-1})\le C.
\end{align}
To prove \eqref{25.1} we need a refinement of \eqref{eq:PerSm}, where
$\sup_{t\in \mathbb D}$ is replaced by $\sup_{t > 0}$. For this purpose we have to prove the following estimate
\begin{align} \label{D20.22}
| \langle \eta, \nabla (\mathcal F\mu^r) (\eta) \rangle |
\lesssim
1, \qquad r \ge 2, \quad \eta \in \RR^r,
\end{align}
where the implicit constant is independent of $r$ and $\eta \in \RR^r$.
Proceeding as in the proof of Lemma~\ref{lem:aux_4} we may deduce \eqref{D20.22}.
We omit the details.

\end{remark}

\section{Proof of Theorem \ref{thm:00}: intermediate-scale estimate \eqref{eq:is}}

\label{sec:me}
This section is devoted to the proof of inequality \eqref{eq:is} in Theorem \ref{thm:00}. The main
estimate for Fourier transforms in this regime is the following
proposition.
\begin{proposition}
\label{prop:2}
Let $100d\le \la \le d^3 .$  Then   for all $\xi\in\TT^d$ we have
\begin{align}
  \label{eq:23}
  |\mathfrak m_t(\xi)|
	\lesssim (\kappa(d, \la)\|\xi\|)^{-1}+(\kappa(d, \la)\|\xi+\vo/2\|)^{-1}+\kappa(d, \la)^{-4}. 
\end{align}
\end{proposition}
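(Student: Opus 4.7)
The plan is to reduce \eqref{eq:23} to Proposition \ref{prop:mtinfla} via a decrease-dimension trick (Lemma \ref{lem:9}), which is the discrete analogue of the Stein--Str\"omberg method of rotations. The idea is to view $\mathfrak{m}_t(\xi)$ as an average, over a random permutation $\tau \in \Sym(d)$ and over a decomposition $\la = l + (\la - l)$, of spherical multipliers on $S_{\sqrt{l}}^{r-1} \cap \ZZ^r$ with ambient dimension $r \ll d$. Concretely, for every fixed $1 \le r < d$, using the disjoint union $S_{\sqrt{\la}}^{d-1} \cap \ZZ^d = \bigcup_{l=0}^{\la} (S_{\sqrt{l}}^{r-1} \cap \ZZ^r) \times (S_{\sqrt{\la-l}}^{d-r-1} \cap \ZZ^{d-r})$ and then symmetrizing over $\tau$ using \ref{sym:2}, I would arrive at
\begin{equation*}
\mathfrak{m}_t(\xi) = \mathbb{E}_\tau \sum_{l=0}^{\la} \omega_l\, \mathfrak{m}_l^{(r)}(\xi_\tau')\, \mathfrak{m}_{\sqrt{\la - l}}^{(d-r)}(\xi_\tau''),
\end{equation*}
where $\xi_\tau' = (\xi_{\tau(1)}, \ldots, \xi_{\tau(r)})$, $\xi_\tau'' = (\xi_{\tau(r+1)}, \ldots, \xi_{\tau(d)})$, and $\omega_l = |S_{\sqrt{l}}^{r-1} \cap \ZZ^r|\cdot|S_{\sqrt{\la - l}}^{d-r-1} \cap \ZZ^{d-r}|/|S_t \cap \ZZ^d|$ are nonnegative weights summing to $1$. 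Bounding $|\mathfrak{m}_{\sqrt{\la-l}}^{(d-r)}| \le 1$ trivially, the proof reduces to controlling $\mathbb{E}_\tau \sum_l \omega_l |\mathfrak{m}_l^{(r)}(\xi_\tau')|$.

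Next, I would choose $r$ of order $\kappa(d,\la)$, so that in the intermediate range $100 d \le \la \le d^3$ one has simultaneously $r \ge 16$ and $r^3 \le l$ for the typical $l \simeq r\la/d$. The mass-concentration statement of Lemma \ref{lem:5} (deduced from its ball counterpart in \cite{BMSW2} through Lemma \ref{lem:lfs}), combined with the asymptotics \eqref{eq:1} and \eqref{wla0}, confirms that almost all of the $\omega_l$-weight lies in this typical range, and the contribution of atypical $l$ is absorbed into the final error. For $l$ in the typical range, Proposition \ref{prop:mtinfla} in dimension $r$ applies and yields
\begin{equation*}
|\mathfrak{m}_l^{(r)}(\xi_\tau')| \lesssim e^{-2\pi \kappa(r, l) \|\xi_\tau'\|} + e^{-2\pi \kappa(r, l)\|\xi_\tau' + \vo/2\|} + l^{-2} + e^{-c r},
\end{equation*}
where the all-ones shift $\vo/2$ is understood in $\RR^r$. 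The point is that $\kappa(r, l) = \sqrt{l/r} \simeq \sqrt{\la/d} = \kappa(d,\la)$ on the typical range. Summing the tail terms $l^{-2} + e^{-cr}$ over $l$ and using the constraints $\la \le d^3$ and $r \simeq \kappa(d, \la) \le d$ contributes at most $O(\kappa(d, \la)^{-4})$, matching the last term of \eqref{eq:23}.

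It remains to average the two exponentials over $\tau$. This is exactly the purpose of Lemma \ref{lem:7} (the convexity lemma adapted from \cite{BMSW2}): it transforms an exponential decay in a random slice $\|\xi_\tau'\|$ of $\|\xi\|$ into a polynomial decay in $\|\xi\|$ itself, schematically $\mathbb{E}_\tau e^{-c \kappa(d,\la) \|\xi_\tau'\|} \lesssim (\kappa(d, \la) \|\xi\|)^{-1}$. The estimate with $\xi + \vo/2$ in place of $\xi$ is obtained in the same way after pulling out the symmetry \eqref{eq:mtsym} at the beginning of the argument. Combining the three contributions produces \eqref{eq:23}.

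The main obstacle will be the averaging step: running Lemma \ref{lem:7} uniformly in $d$ without losing polynomial factors requires a delicate interplay between the hypergeometric concentration of $\|\xi_\tau'\|^2$ around $(r/d)\|\xi\|^2$, the elementary bound $e^{-x} \lesssim 1/x$, and the specific choice of $r$. A secondary technical point will be the bookkeeping of the weights $\omega_l$: one must invoke \eqref{eq:1} in two different dimensions simultaneously ($r$ and $d-r$), and separately handle the boundary case in which $r \simeq \kappa(d,\la)$ is below the threshold $16$ required by Theorem \ref{thm:asymptotic}, for example by a direct computation or a preliminary reduction to $\kappa(d,\la) \ge 16$.
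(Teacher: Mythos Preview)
Your dimension-reduction strategy via Lemma~\ref{lem:9} and Proposition~\ref{prop:mtinfla} is indeed the backbone of the paper's argument, but the averaging step you flag as ``the main obstacle'' is a genuine gap, not merely a delicate computation. The schematic claim $\mathbb{E}_\tau\, e^{-c\,\kappa(d,\la)\,\|\xi_\tau'\|}\lesssim (\kappa(d,\la)\|\xi\|)^{-1}$ is false. Take $\xi=(1/2,0,\ldots,0)$; then $\|\xi_\tau'\|=0$ unless $1\in\tau(\NN_r)$, an event of probability $r/d$, so the expectation is at least $1-r/d$. When $\la$ is near $d^3$ one has $\kappa(d,\la)\simeq d$, and with $r\simeq c\,\kappa(d,\la)$ the expectation is $\gtrsim 1$ while the target is $\simeq d^{-1}$; the bound fails as $d\to\infty$. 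Lemma~\ref{lem:7} cannot rescue this: its hypothesis demands $|I|\ge\delta_1 d$, whereas your random slice has only $r\ll d$ indices, and its output is an exponential in an \emph{additive} sum $\sum_j u_j$, not in a Euclidean norm.

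The paper does not average over $\tau$ when invoking Proposition~\ref{prop:mtinfla}. It fixes $r\simeq\kappa(d,\la)^{2/7}$ and \emph{deterministically} selects the $r$ coordinates so that $\NN_r$ contains (after relabelling) the $r_0\simeq r/2$ largest among $\{\|\xi_j\|\}$ and simultaneously the $r_0$ largest among $\{\|\xi_j+1/2\|\}$. If the resulting slice $\eta$ already carries at least a quarter of $\|\xi\|^2$, Lemma~\ref{lem:13} yields \eqref{eq:23} directly. If not, then every $\|\xi_j\|$ with $j>r$ is uniformly $\lesssim\kappa(d,\la)^{-1}$, and a second, independent argument takes over: one bounds $|\mathfrak m_t(\xi)|^2$ by $|S_t\cap\ZZ^d|^{-1}\sum_{x}\exp\big(-\sum_{j>r}\sin^2(2\pi x_j\xi_j)\big)$ via \eqref{eq:43} and Cauchy--Schwarz, and only \emph{then} applies Lemma~\ref{lem:7}, with the symmetrization acting on the sphere variable $x$ through \ref{sym:2} and with $I=I_x=\{i:|x_i|\simeq\kappa(d,\la)\}$, a set which by Lemma~\ref{lem:5} has size $\ge\delta_1 d$ for typical $x$. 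This second branch is exactly what handles the concentrated-$\xi$ obstruction and is missing from your plan; the companion estimate with $\xi+\vo/2$ then follows from~\eqref{eq:mtsym}.
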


Assuming for a moment Proposition \ref{prop:2} we may proceed as
follows. We let \begin{align}
\label{eq:107}
V_{\xi}:=\{i\in\NN_d\colon \cos(2\pi\xi_i)<0\}=\{i\in\NN_d\colon 1/4<\| \xi_i \| \le1/2\} \quad \text{for} \quad \xi\in\TT^d.
\end{align}
We will study the maximal functions associated with the following multipliers
\begin{align}
\label{eq:96}
p^1_{\lambda}(\xi)&:=  e^{-\kappa(d,\la)^2\sum_{i=1}^d\sin^2(\pi\xi_i)} && \text{ if
} |V_{\xi}|\le d/2,\\
\label{eq:97}p^2_{\lambda}(\xi)&:=(-1)^{\la}
e^{-\kappa(d,\la)^2\sum_{i=1}^d\cos^2(\pi\xi_i)} && \text{ if
} |V_{\xi}|>d/2.
\end{align}
The sets $V_{\xi}$ and the multipliers $p^1_{\lambda}$ and  $p^2_{\lambda}$ will be also used in the small scales case.   

Propositions \ref{prop:0} and \ref{prop:2} imply the following estimates.

\begin{proposition}
\label{prop:4p}
Let $d,\la\in\NN$ be such that $100d\le \la\le d^3$. Then for every
$\xi\in\TT^d$ we have the following
bounds: \begin{enumerate}[label*={\arabic*}.]
\item if $|V_{\xi}|\le d/2$, then
\begin{align}
\label{eq:94p}
|\mathfrak m_t(\xi)-p^1_{\lambda}(\xi)|\lesssim \min\big\{(\kappa(d, \la)\|\xi\|)^{-1},\kappa(d, \la)\|\xi\|\big\}+\kappa(d,\la)^{-1};
\end{align}
\item if $|V_{\xi}|>d/2$, then
\begin{align}
\label{eq:95p}
|\mathfrak m_t(\xi)-p^2_{\lambda}(\xi)|\lesssim \min\big\{(\kappa(d, \la)\|\xi+\vo/2\|)^{-1},\kappa(d, \la)\|\xi+\vo/2\|\big\}+\kappa(d,\la)^{-1}.
\end{align}
\end{enumerate}
\end{proposition}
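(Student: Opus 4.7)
The plan is to prove (1) directly and then deduce (2) from (1) by the translation-by-$\vo/2$ symmetry \eqref{eq:mtsym}. Observe first the identity $p^2_\lambda(\xi) = (-1)^\lambda p^1_\lambda(\xi + \vo/2)$, which is immediate from $\cos^2(\pi \xi_i) = \sin^2(\pi(\xi_i + 1/2))$. Moreover, the assumption $|V_\xi| > d/2$ is essentially equivalent to $|V_{\xi + \vo/2}| < d/2$, because $V_\xi^c \subseteq \{i : \|\xi_i + 1/2\| \ge 1/4\}$. Hence, once (1) is established, applying it at the point $\xi + \vo/2$ together with \eqref{eq:mtsym} gives $|\mathfrak m_t(\xi) - p^2_\lambda(\xi)| = |\mathfrak m_t(\xi+\vo/2) - p^1_\lambda(\xi+\vo/2)|$, which is \eqref{eq:95p}.

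For (1), I would split into two regimes according to the size of $\kappa(d,\la)\|\xi\|$. In the small regime $\kappa\|\xi\| \le 1$, Proposition~\ref{prop:0} gives $|\mathfrak m_t(\xi) - 1| \lesssim (\kappa\|\xi\|)^2$. A direct Taylor expansion of $p^1_\lambda$, combined with the upper bound $|\sin(\pi \xi_i)| \le \pi\|\xi_i\|$ from \eqref{eq:103} and the elementary inequality $|e^{-x} - 1| \le x$ for $x \ge 0$, similarly yields $|p^1_\lambda(\xi) - 1| \lesssim (\kappa\|\xi\|)^2$. Since $(\kappa\|\xi\|)^2 \le \kappa\|\xi\|$ in this regime, the triangle inequality produces \eqref{eq:94p}.

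In the complementary large regime $\kappa\|\xi\| > 1$ I would invoke Proposition~\ref{prop:2}. Using the lower bound $|\sin(\pi\xi_i)| \ge 2\|\xi_i\|$ from \eqref{eq:103}, one obtains $p^1_\lambda(\xi) \le e^{-4(\kappa\|\xi\|)^2} \lesssim (\kappa\|\xi\|)^{-1}$. The key bookkeeping step, where the hypothesis $|V_\xi| \le d/2$ enters crucially, is to absorb the two auxiliary terms of \eqref{eq:23} into $\kappa^{-1}$: since at least $d/2$ indices satisfy $\|\xi_i\| \le 1/4$ and therefore $\|\xi_i + 1/2\| \ge 1/4$, we obtain $\|\xi + \vo/2\|^2 \ge d/32$, so $(\kappa \|\xi + \vo/2\|)^{-1} \lesssim \la^{-1/2} \le \kappa^{-1}$; the assumption $\la \ge 100d$ also forces $\kappa \ge 10$ and hence $\kappa^{-4} \le \kappa^{-1}$. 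Thus \eqref{eq:23} collapses to $|\mathfrak m_t(\xi)| \lesssim (\kappa\|\xi\|)^{-1} + \kappa^{-1}$, which combined with the Gaussian bound for $p^1_\lambda$ gives \eqref{eq:94p}. The main difficulty is purely organisational, namely verifying the boundary cases (the points where $\|\xi_i\| = 1/4$ for some $i$, and the transition point $\kappa\|\xi\| \approx 1$) and keeping track of the absorptions $\kappa^{-4}, \la^{-1/2} \lesssim \kappa^{-1}$ within the narrow range $100d \le \la \le d^3$; no ingredient beyond Propositions~\ref{prop:0} and~\ref{prop:2} should be required.
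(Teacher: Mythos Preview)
Your proposal is correct and follows essentially the same route as the paper: both arguments rest on Proposition~\ref{prop:0} for the bound $\kappa(d,\la)\|\xi\|$, on Proposition~\ref{prop:2} for the bound $(\kappa(d,\la)\|\xi\|)^{-1}$, and on the observation that $|V_\xi|\le d/2$ forces $\|\xi+\vo/2\|\gtrsim\sqrt{d}$, which collapses the extra terms of \eqref{eq:23} into $\kappa(d,\la)^{-1}$. The only organisational differences are that the paper proves the two bounds for $|\mathfrak m_t-p^1_\lambda|$ separately and then takes the minimum (rather than splitting explicitly into the regimes $\kappa\|\xi\|\le1$ and $\kappa\|\xi\|>1$), and that the paper treats case~(2) directly by the symmetric argument (using \eqref{eq:22'} and $\|\xi\|\gtrsim\sqrt{d}$) rather than reducing it to case~(1) via \eqref{eq:mtsym}; your reduction is a clean shortcut and works because $V_\xi$ and $V_{\xi+\vo/2}$ are disjoint.
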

\begin{proof}
Note that $\sum_{i=1}^d \sin^2(\pi \xi_i)\simeq \|\xi\|^2$ and
$\sum_{i=1}^d \cos^2(\pi \xi_i)\simeq \|\xi+\vo/2\|^2.$ We consider
two cases.
	
If $|V_{\xi}|\le d/2,$ then $\|\xi+\vo/2\|\gtrsim \sqrt{d} \ge 1$. Therefore
using Proposition \ref{prop:2} we obtain
\begin{align*}
|\mathfrak m_t(\xi)-p^1_{\lambda}(\xi)|
&\lesssim (\kappa(d, \la)\|\xi\|)^{-1}+(\kappa(d, \la)\|\xi+\vo/2\|)^{-1}+\kappa(d,\la)^{-4}
\lesssim (\kappa(d, \la)\|\xi\|)^{-1} + \kappa(d,\la)^{-1}.
\end{align*}
Moreover, combining
$|\mathfrak m_t(\xi)-p^1_{\lambda}(\xi)|\le |\mathfrak m_t(\xi)-1|+|1-p^1_{\lambda}(\xi)|$
with \eqref{eq:22} we see that
\begin{align*}
|\mathfrak m_t(\xi)-p^1_{\lambda}(\xi)|&\lesssim \kappa(d,\la)\|\xi\|.
\end{align*}
  
Consider now $|V_{\xi}|\ge d/2.$ In this case $\|\xi\|\gtrsim \sqrt{d} \ge 1$. Thus, using Proposition \ref{prop:2} we
have
\begin{align*}
|\mathfrak m_t(\xi)-p^2_{\lambda}(\xi)|
&\lesssim (\kappa(d, \la)\|\xi\|)^{-1}+(\kappa(d, \la)\|\xi+\vo/2\|)^{-1}+\kappa(d,\la)^{-4}
\lesssim (\kappa(d, \la)\|\xi+\vo/2\|)^{-1}+\kappa(d,\la)^{-1}.
\end{align*}
Next,
$|\mathfrak m_t(\xi)-p^2_{\lambda}(\xi)|\le|\mathfrak m_t(\xi)-(-1)^{\la}|+|(-1)^{\la}-p^2_{\lambda}(\xi)|$
together with \eqref{eq:22'}  gives
\begin{align*}|\mathfrak m_t(\xi)-p^2_{\lambda}(\xi)|&\lesssim \kappa(d,\la)\|\xi+\vo/2\|.
\end{align*}
Hence, \eqref{eq:94p} and \eqref{eq:95p} are proved as claimed.
\end{proof}

Having proved Proposition \ref{prop:4p} we can now establish estimate
\eqref{eq:is} in Theorem \ref{thm:00}.  Here we shall need the
symmetric diffusion semigroup given by \eqref{eq:119}.

\begin{proof}[Proof of inequality \eqref{eq:is} in Theorem \ref{thm:00}]
\label{p:pf_T_1}
Let $f\in\ell^2(\ZZ^d)$ and we write $f=f_1+f_2$, where
$\hat{f}_1(\xi):=\hat{f}(\xi)\ind{\{\eta\in\TT^d\colon |V_{\eta}|\le d/2\}}(\xi)$. Then
\begin{align*}
\big\|\sup_{t\in\mathbb D_{C_1,C_2}}|\mathcal F^{-1}(\mathfrak m_t\hat{f})|\big\|_{\ell^2(\ZZ^d)}
\le& \sum_{i=1}^2\big\|\sup_{t\in\mathbb D_{C_1,C_2}}|\mathcal F^{-1}(p_{t^2}^i\hat{f}_i)|\big\|_{\ell^2(\ZZ^d)}\\
&+\sum_{i=1}^2\Big\|\Big(\sum_{t\in\mathbb D_{C_1,C_2}}\big|\mathcal F^{-1}\big((\mathfrak m_t - p_{t^2}^i)\hat{f}_i\big)\big|^2\Big)^{1/2} \Big\|_{\ell^2(\ZZ^d)}.
\end{align*}
The usual square function argument permits therefore to reduce the problem to controlling the maximal functions associated with the
multipliers $p^1_{\lambda}$ and  $p^2_{\lambda}$. Indeed, taking
$C_1=10$ and $C_2=1,$ Plancherel's theorem and Proposition \ref{prop:4p} imply 
\[
\sum_{i=1}^2\Big\|\Big(\sum_{t\in\mathbb D_{C_1,C_2}}\big|\mathcal F^{-1}\big((\mathfrak m_t - p_{t^2}^i)\hat{f}_i\big)\big|^2\Big)^{1/2} \Big\|_{\ell^2(\ZZ^d)}
\lesssim  \|f_1\|_{\ell^2(\ZZ^d)}+\|f_2\|_{\ell^2(\ZZ^d)}\le 2\|f\|_{\ell^2(\ZZ^d)}.
\]

Thus we only have to bound the maximal functions corresponding to the
multipliers \eqref{eq:96} and \eqref{eq:97}. Indeed, since
$p^1_{t^2}(\xi)=\mathfrak p_{\kappa(d,\la)^2}(\xi)$, then by
\eqref{eq:47} we obtain
\begin{align}
\label{eq:99p}
\big\|\sup_{t>0}|\mathcal F^{-1}(p^1_{t^2}\hat{f}_1)|\big\|_{\ell^2(\ZZ^d)}\lesssim \|f\|_{\ell^2(\ZZ^d)}.
\end{align}
It is also  not difficult to see that
\begin{align}
\label{eq:100p}
\big\|\sup_{t>0}|\mathcal F^{-1}(p^2_{t^2}\hat{f}_2)|\big\|_{\ell^2(\ZZ^d)}\lesssim \|f\|_{\ell^2(\ZZ^d)}.
\end{align}
Indeed, since $p^2_{t^2} (\xi) = (-1)^{\lambda} \mathfrak p_{\kappa(d,\la)^2}(\xi - \vo/2)$, 
letting $\hat{F}_2(\xi):=\hat{f}_2(\xi+\vo/2)$ we obtain 
\begin{align*}
\sup_{t>0} \big| \mathcal F^{-1}(p^2_{t^2}\hat{f}_2) \big|
=
\sup_{t>0} \big| \mathcal F^{-1}(\mathfrak p_{t}\hat{F}_2) \big|.
\end{align*}
Therefore using \eqref{eq:47} and the fact that 
$\|F_2\|_{\ell^2(\ZZ^d)}=\|f_2\|_{\ell^2(\ZZ^d)}\le \|f\|_{\ell^2(\ZZ^d)}$ we get \eqref{eq:100p}. 
This
finishes the proof of inequality \eqref{eq:is} in Theorem \ref{thm:00}.
\end{proof}

\subsection{Some preparatory estimates}

What is left is to prove Proposition \ref{prop:2}. The key idea will be to
use the dimension reduction trick from \cite{BMSW2}.  The proof of
Proposition \ref{prop:2} will require several lemmas, which are
similar to those proved in \cite[Section 2]{BMSW2}. Their proofs will
mostly rely on Lemma
\ref{lem:lfs} and the methods developed in \cite[Section 2]{BMSW2}.

\begin{lemma}
\label{lem:5}
Let $d\ge 10.$ Given $\varepsilon_1, \varepsilon_2\in(0, 1]$ we define for every $\la\in\NN$ the set
\[
E=\big\{x\in S_t\cap\ZZ^d\colon |\{i\in\NN_d\colon |x_i|\ge
\varepsilon_2\kappa(d,\la)\}|\le\varepsilon_1 d\big\}.
\]
If $\varepsilon_1, \varepsilon_2\in(0, 1/10]$ and $d\ge \kappa(d,\la)\ge10$, then we have
\begin{align*}
|E| \lesssim  e^{-\frac{d}{11}}|S_t \cap\ZZ^d|.
\end{align*}
\end{lemma}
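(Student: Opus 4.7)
My plan is to reduce Lemma \ref{lem:5} to its ball analogue from \cite[Section 2]{BMSW2} by using the comparison \eqref{eq:lfs2} from Lemma \ref{lem:lfs}. The relevant ball statement (which I would cite rather than reprove) asserts that, under the same smallness assumptions on $\varepsilon_1,\varepsilon_2$ and the same lower bound $\kappa(d,\la)\ge10$, the analogous set
\[
E_B:=\big\{x\in B^2_t(d)\cap \ZZ^d\colon |\{i\in\NN_d\colon |x_i|\ge \varepsilon_2\kappa(d,\la)\}|\le\varepsilon_1 d\big\}
\]
satisfies $|E_B|\lesssim e^{-\alpha d}|B^2_t(d)\cap \ZZ^d|$ for some absolute constant $\alpha>1/11$. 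The key observation is that the condition defining $E$ constrains only the coordinates of $x$, not its Euclidean norm; together with the trivial containment $S_t^{d-1}\cap \ZZ^d\subseteq B^2_t(d)\cap \ZZ^d$ this yields $E\subseteq E_B$.

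From there I would conclude by combining the ball bound with the right-hand inequality in \eqref{eq:lfs2}:
\[
|E|\le |E_B|\lesssim e^{-\alpha d}|B^2_t(d)\cap \ZZ^d| \le (2t+1)^4\, e^{-\alpha d}\,|S_t^{d-1}\cap \ZZ^d|.
\]
Since the hypothesis $\kappa(d,\la)\le d$ forces $t=\kappa(d,\la)\sqrt{d}\le d^{3/2}$ and hence $(2t+1)^4\lesssim d^6$, a polynomial-versus-exponential comparison — writing $e^{-\alpha d}=e^{-d/11}e^{-(\alpha-1/11)d}$ and absorbing $d^6$ into $e^{-(\alpha-1/11)d}$ for all $d\ge 10$ — delivers the claimed bound $|E|\lesssim e^{-d/11}|S_t^{d-1}\cap \ZZ^d|$.

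The main obstacle is not in the reduction itself, which is clean, but in ensuring that the decay rate $\alpha$ available from the ball analogue in \cite{BMSW2} is strictly larger than $1/11$, so that the multiplicative loss $(2t+1)^4\lesssim d^6$ coming from \eqref{eq:lfs2} — which is an artifact of the lack of a dilation structure on $\ZZ^d$ — is absorbed by the exponential decay while still leaving an $e^{-d/11}$ of room. This is a bookkeeping exercise: once the exact constants in the ball statement from \cite{BMSW2} are tracked, no further probabilistic or combinatorial work needs to be redone on the sphere.
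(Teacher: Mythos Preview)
Your proposal is correct and follows essentially the same route as the paper: the paper defines the ball analogue $\tilde{E}$, invokes \cite[Lemma 2.4]{BMSW2} to get $|\tilde{E}|\lesssim e^{-d/10}|B_t^2(d)\cap\ZZ^d|$, uses $E\subseteq\tilde{E}$ together with \eqref{eq:lfs2}, and then absorbs $(2t+1)^4\lesssim d^6$ into the gap between $e^{-d/10}$ and $e^{-d/11}$ exactly as you describe. Your ``main obstacle'' is resolved in the paper simply by noting that the ball constant is $\alpha=1/10>1/11$.
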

\begin{proof}
Considering
\begin{align*}
\tilde{E}=\big\{x\in B_t^2(d)\cap\ZZ^d\colon |\{i\in\NN_d\colon |x_i|\ge
\varepsilon_2\kappa(d,\la)\}|\le\varepsilon_1 d\big\}
\end{align*}
and using \cite[Lemma 2.4]{BMSW2} (notice that this result is still valid if we allow $N \in \sqrt{\NN}$ there) we see that
\begin{equation}
\label{eq:BMSW2_1}
|\tilde{E}| \lesssim  e^{-\frac{d}{10}}|B_t^2(d)\cap\ZZ^d|.
\end{equation}
Clearly $E\subseteq \tilde{E}$, thus inequality \eqref{eq:BMSW2_1} and Lemma \ref{lem:lfs} show that
\begin{align*}
|E|\le |\tilde{E}|\lesssim e^{-\frac{d}{10}}|B_t^2(d)\cap\ZZ^d| \leq  e^{-\frac{d}{10}} (2t+1)^{4} |S_t\cap \ZZ^d|\lesssim e^{-\frac{d}{11}} |S_t\cap \ZZ^d|,
\end{align*}
where in the last inequality used that
$t=\sqrt{d} \kappa(d,\la)\le d^{3/2}.$ This completes the proof of the
lemma.
\end{proof}

Having proved Lemma \ref{lem:5} we can now perform the
dimension-reduction trick as in \cite[Section 2.3]{BMSW2}. First we
need to justify an analogue of \cite[Lemma 2.7]{BMSW2}. Here a
concentration inequality for the hypergeometric distribution (see
\cite[Lemma 2.5]{BMSW2}) will be important.

\begin{lemma}
\label{lem:8}
Let $d\ge 10$ and $\la \in \NN$. For $\varepsilon\in (0,1/50]$ and $r$
an integer between $1$ and $d$ we define
\begin{equation}
\label{eq:lem:8_1}
E:=\{x\in S_{t}\cap \ZZ^d\colon
\sum_{i=1}^rx_i^2<\varepsilon^3\kappa(d,\la)^2r\}
\end{equation}
If $d\ge \kappa(d,\la)\ge 10$ then we have
\begin{align*}
|E|\lesssim e^{-\frac{\varepsilon r}{11}}|S_{t}\cap \ZZ^d|.
\end{align*}	
\end{lemma}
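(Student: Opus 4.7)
The plan is to follow the dimension-reduction strategy developed for balls in \cite[Section~2]{BMSW2}, specifically mirroring the proof of \cite[Lemma~2.7]{BMSW2}. The key ingredients will be the permutation invariance \ref{sym:2} of $S_t\cap\ZZ^d$, Lemma~\ref{lem:5} (which plays the role of \cite[Lemma~2.4]{BMSW2} in our spherical setting), and the hypergeometric concentration inequality \cite[Lemma~2.5]{BMSW2}.

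First I would use permutation invariance to write
\begin{equation*}
|E|=\sum_{x\in S_t\cap\ZZ^d}\mathbb{P}\Big[\sum_{i=1}^r x_{\tau(i)}^2<\varepsilon^3\kappa(d,\la)^2 r\Big],
\end{equation*}
since $x\mapsto\tau\circ x$ is a bijection of $S_t\cap\ZZ^d$ for each $\tau\in\Sym(d)$. To convert this into a combinatorial event, set $F_x:=\{i\in\NN_d:|x_i|\ge\varepsilon\kappa(d,\la)\}$. If the sum inside the probability is smaller than $\varepsilon^3\kappa(d,\la)^2 r$, then at most $\varepsilon r$ of the first $r$ reordered coordinates $x_{\tau(1)},\ldots,x_{\tau(r)}$ can lie in $F_x$, since each such contribution is at least $\varepsilon^2\kappa(d,\la)^2$. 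Thus the probability above is bounded by $\mathbb{P}[|\tau(\NN_r)\cap F_x|<\varepsilon r]$.

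Next I would split on the size of $F_x$. Put $A:=\{x\in S_t\cap\ZZ^d:|F_x|\le d/10\}$; since $\varepsilon\le 1/50\le 1/10$, Lemma~\ref{lem:5} applied with $\varepsilon_1=1/10$ and $\varepsilon_2=\varepsilon$ gives $|A|\lesssim e^{-d/11}|S_t\cap\ZZ^d|\le e^{-\varepsilon r/11}|S_t\cap\ZZ^d|$ as $\varepsilon r\le d$. For $x\in A^c$ the random variable $Y:=|\tau(\NN_r)\cap F_x|$ is hypergeometric with mean $r|F_x|/d>r/10$, and the hypothesis $\varepsilon\le 1/50$ ensures $\varepsilon r\le \mathbb{E}[Y]/5$; the Serfling-type lower-tail bound \cite[Lemma~2.5]{BMSW2} then gives $\mathbb{P}[Y<\varepsilon r]\lesssim e^{-cr}$ for a universal $c>0$, which is at most $e^{-\varepsilon r/11}$. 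Summing the two contributions finishes the proof.

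The only genuinely quantitative point, and the main potential obstacle, is checking that the hypergeometric Chernoff--Serfling exponent dominates $\varepsilon/11$. This is built into the hypothesis $\varepsilon\le 1/50$, which forces $\varepsilon r$ to lie a fixed fraction below $\mathbb{E}[Y]$; everything else is bookkeeping already streamlined by Lemma~\ref{lem:5}, which effectively reduces the small-$|F_x|$ regime to the ball estimate of \cite{BMSW2}.
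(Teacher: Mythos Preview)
Your argument is correct and follows essentially the same route as the paper's proof: both use permutation invariance to average, reduce the small-sum event to a bound on $|\tau(\NN_r)\cap F_x|$, and then split on $|F_x|$, invoking Lemma~\ref{lem:5} for the sparse case and the hypergeometric bound \cite[Lemma~2.5]{BMSW2} for the dense case. The only cosmetic difference is that the paper takes the threshold $|F_x|\ge 5\varepsilon d$ (so the hypergeometric bound yields $e^{-\varepsilon r/2}$ directly), whereas you use the fixed threshold $|F_x|>d/10$ and then observe $e^{-cr}\le e^{-\varepsilon r/11}$ since $\varepsilon\le 1/50$; both choices work.
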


\begin{proof}
Let $\delta_1\in(0, 1/10]$ be such that $\delta_1 = 5\varepsilon$ and
define $I_x:=\{i\in\NN_d: |x_i|\ge\varepsilon\kappa(d,\la)\}.$ We have
$E\subseteq E_1\cup E_2,$ where
\begin{align*}
E_1&:=\{x\in S_{t}\cap \ZZ^d\colon
\sum_{i\in I_x\cap\NN_r}x_i^2<\varepsilon^3\kappa(d,\la)^2r\quad \textrm{and}\quad |I_{x}|\ge \delta_1d\},\\
E_2&:=\{x\in S_{t}\cap \ZZ^d\colon
|I_{x}|<\delta_1d\}.
	\end{align*} 
By Lemma \ref{lem:5} (with $\varepsilon_1=\delta_1$ and $\varepsilon_2=\varepsilon$) we have $|E_2|\lesssim e^{-\frac{d}{11}}|S_{t}\cap \ZZ^d|.$ 
	
To estimate $E_1$ we note that 
$I_{\tau^{-1}\circ x}=\tau(I_x)$ for any $\tau \in {\rm Sym}(d)$
and $x\in S_t\cap\ZZ^d$. Therefore we have
\begin{align*}
|E_1|
& = 
\sum_{x\in S_t\cap\ZZ^d} \frac{1}{d!} \sum_{\tau\in{\rm Sym}(d)} \ind{E_1}(\tau^{-1} \circ x)
\\
&=\sum_{x\in S_t\cap\ZZ^d}\mathbb P[\{\tau\in{\rm Sym}(d)\colon
\sum_{i\in \tau(I_x)\cap\NN_r}x_{\tau^{-1}(i)}^2<\varepsilon^3\kappa(d, \la)^2r\ \text{ and }\ |I_x|\ge\delta_1d\}]\\
&\le \sum_{x\in S_t\cap\ZZ^d}	\mathbb P[\{\tau\in{\rm Sym}(d)
\colon |\tau(I_x)\cap\NN_r|\le \varepsilon r\}] \ind{|I_x|\ge\delta_1 d}\\
&\le 2 e^{-\frac{\delta_1 r}{10}}|S_{t}\cap \ZZ^d|, 
\end{align*}
where the last bound is a consequence of \cite[Lemma 2.5]{BMSW2} (with
$I=I_x$, $J=\NN_r$, $\delta_2= \varepsilon = \frac{\delta_1}{5}$).
\end{proof}

Lemma \ref{lem:9} below will play an essential role in the proof of
Proposition \ref{prop:2}.
\begin{lemma}
\label{lem:9}
For $d\in\NN$, $\la=t^2\in \NN$ and $\varepsilon\in(0, 1/50],$ if
$10\le \kappa(d, \la)\le d$, then for every $4\le r\le d$ and $\xi\in\TT^d$
we have
\begin{align}
\label{eq:39}
|\mathfrak m_t(\xi)|\lesssim\sup_{\varepsilon^3\kappa(d, \la)^2r\le l \le \la}|\mathfrak
m_{\sqrt{l}}^{(r)}(\xi_1,\ldots, \xi_r)|+e^{-\frac{\varepsilon r}{11}},
\end{align}
where
\begin{align}
\label{eq:21}
\mathfrak m_{t}^{(r)}(\eta)=\frac{1}{|S_{t}^{r-1}\cap\ZZ^r|}\sum_{x\in
S_{t}^{r-1}\cap\ZZ^r}e^{2\pi i \eta\cdot x},\qquad \eta\in \TT^r, \quad r \ge 4,
\end{align}
is the lower dimensional multiplier with $r\in \NN$ and $t>0$.
\end{lemma}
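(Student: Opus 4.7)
The plan is to split each lattice point $x\in S_t^{d-1}\cap\ZZ^d$ according to the squared norm of its first $r$ coordinates and then bound the contribution of the remaining $d-r$ coordinates trivially. Writing $x=(x',x'')$ with $x'=(x_1,\ldots,x_r)$ and $x''=(x_{r+1},\ldots,x_d)$, the identity $|x'|^2+|x''|^2=\la$ gives, analogously to \eqref{eq:48}, the disjoint decomposition
\begin{align*}
S_t^{d-1}\cap\ZZ^d=\bigcup_{l=0}^{\la}\big(S_{\sqrt l}^{r-1}\cap\ZZ^r\big)\times\big(S_{\sqrt{\la-l}}^{d-r-1}\cap\ZZ^{d-r}\big),
\end{align*}
with the usual convention $S_0^{r-1}\cap\ZZ^r=\{0\}$.

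Setting $\xi'=(\xi_1,\ldots,\xi_r)$, $\xi''=(\xi_{r+1},\ldots,\xi_d)$, $M_l:=|S_{\sqrt l}^{r-1}\cap\ZZ^r|$, and $N_l:=|S_{\sqrt{\la-l}}^{d-r-1}\cap\ZZ^{d-r}|$, the exponential sum in \eqref{eq:116} factors over the two blocks, and the trivial bound $|\sum_{x''}e^{2\pi i\xi''\cdot x''}|\le N_l$ yields
\begin{align*}
|\mathfrak m_t(\xi)|\le\frac{1}{|S_t\cap\ZZ^d|}\sum_{l=0}^{\la}M_l N_l\,|\mathfrak m_{\sqrt l}^{(r)}(\xi')|,
\end{align*}
where the $l$-th summand is interpreted as $0$ whenever $M_l=0$ (i.e.\ when $S_{\sqrt l}^{r-1}\cap\ZZ^r$ is empty).

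Next, I will split this sum at $l_0:=\lceil\varepsilon^3\kappa(d,\la)^2 r\rceil$. For $l<l_0$ I will estimate $|\mathfrak m_{\sqrt l}^{(r)}(\xi')|\le 1$ and observe that
\begin{align*}
\sum_{l<l_0}M_l N_l=\big|\big\{x\in S_t\cap\ZZ^d:\ \textstyle\sum_{i=1}^r x_i^2<\varepsilon^3\kappa(d,\la)^2 r\big\}\big|,
\end{align*}
which is exactly the cardinality of the set $E$ appearing in Lemma \ref{lem:8}; hence this contribution is $\lesssim e^{-\varepsilon r/11}$. For $l\ge l_0$ I will factor out the supremum of $|\mathfrak m_{\sqrt l}^{(r)}(\xi')|$ and use the bound $\sum_{l\ge l_0}M_l N_l\le|S_t^{d-1}\cap\ZZ^d|$, which is immediate from the decomposition displayed above. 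Adding the two contributions produces \eqref{eq:39}.

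The main obstacle is essentially none: Lemma \ref{lem:8} supplies exactly the concentration input that lets the low-$l$ part be discarded, and everything else is bookkeeping around the product decomposition. The only points that warrant care are the boundary cases ($l=0$, where the $r$-dimensional sphere degenerates to $\{0\}$; and those $l$ with $M_l=0$ or $N_l=0$, which contribute zero and may be silently omitted), and checking that the threshold $l_0$ chosen here matches the quantity appearing in the statements of Lemma \ref{lem:8} and \eqref{eq:39}.
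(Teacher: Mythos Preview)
Your proof is correct and follows essentially the same approach as the paper: both use the product decomposition of $S_t^{d-1}\cap\ZZ^d$ into products of lower-dimensional spheres indexed by $l=\sum_{i=1}^r x_i^2$, identify the low-$l$ contribution with the exceptional set $E$ from Lemma~\ref{lem:8}, and bound the high-$l$ contribution by pulling out the supremum of $|\mathfrak m_{\sqrt{l}}^{(r)}|$. Your write-up is in fact somewhat more detailed than the paper's, which simply records the disjoint decomposition $S_t\cap\ZZ^d=E\cup\bigcup_{l\ge \varepsilon^3\kappa(d,\la)^2 r}(S_{\sqrt{l}}^{r-1}\cap\ZZ^r)\times(S_{\sqrt{\la-l}}^{d-r-1}\cap\ZZ^{d-r})$ and asserts that \eqref{eq:39} follows immediately.
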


\begin{proof}
The case $r = d$ is trivial so we assume that $r < d$.
We identify $\RR^d\equiv\RR^r\times\RR^{d-r}$ and
$\TT^d\equiv\TT^r\times\TT^{d-r}$ and we will write
$\RR^d\ni x=(x^1, x^2)\in \RR^r\times\RR^{d-r}$ and
$\TT^d\ni \xi=(\xi^1, \xi^2)\in \TT^r\times\TT^{d-r}$, respectively. Note that Lemma
\ref{lem:8} gives the disjoint decomposition
\begin{align}
\label{eq:103'}
\begin{split}
S_{t}\cap \ZZ^d&=E\cup\bigcup_{\substack{l\in \NN \\ \la \ge l \ge \varepsilon^3\kappa(d, \lambda)^2r}}\big(S_{\sqrt{l}}^{r-1}\cap\ZZ^r\big)
\times\big(S^{d-r-1}_{\sqrt{\la-l}}\times\ZZ^{d-r}\big),
\end{split}
\end{align}
with $E$ being defined in \eqref{eq:lem:8_1}. Invoking \eqref{eq:103'} we immediately obtain \eqref{eq:39}.
\end{proof}

The utility of the above lemma comes from the fact that, once $r$ is
appropriately chosen, Lemma \ref{lem:9} allows us to apply Proposition
\ref{prop:mtinfla} for the lower-dimensional multipliers
$\mathfrak m_{\sqrt{l}}^{(r)}(\xi_1,\ldots, \xi_r).$
\begin{lemma} \label{lem:13}
Let $C>0$ be a universal constant for which the conclusion of
Proposition \ref{prop:mtinfla} holds and take $\delta\in(0, 1/2)$ and
$\varepsilon\in(0, 1/50].$ Let $d,\la\in\NN$ be such that $\kappa(d,\lambda) \le d$ and take $r$ an integer
such that $1\le r\le d$ and
\begin{equation}
\label{eq:lem:13_0}
\max\{21, \varepsilon^{\frac{3\delta}{2}}\kappa(d, \la)^{\delta} C^{-\delta/2}/4\}
\le r\le
\max\{21,\varepsilon^{\frac{3\delta}{2}}\kappa(d,\la)^{\delta} C^{-\delta/2}\}.\end{equation}
Then, for every $\xi=(\xi_1,\ldots,\xi_d)\in\TT^d$ we have
\begin{equation}
\label{eq:lem:13_1}
|\mathfrak m_t(\xi)|\lesssim_{\delta,\varepsilon}\kappa(d, \la)^{-4}+e^{-2\pi \varepsilon^{3/2}\kappa(d,\la)\|\eta\|}+ e^{-2\pi \varepsilon^{3/2}\kappa(d,\la)\|\eta+\vo/2\|},
\end{equation}
where $\eta=(\xi_1,\ldots, \xi_r)$.
\end{lemma}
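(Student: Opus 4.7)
The plan is to combine the dimension-reduction trick of Lemma~\ref{lem:9} with the large-scale decay estimate of Proposition~\ref{prop:mtinfla} applied to the \emph{lower}-dimensional multipliers $\mathfrak m_{\sqrt{l}}^{(r)}$. First, fix a threshold $K = K(\delta,\varepsilon) \ge 10$ (large enough that the quantitative computations below go through) and dispose of the case $\kappa(d,\la) \le K$ via the trivial bound $|\mathfrak m_t(\xi)| \le 1 \lesssim_{\delta,\varepsilon} \kappa(d,\la)^{-4}$. From now on I assume $\kappa(d,\la) \ge K$.

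Under this assumption $r \ge 21 \ge 4$ and $r \le d$, so Lemma~\ref{lem:9} applies and yields
\[
|\mathfrak m_t(\xi)| \lesssim \sup_{\varepsilon^3\kappa(d,\la)^2 r \le l \le \la}|\mathfrak m_{\sqrt{l}}^{(r)}(\eta)| + e^{-\varepsilon r/11}, \qquad \eta = (\xi_1,\ldots,\xi_r).
\]
For $\kappa(d,\la) \ge K$ the lower bound $r \ge \varepsilon^{3\delta/2}\kappa(d,\la)^{\delta}C^{-\delta/2}/4$ in \eqref{eq:lem:13_0} is the active one, so $e^{-\varepsilon r/11}$ decays exponentially in $\kappa(d,\la)^{\delta}$ and is therefore $O_{\delta,\varepsilon}(\kappa(d,\la)^{-4})$, as required.

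Next I invoke Proposition~\ref{prop:mtinfla} with dimension $r$ and squared radius $l$. The hypothesis $r \ge 16$ is immediate from $r \ge 21$. The crucial hypothesis $l \ge Cr^3$ is precisely what the \emph{upper} bound on $r$ in \eqref{eq:lem:13_0} is designed to ensure: since $\delta < 1/2$ and $\kappa(d,\la) \ge K$,
\[
r \le \varepsilon^{3\delta/2}\kappa(d,\la)^{\delta}C^{-\delta/2} \le \varepsilon^{3/2}\kappa(d,\la)/\sqrt{C},
\]
hence $l \ge \varepsilon^3\kappa(d,\la)^2 r \ge Cr^3$. Proposition~\ref{prop:mtinfla} then delivers
\[
|\mathfrak m_{\sqrt{l}}^{(r)}(\eta)| \lesssim e^{-2\pi\kappa(r,l)\|\eta\|} + e^{-2\pi\kappa(r,l)\|\eta + \vo/2\|} + l^{-2} + e^{-cr}.
\]
The same lower bound on $l$ gives $\kappa(r,l) = (l/r)^{1/2} \ge \varepsilon^{3/2}\kappa(d,\la)$, which converts the two exponentials into the form appearing on the right-hand side of \eqref{eq:lem:13_1}; moreover $l^{-2} \lesssim \varepsilon^{-6}\kappa(d,\la)^{-4}$, and $e^{-cr}$ is absorbed into $\kappa(d,\la)^{-4}$ exactly as $e^{-\varepsilon r/11}$ was.

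The main technical obstacle, and the reason for the precise shape of \eqref{eq:lem:13_0}, is the calibration of $r$: it must be large enough (at least of order $\kappa(d,\la)^{\delta}$) for the two exponential errors $e^{-\varepsilon r/11}$ and $e^{-cr}$ to be dominated by $\kappa(d,\la)^{-4}$, yet small enough (at most of order $\kappa(d,\la)$) so that the hypothesis $l \ge Cr^3$ of Proposition~\ref{prop:mtinfla} holds uniformly on the whole range $l \in [\varepsilon^3\kappa(d,\la)^2 r, \la]$. Any exponent $\delta \in (0,1/2)$, together with the lower cutoff at $21$ in \eqref{eq:lem:13_0} that handles the moderate-$\kappa(d,\la)$ regime where $\varepsilon^{3\delta/2}\kappa(d,\la)^{\delta}C^{-\delta/2}$ is not yet above $21$, realizes this balance.
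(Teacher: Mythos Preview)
Your proof is correct and follows essentially the same route as the paper's: dispose of small $\kappa(d,\la)$ by triviality, apply Lemma~\ref{lem:9} to pass to the $r$-dimensional multiplier $\mathfrak m_{\sqrt{l}}^{(r)}(\eta)$, then invoke Proposition~\ref{prop:mtinfla} in dimension $r$ after checking $l\ge Cr^3$ via the upper bound on $r$ and using $\kappa(r,l)\ge\varepsilon^{3/2}\kappa(d,\la)$. The only cosmetic difference is that the paper splits the trivial regime into two sub-cases ($\kappa(d,\la)\le\varepsilon^{-3/2}$ and then $\varepsilon^{3\delta/2}\kappa(d,\la)^{\delta}C^{-\delta/2}/4\le 21$) rather than introducing a single threshold $K(\delta,\varepsilon)$.
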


\begin{proof}
If $\kappa(d, \la)\le \varepsilon^{-\frac{3}{2}}$, then there is
nothing to do, since the implied constant in question is allowed to
depend on $\delta$ and $\varepsilon$. We will assume that
$\kappa(d, \la)\ge \varepsilon^{-\frac{3}{2}}$, which ensures that
$\kappa(d, \la)\ge10$. For such $\kappa(d,\la)$ if
$\varepsilon^{\frac{3\delta}{2}}\kappa(d, \la)^{\delta} C^{-\delta/2}/4\le 21$,
then $\kappa(d,\la)\simeq_{\varepsilon,\delta} 1,$ and
\eqref{eq:lem:13_1} is again obvious. Therefore, without loss of
generality we assume that $\kappa(d,\la)\ge 10$ and that
\begin{equation}
\label{eq:lem:13_2}
21 \le 
\varepsilon^{\frac{3\delta}{2}}\kappa(d, \la)^{\delta} C^{-\delta/2}/4
\le r\le
\varepsilon^{\frac{3\delta}{2}}\kappa(d,\la)^{\delta} C^{-\delta/2}.
\end{equation}
In view of Lemma \ref{lem:9} and \eqref{eq:lem:13_2} we have
\begin{align}
\label{eq:35}
|\mathfrak m_t(\xi)|\lesssim_{\delta,\varepsilon} \sup_{\varepsilon^3\kappa(d, \la)^2r\le l \le \la}|\mathfrak
m_{\sqrt{l}}^{(r)}(\eta)|+\kappa(d, \la)^{-4},
\end{align}
where $\eta=(\xi_1,\ldots,\xi_r)$. By \eqref{eq:lem:13_2} we have $\kappa(d,\la)\ge \sqrt{C} r^{1/\delta} \varepsilon^{-3/2}$,
hence $\varepsilon^3 \kappa(d,\la)^2 r\ge C r^{1+2/\delta}\ge C r^3$.
Thus, for $l \ge\varepsilon^3\kappa(d, \la)^2r$ we are allowed to
apply Proposition \ref{prop:mtinfla} in dimension $r$ to each of the
multipliers $ m_{\sqrt{l}}^{(r)}(\eta).$ In view of
\eqref{eq:35} we thus have
\[
|\mathfrak m_t(\xi)|\lesssim_{\delta,\varepsilon} 
\sup_{\varepsilon^3\kappa(d, \la)^2r\le l \le \la} \big( e^{-2\pi \kappa(r,l)\|\eta\|}+ e^{-2\pi \kappa(r,l)\|\eta +\vo/2\|}+ l^{-2}+e^{-cr}+\kappa(d, \lambda)^{-4} \big).
\]
Recalling \eqref{eq:lem:13_2} and noting that 
for $l \ge \varepsilon^3\kappa(d, \la)^2r$ we have $\kappa(r, l) \ge \varepsilon^{3/2} \kappa(d, \la)$ and 
$l \gtrsim_{\varepsilon} \kappa(d, \la)^2$, we see that
the above inequality leads to \eqref{eq:lem:13_1} as desired.
\end{proof}

\subsection{All together: proof of Proposition \ref{prop:2}} We have
prepared all necessary tools to prove Proposition \ref{prop:2}. We
shall be working under the assumptions of Lemma \ref{lem:13} with
$\delta=2/7$ and $\varepsilon=1/50$. While proving \eqref{eq:23} we
may assume that
$\kappa(d,\la)\ge \max \{ 10, 84^{1/\delta}50^{3/2}\sqrt{C} \}$, where $C$ is a constant from Lemma~\ref{lem:13}, as in the
other case the inequality is obvious. Then the assumption
\eqref{eq:lem:13_0} on the integer $r$ becomes
\begin{equation}
\label{eq:prop:2:asumr}
21 \le 50^{-\frac{3\delta}{2}}\kappa(d, \la)^{\delta} C^{-\delta/2}/4\le r\le 50^{-\frac{3\delta}{2}}\kappa(d,\la)^{\delta} C^{-\delta/2} \le d;
\end{equation}
the last inequality is a consequence of the assumption $\la \le d^3$ which forces $\kappa(d,\la) \le d$.

We shall also require a variant of the convexity inequality from \cite[Lemma 2.6]{BMSW2}. The difference between Lemma \ref{lem:7} and \cite[Lemma 2.6]{BMSW2} lies in the fact that we do not require $u_1,\ldots,u_d$ to be monotone. 

\begin{lemma}[{cf. \cite[Lemma 2.6]{BMSW2}}]
\label{lem:7}
Assume that we have a sequence $(u_j: j\in\NN_d)$ with 
$0\le u_j\le(1-\delta_0)/2$ for some
$\delta_0\in(0, 1)$. Suppose that $I\subseteq\NN_d$ satisfies
$\delta_1d\le |I|\le d$ for some $\delta_1\in(0, 1]$.  Then for every
$J=(d_0, d]\cap\ZZ$ with $0\le d_0\le d$ we have
\begin{align*}
\mathbb E\Big[\exp\Big({-\sum_{j\in\tau(I)\cap J}u_j}\Big)\Big]
\le 3\exp\Big({-\frac{\delta_0\delta_1}{20}\sum_{j\in J}u_j}\Big).
\end{align*}
\end{lemma}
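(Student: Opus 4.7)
The plan is to realize the random sum as a weighted moment generating function of a negatively associated family of $\{0,1\}$-valued random variables, factor it through that negative association, and then bound each single-coordinate factor by elementary calculus. Set $p:=|I|/d$, so that $p\ge \delta_1$, and for each $j\in\NN_d$ put $X_j:=\ind{j\in \tau(I)}$; then the sum of interest becomes $\sum_{j\in\tau(I)\cap J}u_j=\sum_{j\in J}u_j X_j$. Since $\tau$ is uniform on $\mathrm{Sym}(d)$ and $I$ is fixed of cardinality $|I|$, the set $\tau(I)$ is uniformly distributed over the $|I|$-element subsets of $\NN_d$, so $(X_j)_{j=1}^d$ is the indicator vector of a uniformly random fixed-size subset of $\NN_d$ --- a classical example of a negatively associated family.

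Applying negative association to the coordinatewise decreasing functions $x\mapsto e^{-u_j x}$ (this is where $u_j\ge 0$ enters) gives
\[
\mathbb E\Big[\exp\Big(-\sum_{j\in J}u_j X_j\Big)\Big]
\le \prod_{j\in J}\mathbb E[e^{-u_j X_j}]
= \prod_{j\in J}\bigl(1-p(1-e^{-u_j})\bigr).
\]
The next step is a one-variable calculus bound: the hypothesis $0\le u_j\le (1-\delta_0)/2$ combined with $1-e^{-u}\ge u-u^2/2$ yields $1-e^{-u_j}\ge \tfrac{3+\delta_0}{4}\,u_j$, and hence
\[
1-p(1-e^{-u_j})\le 1-\tfrac{(3+\delta_0)\delta_1}{4}\,u_j\le \exp\!\Bigl(-\tfrac{(3+\delta_0)\delta_1}{4}\,u_j\Bigr).
\]
Taking the product over $j\in J$ and using the crude inequality $\tfrac{(3+\delta_0)\delta_1}{4}\ge \tfrac{\delta_0\delta_1}{20}$ yields the stated conclusion; in fact one gets the stronger bound with constant $1$ in front of the exponential, so the factor $3$ in the statement is slack.

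The main potential obstacle is the appeal to negative association for fixed-size random subsets. If one wishes to avoid invoking it as a black box, an equivalent route is Hoeffding's classical moment generating function comparison between sampling without replacement and sampling with replacement, applied to the convex function $x\mapsto e^{-x}$: on the with-replacement side the expectation factors into independent single-draw expectations, to which the same Taylor bound applies. Either route dispenses entirely with the monotonicity hypothesis on $(u_j)$ that was present in \cite[Lemma~2.6]{BMSW2}, which is precisely the point of the present variant needed for the proof of Proposition \ref{prop:2}.
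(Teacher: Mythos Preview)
Your argument is correct and takes a genuinely different route from the paper. The paper proceeds by a reduction: it sets $\tilde u_j=u_j$ for $j\in J$ and $\tilde u_j=0$ otherwise, observes that the expectation is unchanged under any fixed permutation of the $\tilde u_j$, passes to a non-increasing rearrangement $v$ of $\tilde u$, and then quotes \cite[Lemma~2.6]{BMSW2} with $J=\NN_d$. In other words, the paper recovers the monotonicity hypothesis rather than dispensing with it. Your approach instead exploits the negative association of the indicators of a uniformly random fixed-size subset of $\NN_d$, factors the moment generating function into one-coordinate pieces $1-p(1-e^{-u_j})$, and finishes with an elementary Taylor bound. This is self-contained (no appeal to \cite{BMSW2} is needed), and it yields a sharper conclusion: constant $1$ in front and exponent $\tfrac{(3+\delta_0)\delta_1}{4}$ in place of $\tfrac{\delta_0\delta_1}{20}$.

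One small caveat: your parenthetical ``Hoeffding'' alternative is not quite the same computation. Hoeffding's convex-order comparison between sampling without and with replacement bounds $\mathbb E[e^{-S}]$ by $\bigl(\tfrac1d\sum_j e^{-u_j}\bigr)^{|I|}$, which does not factor over $j\in J$ in the way your main argument does; to recover your product bound one really wants the comparison between the fixed-size random subset and independent Bernoulli$(p)$ inclusions, which is precisely the negative association statement you already invoked. This does not affect the validity of your main proof, which stands as written.
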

\begin{proof}
Denote $\tilde{u}=(\tilde{u}_1,\ldots,\tilde{u}_d)$ with $\tilde{u}_j=u_j$ for  $j\in J$ and $\tilde{u}_j=0$ for $j\in \NN_d\setminus J,$ Let $v=(v_1,\ldots,v_d)$ be a non-increasing rearangment of $\tilde{u}.$  Since $\tilde{u}=\tilde{\tau}\circ v$ for some $\tilde{\tau}\in {\rm Sym}(d)$ we see that
$$\mathbb E\Big[\exp\Big({-\sum_{j\in\tau(I)\cap J}u_j}\Big)\Big]=\mathbb E\Big[\exp\Big({-\sum_{j\in\tau(I)}\tilde{u}_j}\Big)\Big]=\mathbb E\Big[\exp\Big({-\sum_{j\in\tau(I)}v_j}\Big)\Big].$$
Clearly $0\le v_d\le\ldots\le v_1\le (1-\delta_0)/2,$ hence, applying  \cite[Lemma 2.6]{BMSW2} with $J=\NN_d$ we obtain
$$\mathbb E\Big[\exp\Big({-\sum_{j\in\tau(I)\cap J}u_j}\Big)\Big]\le  3\exp\Big({-\frac{\delta_0\delta_1}{20}\sum_{j\in \NN_d}v_j}\Big)=3\exp\Big({-\frac{\delta_0\delta_1}{20}\sum_{j\in J}u_j}\Big).$$
and the proof is completed.
\end{proof}

\begin{proof}[Proof of Proposition \ref{prop:2}]
	Letting
	\begin{align*}
		r_0:=\lfloor50^{-\frac{3\delta}{2}}\kappa(d, \la)^{\delta} C^{-\delta/2}/4\rfloor+1,
	\end{align*}
	we see that any $r\in[r_0,2r_0]$ still satisfies \eqref{eq:prop:2:asumr}. Fix $\xi \in \TT^d.$ Clearly, there exist $\tau, \theta \in {\rm Sym}(d)$ such that
	\begin{align*}
		\|\xi_{\tau(1)}\|\ge \|\xi_{\tau(2)}\|\ge \ldots\ge \|\xi_{\tau(d)}\|
		\quad{\rm and}\quad
		\|\xi_{\theta(1)}+1/2\|\ge \|\xi_{\theta(2)}+1/2\|\ge \ldots\ge \|\xi_{\theta(d)}+1/2\|.
	\end{align*} 	
	Let $$I:=\tau(\NN_{r_0})\cup \theta (\NN_{r_0}),\qquad r:=|I|$$ so that $r_0\le r\le 2r_0.$ Since both sides of \eqref{eq:23} are invariant under the permutation group ${\rm Sym}(d)$ without loss of generality we may assume that $I=\NN_{r}.$ Thus Lemma \ref{lem:13} gives
	\begin{equation}
		\label{eq:prop:2:lem:13_1}
		|\mathfrak m_t(\xi)|\lesssim_{\delta,\varepsilon}\kappa(d, \la)^{-4}+e^{-2\pi \varepsilon^{3/2}\kappa(d,\la)\|\eta\|}+ e^{-2\pi \varepsilon^{3/2}\kappa(d,\la)\|\eta+\vo/2\|},
	\end{equation}
	where $\eta=(\xi_1,\ldots,\xi_r).$ Note that $\NN_r$ contains at least $r_0$ largest numbers among the elements of both $\{\|\xi_j\|: j\in \NN_d\}$ and $\{\|\xi_j+1/2\|:j\in \NN_d\}$, which   means that
	\begin{equation}
		\label{eq:INr}
		\begin{gathered}
			\big|\big\{j\in \NN_r\colon \|\xi_j\|\ge \max_{i\in\NN_d\setminus\NN_r}\|\xi_i\|\big \}\big|\ge r_0,\\
			\big|\big\{j\in \NN_r\colon \|\xi_j+1/2\|\ge \max_{i\in\NN_d\setminus\NN_r}\|\xi_i+1/2\|\big \}\big|\ge r_0.
		\end{gathered}
	\end{equation}
	
	In the proof we shall need a  modification of $V_{\xi}$ from \eqref{eq:107} given by
	\begin{equation*}
		V_{\eta}:=\{i\in \NN_r\colon \cos(2\pi\eta_i)<0\}=\{i\in \NN_r\colon 1/4<\|\eta_i\|\le1/2\} \quad \text{for} \quad \eta\in\TT^{r}.
	\end{equation*} 
	We will consider two cases: either $|V_{\eta}|\le r/2$ or $|V_{\eta}|>r/2.$ 
	
	Firstly assume that $|V_{\eta}|\le r/2.$ Here our goal is to prove that
	\begin{equation}
		\label{eq:prop:2:V_1}
		|\mathfrak m_t(\xi)|\lesssim \kappa(d,\la)^{-4}+(\kappa(d,\la)\|\xi\|)^{-1},
	\end{equation}
	which clearly implies \eqref{eq:23}.  If $|V_{\eta}|\le r/2$ then at
	least $r/2$ numbers among the elements $\{\|\eta_j\|: j\in \NN_r\}$  are
	smaller than $1/4,$ so that $\|\eta+\vo/2\|\geq 2^{-5/2} r^{1/2}.$ In
	this case \eqref{eq:prop:2:lem:13_1} implies
	\begin{equation}
		\label{eq:13_1'}
		|\mathfrak m_t(\xi)|
		\lesssim \kappa(d, \la)^{-4}+\exp(-2\pi 50^{-3/2}\kappa(d,\la)\|\eta\|).
	\end{equation}
	Suppose for a moment that $\|\eta\| ^2\ge\frac{1}{4}\|\xi\|^2.$
	Then \eqref{eq:13_1'} implies \eqref{eq:prop:2:V_1}. Thus we can assume that
	\begin{align}
		\label{eq:54}
		\|\xi_1\|^2+\ldots+\|\xi_r\|^2\le\frac{1}{4}\|\xi\|^2.  
	\end{align}
	
	Let $\varepsilon_1=1/10$
	and assume first that
	\begin{align}
		\label{eq:55}
		\|\xi_j\|\le\frac{\varepsilon_1^{1/2}}{10\kappa(d,\la)}\quad\text{ for all } \quad r <
		j\le d.
	\end{align}
	Using \eqref{eq:43} and the Cauchy--Schwarz inequality we have
	\begin{align*}
		|\mathfrak m_t(\xi)|^2\le \frac{1}{|S_t\cap\ZZ^d|}\sum_{x\in S_t\cap\ZZ^d}\exp\Big(-\sum_{j=r+1}^d\sin^2(2\pi x_j \xi_j)\Big).
	\end{align*}
	For $x\in S_t\cap\ZZ^d$ we define
	\begin{align*}
		I_x&:=\{i\in\NN_d\colon \varepsilon \kappa(d,\la) \le |x_i|\le 2\varepsilon_1^{-1/2}\kappa(d,\la) \},\\
		E&:=\big\{x\in S_t\cap\ZZ^d\colon |I_x|\ge\varepsilon_1 d/2\big\}.
	\end{align*}
	Then by Lemma \ref{lem:5} (with $\varepsilon_2=\varepsilon$)
	we obtain $  |E^{\bf c}|\lesssim e^{-\frac{d}{11}}|S_t\cap\ZZ^d|.$
	Therefore,
	\begin{align}
		\label{eq:57}
		\begin{split}
			|\mathfrak m_t(\xi)|^2
			&\lesssim \frac{1}{|S_t\cap\ZZ^d|}\sum_{x\in
				E}\exp\Big(-\sum_{j\in I_x\cap J_r}\sin^2(2\pi x_j
			\xi_j)\Big) +e^{-\frac{d}{11}},
		\end{split}
	\end{align}
	where $J_r=\NN_d\setminus\NN_r$. From \eqref{eq:55} and the definition
	of $I_x$ we see that
	$\sin^2(2\pi x_j \xi_j)\ge 4 \|2 x_j \xi_j \|^2 \ge 16\varepsilon^2\kappa(d,\la)^2\|\xi_j\|^2$ for $j \in I_x$.
	Consequently, we obtain
	\begin{align}
		\label{eq:58}
		\begin{split}
			\frac{1}{|S_t\cap\ZZ^d|}&\sum_{x\in E}
			\exp\Big(-\sum_{j\in I_x\cap J_r}\sin^2(2\pi x_j\xi_j)\Big) \\
			&\le \frac{1}{|S_t\cap\ZZ^d|}\sum_{x\in E}
			\exp\Big(-16\varepsilon^2\kappa(d,\la)^2\sum_{j\in I_x\cap J_r}\|\xi_j\|^2\Big)
			\lesssim e^{-c\kappa(d,\la)^2\|\xi\|^2},
		\end{split}
	\end{align}
	where $c>0$ is a universal constant. In order to obtain the last
	inequality in \eqref{eq:58} we use the fact that
	$\gamma(E)=E$ and $I_{\gamma^{-1} \circ x} = \gamma(I_x)$ for every
	$\gamma\in{\rm Sym}(d)$ and $x \in S_t$, and we
	apply Lemma \ref{lem:7} with
	$\delta_1=\varepsilon_1/2$, $d_0=r$, $I=I_x$ and $\delta_0=3/5$, to
	conclude that
	\begin{align*}
		\mathbb E\bigg[\exp\Big(-16\varepsilon^2\kappa(d,\la)^2\sum_{j\in \gamma(I_x)\cap
			J_r}\|\xi_j\|^2\Big)\bigg]\lesssim \exp\Big(-c'\kappa(d,\la)^2\sum_{j=r+1}^d\|\xi_j\|^2\Big),
	\end{align*}
	for a universal constant $c'>0$ and for all $x\in E$.
	This proves \eqref{eq:58} since by \eqref{eq:54} we obtain
	\begin{align*}
		\exp\Big(-c'\kappa(d,\la)^2\sum_{j=r+1}^d\|\xi_j\|^2\Big)\le \exp\Big(-\frac{3c'\kappa(d,\la)^2}{4}\sum_{j=1}^d\|\xi_j\|^2\Big).
	\end{align*}
	Coming back to \eqref{eq:57} we have thus proved that
	$$|\mathfrak m_t(\xi)|\lesssim (\kappa(d,\la)\|\xi\|)^{-1}+e^{-d/22}\lesssim \kappa(d,\la)^{-4}+ (\kappa(d,\la)\|\xi\|)^{-1}.$$
	Therefore, \eqref{eq:prop:2:V_1} is true under the assumption \eqref{eq:55}.
	
	Assume now that \eqref{eq:55} does not hold, i.e.\ that for some
	$j\in \NN_d\setminus\NN_r$ we have
	$ \|\xi_j\|\ge\frac{\varepsilon_1^{1/2}}{10\kappa(d,\la)}.$ Then,
	using \eqref{eq:INr} we see that
	\begin{align*}
		\|\eta\|^2\ge\frac{\varepsilon_1r_0}{100\kappa(d,\la)^2}\ge \frac{\varepsilon_1r}{200\kappa(d,\la)^2} .
	\end{align*}
	Thus, invoking \eqref{eq:13_1'} and recalling that $\varepsilon_1=1/10$ we obtain 
	\begin{align*}
		|\mathfrak m_t(\xi)|&\lesssim\kappa(d,\la)^{-4}+\exp(-2\pi 50^{-3/2}\kappa(d,\la)\|\eta\|)\leq \kappa(d,\la)^{-4}+\exp(-\pi 5^{-3/2} 10^{-9/2} \sqrt{r})\lesssim \kappa(d,\la)^{-4},
	\end{align*}
	since $r\simeq\kappa(d,\la)^{\frac{2}{7}}$. This completes the proof of \eqref{eq:prop:2:V_1}.

	It remains to consider the case $|V_{\eta}|>r/2$. We prove that
	\begin{equation}
		\label{eq:prop:2:V_2}
		|\mathfrak m_t(\xi)|\lesssim \kappa(d,\la)^{-4}+(\kappa(d,\la)\|\xi+\vo/2\|)^{-1}.
	\end{equation}
	The above bound clearly implies \eqref{eq:23}. Actually
	\eqref{eq:prop:2:V_2} may be easily deduced from the previous case
	\eqref{eq:prop:2:V_1}. Indeed, by \eqref{eq:mtsym} we have
	$|\mathfrak m_t(\xi)|=|\mathfrak m_t(\xi+\vo/2)|.$ Now, for the vector
	$\tilde{\xi}:=\xi+\vo/2$ we have $\tilde{\eta}=\eta+\vo/2$ and
	$|V_{\tilde{\eta}}|\le r/2.$ Moreover, \eqref{eq:INr} is still
	satisfied with $\tilde{\xi}$ in place of $\xi.$ Therefore, we can
	repeat the argument used in the proof of \eqref{eq:prop:2:V_1} with
	$\tilde{\xi}$ replacing $\xi.$ This leads to \eqref{eq:prop:2:V_2} as desired.
	
	The proof of Proposition \ref{prop:2} is thus completed. 
\end{proof}

\section{Proof of Theorem \ref{thm:00}: small-scale estimate \eqref{eq:ss}}
\label{sec:sm}
This section is devoted to the proof of inequality \eqref{eq:ss} in Theorem \ref{thm:00}.  We shall
proceed in much the same way as in the corresponding case for the
discrete Euclidean balls from \cite{BMSW2}. Therefore, we only briefly
point out the main differences. The strategy of the proof of inequality \eqref{eq:ss} is similar
to the proof of inequality \eqref{eq:is}. The approximating multiplies
$p^1_{\lambda}(\xi)$ and $p^2_{\lambda}(\xi)$ (see \eqref{eq:96},
\eqref{eq:97}) depend on the size of the set $V_{\xi}$ defined in
\eqref{eq:107}.  Proposition \ref{prop:4}, which is the main results
of this section, is a variant of Proposition \ref{prop:4p} adjusted to
the small scales.

\begin{proposition}
\label{prop:4}
Let $d\ge 5$ and assume that $\kappa(d,\la)\le 1/5.$ Then for every
$\la\in\NN$ and $\xi\in\TT^d$ we have the following bounds with the
constant $c\in(0, 1)$ as in \eqref{eq:70}. Namely,
\begin{enumerate}[label*={\arabic*}.]
\item if $|V_{\xi}|\le d/2$, then
\begin{align}
\label{eq:94}
|\mathfrak m_t(\xi)-p^1_{\lambda}(\xi)|
\lesssim
\min\Big\{e^{-\frac{c\kappa(d,\la)^2}{400}\sum_{i=1}^d\sin^2(\pi\xi_i)}, \kappa(d,\la)^2\sum_{i=1}^d\sin^2(\pi\xi_i)\Big\},
\end{align}
\item if $|V_{\xi}|\ge d/2$, then
\begin{align}
\label{eq:95}
|\mathfrak m_t(\xi)-p^2_{\lambda}(\xi)|
\lesssim
\min\Big\{e^{-\frac{c\kappa(d,\la)^2}{400}\sum_{i=1}^d\cos^2(\pi\xi_i)}, \kappa(d,\la)^2\sum_{i=1}^d\cos^2(\pi\xi_i)\Big\}.
\end{align}
\end{enumerate}
\end{proposition}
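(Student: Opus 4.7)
The plan is to adapt the argument used in \cite[Proposition 3.2]{BMSW2} for the ball case, transporting results to spheres via Lemma \ref{lem:lfs} and appealing to the Hamming cube concentration of Lemma \ref{lem:15} together with the uniform Krawtchouk polynomial bound from Property \ref{item:5} of Theorem \ref{thm:100}. Since the ball case is written out in full in \cite{BMSW2}, my argument would follow that template and only highlight the adaptations. The easy linear branch of \eqref{eq:94} is handled first: Proposition \ref{prop:0}, combined with the estimate $\|\xi_i\|^2 \le \tfrac{1}{4}\sin^2(\pi\xi_i)$ coming from \eqref{eq:103}, gives $|\mathfrak m_t(\xi) - 1| \lesssim \kappa(d,\la)^2\sum_{i=1}^d\sin^2(\pi\xi_i)$, while the elementary inequality $|1-e^{-y}|\le y$ applied to $p^1_\la$ gives $|1 - p^1_\la(\xi)| \le \kappa(d,\la)^2\sum_{i=1}^d\sin^2(\pi\xi_i)$; the triangle inequality then produces the linear bound in \eqref{eq:94}. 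The corresponding linear branch of \eqref{eq:95} follows in the same way using \eqref{eq:22'} in place of \eqref{eq:22}.

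For the exponential branch of \eqref{eq:94}, since $|p^1_\la(\xi)| \le \exp\bigl(-\tfrac{c}{400}\kappa(d,\la)^2\sum_i\sin^2(\pi\xi_i)\bigr)$ whenever $c\in(0,1)$, the triangle inequality reduces matters to proving the pointwise bound
\begin{equation*}
|\mathfrak m_t(\xi)| \lesssim \exp\Bigl(-\tfrac{c}{400}\kappa(d,\la)^2\sum_{i=1}^d\sin^2(\pi\xi_i)\Bigr)
\end{equation*}
under the assumption $|V_\xi|\le d/2$. Here I would invoke Lemma \ref{lem:15} together with formula \eqref{eq:43} to express $\mathfrak m_t(\xi)$, up to a controlled error, as a normalised elementary symmetric polynomial in the values $\cos(2\pi\xi_1),\ldots,\cos(2\pi\xi_d)$ --- precisely the Krawtchouk-type expression of \eqref{eq:38}. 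The uniform Krawtchouk bound in Property \ref{item:5} of Theorem \ref{thm:100} then produces the required exponential shape, and a careful bookkeeping of constants as in \cite[Section 3]{BMSW2} delivers the coefficient $c/400$ in the exponent. The hypothesis $|V_\xi|\le d/2$ is essential at this step: it ensures that at least half of the values $\cos(2\pi\xi_j)$ are non-negative, which is what makes the Krawtchouk estimate applicable in the regime of interest.

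For part (2) I would invoke the symmetry \eqref{eq:mtsym} together with the identities $\sin^2(\pi(\xi_i+1/2)) = \cos^2(\pi\xi_i)$ and $p^1_\la(\xi+\vo/2) = (-1)^{-\la}p^2_\la(\xi)$. If $|V_\xi|\ge d/2$ then $|V_{\xi+\vo/2}|\le d/2$, so part (1) applies at $\xi+\vo/2$ and, after multiplying through by $(-1)^\la$, yields \eqref{eq:95}. The main obstacle I expect is the quantitative Krawtchouk estimate with the precise decay constant $c/400$: extracting the exact exponent requires combining the Krawtchouk bound from Theorem \ref{thm:100} with the Hamming cube concentration from Lemma \ref{lem:15} in the way laid out in \cite[Section 3]{BMSW2}, and this is the most technical step of the proof, where the combinatorial structure of the Hamming scheme plays its decisive role.
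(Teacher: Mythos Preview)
Your proposal is correct and follows essentially the same approach as the paper: the linear branch via Proposition \ref{prop:0}, the exponential branch via Lemma \ref{lem:15} and the Krawtchouk bound from Theorem \ref{thm:100}, and part (2) by the symmetry \eqref{eq:mtsym}. The only organizational difference is that the paper packages the Krawtchouk step into a separate result (Proposition \ref{prop:5}) giving the two-term bound $|\mathfrak m_t(\xi)|\lesssim e^{-\frac{c\kappa^2}{100}\sum\sin^2(\pi\xi_i)}+e^{-\frac{c\kappa^2}{100}\sum\cos^2(\pi\xi_i)}$ without a case assumption, and then uses $|V_\xi|\le d/2$ (resp.\ $\ge d/2$) only afterwards to absorb the unwanted term, whereas you split into cases first.
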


Assuming momentarily Proposition \ref{prop:4} we can now deduce
inequality \eqref{eq:ss} in Theorem \ref{thm:00}. The argument is essentially the same as in the
proof of inequality \eqref{eq:is}. The only
difference is the use of Proposition \ref{prop:4} in place of
Proposition \ref{prop:4p}, however, we give it for the sake of
completeness.

\begin{proof}[Proof of inequality \eqref{eq:ss} in Theorem \ref{thm:00}]
 Let $f\in\ell^2(\ZZ^d)$ and we write $f=f_1+f_2$, where
$\hat{f}_1(\xi):=\hat{f}(\xi)\ind{\{\eta\in\TT^d\colon |V_{\eta}|\le d/2\}}(\xi)$. Then
\begin{align*}
\big\|\sup_{t\in\mathbb D_{C_0}}|\mathcal F^{-1}(\mathfrak m_t\hat{f})|\big\|_{\ell^2(\ZZ^d)}
\le \sum_{i=1}^2\big\|\sup_{t\in\mathbb D_{C_0}}|\mathcal F^{-1}(p_{t^2}^i\hat{f}_i)|\big\|_{\ell^2(\ZZ^d)}
+\sum_{i=1}^2\Big\|\Big(\sum_{t\in\mathbb D_{C_0}}\big|\mathcal F^{-1}\big((\mathfrak m_t - p_{t^2}^i)\hat{f}_i\big)\big|^2\Big)^{1/2}\Big\|_{\ell^2(\ZZ^d)}.
\end{align*}
The usual square function argument permits  to reduce the problem to bounding the maximal functions associated with the
multipliers $p^1_{\lambda}$ and  $p^2_{\lambda}$. Taking $C_0=1/5,$ Plancherel's theorem and Proposition \ref{prop:4} we obtain 
\[
\sum_{i=1}^2\Big\|\Big(\sum_{t\in\mathbb D_{C_0}}\big|\mathcal F^{-1}\big((\mathfrak m_t - p_{t^2}^i)\hat{f}_i\big)\big|^2\Big)^{1/2}\Big\|_{\ell^2(\ZZ^d)}
\lesssim  \|f_1\|_{\ell^2(\ZZ^d)}+\|f_2\|_{\ell^2(\ZZ^d)}\le 2\|f\|_{\ell^2(\ZZ^d)}.
\]
It remains to bound the maximal functions corresponding to the
multipliers $p_{t^2}^1$ and $p_{t^2}^2$. This was already done in
\eqref{eq:99p} and \eqref{eq:100p}, and is a simple consequence of \eqref{eq:47}. Hence, the proof of inequality \eqref{eq:ss} in Theorem \ref{thm:00} is completed.
\end{proof}

The rest of Section \ref{sec:sm} is devoted to the proof of Proposition \ref{prop:4}.

\subsection{Some preparatory estimates}
We shall need a version of \cite[Lemma 3.2]{BMSW2} with discrete spheres in place of  discrete balls.

\begin{lemma}
\label{lem:15}
For every $d, \la\in\NN$, $d\ge 5,$ if $\kappa(d,\la)\le 1/5$ and  $\la\ge k\ge2^9\max\{1, \kappa(d,\la)^6 \la\}$, then
\begin{align}
\label{eq:83}
|\{x\in S_{t}\cap\ZZ^d\colon |\{i\in\NN_d\colon x_i=\pm1\}|\le \la-k\}|\le (2t+1)^4 2^{-k+1}|S_{t}\cap\ZZ^d|.
\end{align}
\end{lemma}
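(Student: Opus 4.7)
The plan is to deduce \eqref{eq:83} from its counterpart for the discrete Euclidean ball, namely \cite[Lemma 3.2]{BMSW2}, combined with the comparison principle of Lemma \ref{lem:lfs}. The heuristic is that enlarging the sphere $S_t\cap \ZZ^d$ to the full ball $B_t^2(d)\cap \ZZ^d$ costs only the polynomial factor $(2t+1)^4$, which is precisely the factor appearing on the right-hand side of \eqref{eq:83}. Thus, no delicate spherical argument is needed; the ball version does all of the genuine work.

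Concretely, I would introduce the ball analogue
\begin{align*}
\tilde E:=\{x\in B_t^2(d)\cap\ZZ^d\colon |\{i\in\NN_d\colon x_i=\pm1\}|\le \la-k\}
\end{align*}
of the set whose cardinality we want to control. Since $S_t\cap \ZZ^d\subseteq B_t^2(d)\cap \ZZ^d$, we trivially have the inclusion
\begin{align*}
\{x\in S_{t}\cap\ZZ^d\colon |\{i\in\NN_d\colon x_i=\pm1\}|\le \la-k\}\subseteq \tilde E.
\end{align*}
The key observation is that, upon identifying the radius $N$ of the ball with $t=\sqrt{\la}$ (and hence $N^2$ with $\la$), the proportionality constant $\kappa$ from \cite{BMSW2} becomes $t/\sqrt{d}=\kappa(d,\la)$, and the hypotheses $\kappa(d,\la)\le 1/5$ and $\la\ge k\ge 2^9\max\{1,\kappa(d,\la)^6\la\}$ of Lemma \ref{lem:15} match exactly the hypotheses of \cite[Lemma 3.2]{BMSW2}. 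Therefore the ball lemma yields
\begin{align*}
|\tilde E|\le 2^{-k+1}|B_t^2(d)\cap \ZZ^d|.
\end{align*}

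Finally, I would apply the second estimate in Lemma \ref{lem:lfs}, namely $|B_t^2(d)\cap \ZZ^d|\le (2t+1)^4\,|S_t^{d-1}\cap \ZZ^d|$, and combine the three displays to conclude
\begin{align*}
|\{x\in S_{t}\cap\ZZ^d\colon |\{i\in\NN_d\colon x_i=\pm1\}|\le \la-k\}|\le |\tilde E|\le (2t+1)^4\,2^{-k+1}\,|S_t\cap\ZZ^d|,
\end{align*}
which is exactly \eqref{eq:83}. I do not anticipate any genuine obstacle here: the argument is a short two-step reduction, and the only point that requires care is the bookkeeping involved in translating the parameters $(N,\kappa(N,d))$ of the ball formulation into $(t,\kappa(d,\la))$. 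The fact that Lemma \ref{lem:lfs} supplies exactly the factor $(2t+1)^4$ that appears in the statement is what makes this reduction tight.
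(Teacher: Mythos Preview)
Your proof is correct and is essentially identical to the paper's own argument: the paper also passes from the sphere to the ball, invokes \cite[Lemma 3.2]{BMSW2} to get $|\tilde E|\le 2^{-k+1}|B_t^2(d)\cap\ZZ^d|$, and then applies Lemma \ref{lem:lfs} to replace $|B_t^2(d)\cap\ZZ^d|$ by $(2t+1)^4|S_t\cap\ZZ^d|$. The only cosmetic difference is that the paper writes the set as $A\cap B_t^2(d)\cap\ZZ^d$ with $A$ defined as a subset of $\ZZ^d$, whereas you define $\tilde E$ directly.
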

\begin{proof}	
We define $A:=\{ x\in \ZZ^d\colon|\{i\in\NN_d\colon x_i=\pm1\}|\le \la-k\}.$ Then, by \cite[Lemma 3.2]{BMSW2} (second inequality below) and Lemma \ref{lem:lfs} (last inequality below) we obtain
\begin{align*}
|A\cap S_{t}\cap\ZZ^d|\le |A\cap B_t^2(d)\cap\ZZ^d|\le 2^{-k+1}|B_t^2(d)\cap\ZZ^d|\le 2^{-k+1}(2t+1)^4|S_{t}\cap\ZZ^{d}|,
\end{align*}
and the proof of \eqref{eq:83} is completed.
\end{proof}

Lemma \ref{lem:15} will be essential in our next result. 

\begin{proposition}
\label{prop:5}
Let $d\ge 5$ and assume that $\kappa(d,\la)\le 1/5.$ Then, for every
$\la\in\NN$ and $\xi \in \TT^d$ we have the following estimate
\begin{align}
\label{eq:98}
|\mathfrak m_t(\xi)|\lesssim e^{-\frac{c\kappa(d,\la)^2}{100}\sum_{i=1}^d\sin^2(\pi\xi_i)}
+e^{-\frac{c\kappa(d,\la)^2}{100}\sum_{i=1}^d\cos^2(\pi\xi_i)},
\end{align}
where $c\in(0, 1)$ is the absolute constant from \eqref{eq:70}.
\end{proposition}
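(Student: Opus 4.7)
The plan is to use Lemma \ref{lem:15} to reduce the computation of $\mathfrak m_t(\xi)$ to a sum over $S_t\cap\{-1,0,1\}^d$, and then to recognize the resulting expression as (a normalized form of) the Krawtchouk polynomial from \eqref{eq:38}, so that the uniform bound in Property \ref{item:5} of Theorem \ref{thm:100} yields the exponential decay.

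First, I would fix $k:=\lfloor 2^9\max\{1,\kappa(d,\la)^6\la\}\rfloor+1$ and apply Lemma \ref{lem:15}. Since in the small-scale regime $t=\sqrt{\la}\le d^{1/2}/5$ is not too large, the factor $(2t+1)^4$ is harmless and we obtain
\begin{align*}
\mathfrak m_t(\xi)
=\frac{1}{|S_t\cap\ZZ^d|}\sum_{\substack{x\in S_t\cap\ZZ^d\\ x\in\{-1,0,1\}^d}}e^{2\pi i\xi\cdot x}
+O\bigl(e^{-c''\kappa(d,\la)^2\la}\bigr),
\end{align*}
with $c''>0$ absolute. Since the vectors $x\in S_t\cap\{-1,0,1\}^d$ are exactly those with $\pm 1$ entries on a set of size $\la$, summing over sign patterns gives
\begin{align*}
\sum_{\substack{x\in S_t\cap\ZZ^d\\ x\in\{-1,0,1\}^d}}e^{2\pi i\xi\cdot x}
=2^{\la}\sum_{\substack{A\subseteq\NN_d\\ |A|=\la}}\prod_{i\in A}\cos(2\pi\xi_i),
\end{align*}
which, after dividing by $|S_t\cap\{-1,0,1\}^d|=2^{\la}\binom{d}{\la}$, is precisely the value of the Krawtchouk polynomial \eqref{eq:38} at the point $(\cos(2\pi\xi_i))_{i=1}^d\in[-1,1]^d$.

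Next I would invoke Property \ref{item:5} of Theorem \ref{thm:100}, which gives a uniform bound for Krawtchouk polynomials in terms of the quantities $\sum_{i=1}^d\sin^2(\pi\xi_i)$ and $\sum_{i=1}^d\cos^2(\pi\xi_i)$. The two exponential terms on the right-hand side of \eqref{eq:98} reflect the two peaks of the Krawtchouk polynomial at $\xi=0$ and at $\xi=\vo/2$, corresponding to coordinates $\xi_i$ for which $\cos(2\pi\xi_i)$ is close to $+1$ or to $-1$ respectively, i.e.\ to the dichotomy recorded by the set $V_\xi$ from \eqref{eq:107}; the second term may alternatively be produced from the first via the symmetry \eqref{eq:mtsym}, by applying the same argument to $\xi+\vo/2$. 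Taking $c\in(0,1)$ as in \eqref{eq:70} and using the slack between $c/400$ (exponent that would arise naively) and $c/100$ (exponent claimed in \eqref{eq:98}) to absorb the ratio $|S_t\cap\{-1,0,1\}^d|/|S_t\cap\ZZ^d|$, which is controlled again via Lemma \ref{lem:15}, the claimed bound follows.

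The main obstacle is the bookkeeping in step three: one has to quantitatively translate the Krawtchouk estimate of Theorem \ref{thm:100} into the form \eqref{eq:98}, verify that both the $O$-term from Lemma \ref{lem:15} and the normalization factor $\frac{2^{\la}\binom{d}{\la}}{|S_t\cap\ZZ^d|}$ fit under the same exponential bound, and handle the case $|V_\xi|>d/2$ by reducing to $|V_{\xi+\vo/2}|\le d/2$ via \eqref{eq:mtsym}. As the authors note, this is all entirely parallel to \cite[Section 3]{BMSW2}, the only genuinely new input being Lemma \ref{lem:15}, which enters the argument through the comparison in Lemma \ref{lem:lfs} between lattice points on balls and on spheres.
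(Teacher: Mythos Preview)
Your overall strategy---Lemma \ref{lem:15} to concentrate on points with many $\pm 1$ coordinates, then the Krawtchouk bound from Theorem \ref{thm:100}---matches the paper's, but two concrete steps do not work as written. First, Lemma \ref{lem:15} with your $k=\lfloor 2^9\max\{1,\kappa(d,\la)^6\la\}\rfloor+1$ does \emph{not} reduce the sum to $S_t\cap\{-1,0,1\}^d$: writing $I_x:=\{i:x_i=\pm1\}$, the lemma only shows that $\{|I_x|\le\la-k\}$ is small, whereas $\{x\in S_t\cap\ZZ^d:x\notin\{-1,0,1\}^d\}=\{|I_x|\le\la-4\}$, and reaching that set via the lemma would force $k\le 4$, violating the hypothesis $k\ge 2^9$. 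With your $k$ the error $(2t+1)^4 2^{-k+1}$ is also merely of constant order (not $O(e^{-c''\kappa^2\la})$) and is not dominated by the right-hand side of \eqref{eq:98} when $\la$ is large. The paper instead disposes of $\la\le 2^{10}$ trivially and then takes $k=\la-\lfloor\la/2\rfloor$, obtaining the genuinely larger set $E=\{|I_x|>\la/2\}$ and the acceptable error $\lesssim e^{-3\la/8}$.

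Second, even granting a reduction to $\{-1,0,1\}^d$, the quantity $\binom{d}{\la}^{-1}\sum_{|A|=\la}\prod_{i\in A}\cos(2\pi\xi_i)$ is the normalized elementary symmetric polynomial in the reals $\cos(2\pi\xi_i)$, not a value of $\Bbbk_{\la}^{(d)}$ at any point: the Krawtchouk polynomial \eqref{eq:38} takes a single integer argument, and Property \ref{item:5} asserts only $|\Bbbk_k^{(n)}(x)|\le e^{-ckx/n}$ for integers $0\le x,k\le n/2$, with no mention of $\sum_i\sin^2(\pi\xi_i)$. The passage from $\prod_j\cos(2\pi x_j\xi_j)$ on the set $E$ to a genuine Krawtchouk value at an integer related to $|V_\xi|$, and from there to the exponentials in \eqref{eq:98}, is exactly the content of \cite[Section 3, (3.20)]{BMSW2} that the paper defers to; it uses permutation averaging and the symmetries \ref{item:1}, \ref{item:2} in an essential way rather than a direct substitution. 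Your closing paragraph correctly flags this as the main obstacle, but the specific reduction and identification you propose beforehand would have to be replaced by the paper's choices before that obstacle can even be attacked.
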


\begin{proof}
Note that when $\la \le 2^{10}$ then
$\kappa(d,\la)^2=\la/d\lesssim d^{-1}$ and \eqref{eq:98} is obvious.
Thus, in what follows we assume that $\la\ge2^{10}.$ For any $x\in\ZZ^d$ we
define the
sets
\begin{align*}
I_x:=\{i\in\NN_d\colon x_i=\pm1\}
\qquad \text{ and }\qquad
E:=\{x\in S_{t}\cap\ZZ^d\colon |I_x|>\la/2\}.
\end{align*}
Since $\la =\kappa(d,\la)^2 d,$ using Lemma \ref{lem:15}, with
$k= \lambda - \lfloor \la/2\rfloor$, we see that
\begin{align*}
|E^{\bf c}|&\le  (2t+1)^4 2^{-\la/2+2}|S_{t}\cap\ZZ^d|\lesssim 2^{-3\la/8}|S_{t}\cap\ZZ^d|   \leq e^{-\frac{\kappa(d,\la)^2d}{4}}|S_{t}\cap\ZZ^d|.
\end{align*}
In view of these estimates it now suffices to show that
\begin{align}
\label{eq:59}
\frac{1}{|S_{t}\cap\ZZ^d|}\Big|\sum_{x\in S_{t}\cap \ZZ^d\cap E}
\prod_{j=1}^d \cos(2\pi x_j \xi_j)\Big|\lesssim
e^{-\frac{c\kappa(d,\la)^2}{100}\sum_{i=1}^d\sin^2(\pi\xi_i)}
+e^{-\frac{c\kappa(d,\la)^2}{100}\sum_{i=1}^d\cos^2(\pi\xi_i)}.
\end{align}

As in \cite[Section 3]{BMSW2} the proof of \eqref{eq:59} will rely on
the properties of the Krawtchouk polynomials. For the convenience of
the reader we recall their definitions and basic properties.  For every $n\in\NN_0$ and integers
$x, k\in[0, n]$ we define the $k$-th Krawtchouk polynomial
\begin{align}
\label{eq:38}
\Bbbk_k^{(n)}(x):=\frac{1}{\binom{n}{k}}\sum_{j=0}^k(-1)^j\binom{x}{j}\binom{n-x}{k-j}.
\end{align}
We gather properties of Krawtchouk polynomials required to establish \eqref{eq:59}.
\begin{theorem}
\label{thm:100}
For every $n\in\NN_0$ and integers  $x, k\in[0, n]$ we have
\begin{enumerate}[label*={\arabic*}.]

\item \label{item:1}{\it Symmetry:} $\Bbbk_k^{(n)}(x)=\Bbbk_x^{(n)}(k)$.

\item \label{item:2}{{\it Reflection symmetry:}}
$\Bbbk_k^{(n)}(n-x)=(-1)^k\Bbbk_k^{(n)}(x)$.
\item \label{item:5}{\it A uniform bound:} there exists a constant $c\in(0, 1)$ such that
for all $n\in\NN_0$ the following inequality
\begin{align}
  \label{eq:70}
  \big|\Bbbk_k^{(n)}(x)\big|\le e^{-\frac{ckx}{n}}
\end{align}
holds for all integers $0\le x, k\le n/2$.

\end{enumerate}
\end{theorem}
The proof of Theorem \ref{thm:100} can be found in \cite{HKS}, see
also the references therein. It turns out that the left-hand side of
\eqref{eq:59} is essentially the Krawtchouk polynomial  (with appropriate
parameters $k, n$). In order to see this reduction one has to repeat
the proof of inequality \cite[Section 3, (3.20)]{BMSW2}
with $S_t$ in place of $B_N$. Once this is done we can easily deduce
\eqref{eq:59} with $c\in(0, 1)$ as in \eqref{eq:70}.
\end{proof}

\subsection{All together}
We are now ready to prove Proposition \ref{prop:4}.
\begin{proof}[Proof of Proposition \ref{prop:4}]
Firstly, we assume that $|V_{\xi}|\le d/2$, then \eqref{eq:98} implies that
\begin{align*}
	|\mathfrak m_t(\xi)|\lesssim e^{-\frac{c\kappa(d,\la)^2}{400}\sum_{i=1}^d\sin^2(\pi\xi_i)},
\end{align*}
since $\sum_{i=1}^d\cos^2(\pi\xi_i)\ge d/4\ge 1/4\sum_{i=1}^d\sin^2(\pi\xi_i)$. Thus
\begin{align}
\label{eq:110}
\Big|\mathfrak m_t(\xi)-e^{-\kappa(d,\la)^2\sum_{i=1}^d\sin^2(\pi\xi_i)}\Big|\lesssim e^{-\frac{c\kappa(d,\la)^2}{400}\sum_{i=1}^d\sin^2(\pi\xi_i)}.
\end{align}
On the other hand, using \eqref{eq:22} from Proposition \ref{prop:0} we obtain 
\begin{align}
\label{eq:111}
\Big|\mathfrak m_t(\xi)-e^{-\kappa(d,\la)^2\sum_{i=1}^d\sin^2(\pi\xi_i)}\Big|\lesssim \kappa(d,\la)^2\sum_{i=1}^d\sin^2(\pi\xi_i).
\end{align}
We now see that \eqref{eq:110} and \eqref{eq:111} imply \eqref{eq:94}.

Secondly, we assume that $|V_{\xi}|\ge d/2$, then \eqref{eq:98} implies that
\begin{align*}
	|\mathfrak m_t(\xi)|\lesssim e^{-\frac{c\kappa(d,\la)^2}{400}\sum_{i=1}^d\cos^2(\pi\xi_i)},
\end{align*}
since $\sum_{i=1}^d\sin^2(\pi\xi_i)\ge d/4\ge 1/4\sum_{i=1}^d\cos^2(\pi\xi_i)$. Thus
\begin{align}
\label{eq:112}
\Big|\mathfrak m_t(\xi)-(-1)^\la
e^{-\kappa(d,\la)^2\sum_{i=1}^d\cos^2(\pi\xi_i)}\Big| \lesssim e^{-\frac{c\kappa(d,\la)^2}{400}\sum_{i=1}^d\cos^2(\pi\xi_i)}.
\end{align}
On the other hand, \eqref{eq:22'} Proposition \ref{prop:0} implies
\begin{align}
\label{eq:113}
  \Big|\mathfrak m_t(\xi)-(-1)^\la
  e^{-\kappa(d,\la)^2\sum_{i=1}^d\cos^2(\pi\xi_i)}\Big|
  \lesssim \kappa(d,\la)^{2}\sum_{i=1}^d\cos^2(\pi\xi_i).
\end{align}
We now see that \eqref{eq:112} and \eqref{eq:113} imply \eqref{eq:95}. This completes the proof of the proposition.
\end{proof}


\begin{thebibliography}{99}
\bibitem{Ald1} J.M. Aldaz, \textit{The weak type $(1, 1)$ bounds for the
  maximal function associated to cubes grow to infinity with the
  dimension}, 
	Ann.  Math.  {\bf 173}, (2011), 1013--1023.
	
\bibitem{AM} 
T. Anderson, J. Madrid, 
\textit{New bounds for discrete lacunary spherical averages}.
Preprint 2020, arXiv:2001.11557.
  

\bibitem{Bou0} 
J. Bourgain,
\textit{Averages in the plane over convex curves and maximal operators}, 
J{.} Anal{.} Math{.} {\bf 47}, (1986),  69--85.

  
\bibitem{B1} 
J. Bourgain, 
\textit{On high dimensional maximal
  functions associated to convex bodies}, 
	Amer. J.  Math. {\bf 108}, (1986), 1467--1476.
        
\bibitem{B2} 
J. Bourgain, 
\textit{On $L^p$ bounds for maximal
  functions associated to convex bodies in $\RR^n$}, 
	Israel J. Math. {\bf 54}, (1986), 257--265.

\bibitem{B3} 
J. Bourgain, 
\textit{On the Hardy-Littlewood maximal
  function for the cube}, 
	Israel J. Math.  {\bf 203}, (2014), 275--293.

\bibitem{BMSW1} 
J. Bourgain, M. Mirek, E.M. Stein, B. Wr\'obel,
\textit{Dimension-free variational estimates on $L^p(\RR^d)$ for
  symmetric convex bodies}, 
	Geom. Funct. Anal.
  {\bf 28}, (2018),  58--99.

\bibitem{BMSW2} 
J. Bourgain, M. Mirek, E.M. Stein, B. Wr\'obel,
\textit{On discrete Hardy--Littlewood maximal functions 
over the balls in
$\ZZ^d$: dimension-free estimates}, 
Geometric Aspects of Functional Analysis -- Israel Seminar (GAFA) 2017-2019, Lecture Notes in Mathematics 2256.

  \bibitem{BMSW3} J. Bourgain, M. Mirek, E.M. Stein, B. Wr\'obel,
  \textit{Dimension-free estimates for discrete Hardy--Littlewood
  	averaging operators over the cubes in
  	$\mathbb Z^d$}, 
		Amer. J. Math. {\bf 141}, (2019), 857--905.


  \bibitem{BMSW4} 
	J. Bourgain, M. Mirek, E.M. Stein, B. Wr\'obel,
  \textit{On the Hardy--Littlewood maximal functions in high
  dimensions: Continuous and discrete perspective}, Preprint 2018.


\bibitem{Cal} 
C.P. Calder{\'o}n,
\textit{Lacunary spherical means},
Illinois J. Math. {\bf 23}, (1979), 476--484.
  
\bibitem{Car1} 
A. Carbery, 
\textit{An almost-orthogonality principle
  with applications to maximal functions associated to convex bodies},
Bull. Amer. Math. Soc.  {\bf 14}, (1986), 269--274.

\bibitem{CW} 
R.R. Coifman, G. Weiss,
\textit{Review: R.E. Edwards and G.I. Gaudry, Littlewood--Paley and multiplier theory},
Bull. Amer. Math. Soc. {\bf 84}, (1978), 242--250.

\bibitem{C1} 
B. Cook, 
\textit{Maximal function inequalities and a theorem of Birch}, 
Israel J{.} Math{.} {\bf 231}, (2019), 211--241.

\bibitem{C3} 
B. Cook,
\textit{A note on discrete spherical averages over sparse sequences}. 
Preprint 2018, arXiv:1808.03822.

\bibitem{CH} 
B. Cook, K. Hughes,
\textit{Bounds for lacunary maximal functions given by Birch-Magyar averages}. 
Preprint 2019, arXiv:1905.09189, to appear in Trans. Amer. Math. Soc.


\bibitem{DGM1} 
L. Deleaval, O. Gu\'edon, B. Maurey,
\textit{Dimension-free bounds for the Hardy--Littlewood maximal
  operator associated to convex sets},
	 Ann. Fac. Sci. Toulouse Math. {\bf 27}, (2018), 1--198.

\bibitem{Gra_C} 
L. Grafakos, 
\textit{Classical Fourier Analysis}, third edition, Graduate texts in Mathematics, 2014.

\bibitem{HKS}
A.W. Harrow, A. Kolla, L.J. Schulman.
\textit{Dimension--free $L_2$ maximal inequality for
spherical means in the hypercube},
Theory of Computing {\bf 10}, (2014), 55--75.


\bibitem{H1} 
K. Hughes,
\textit{The discrete spherical averages over a family of sparse sequences}, 
J{.} Anal{.} Math{.} {\bf 138}, (2019), 1--21.


\bibitem{I1} 
A.D. Ionescu,
\textit{An endpoint estimate for the discrete spherical maximal function}, 
Proc{.} Amer{.} Math{.} Soc{.} {\bf 132}, (2004), 1411--1417.


\bibitem{IK} 
H. Iwaniec, E. Kowalski,
\textit{Analytic Number Theory}.
Vol. 53, Amer. Math. Soc. Colloquium
Publications, Providence RI, (2004).

\bibitem{KLM} 
R. Kesler, M. Lacey, D. Mena,
\textit{Lacunary discrete spherical maximal functions},
New York J{.} Math{.} {\bf 25}, (2019), 541--557.

\bibitem{KLM2} 
R. Kesler, M. Lacey, D. Mena,
\textit{Sparse bounds for the discrete spherical maximal functions}, 
Pure Appl{.} Anal{.} {\bf 2}, (2020), 75--92.

\bibitem{KMPW} 
D. Kosz, M. Mirek, P. Plewa, B. Wr\'obel, 
\textit{Dimension--free estimates for the Hardy-Littlewood maximal function over the $q$-balls on $\ZZ^d$}. 
Preprint 2020, arxiv:2010.07379. 



\bibitem{Kov}
O. Kovrizhkin,
\textit{On the norms of discrete analogues of convolution operators.}
Proc. Amer. Math. Soc. {\bf 140} (2012), no. 4, 1349--1352.

\bibitem{Ma} 
\'A. Magyar,
\textit{$L^p$-bounds for spherical maximal operators on $\mathbb{Z}^n$}, 
Rev{.} Mat{.} Iberoamericana {\bf 13}, (1997), 307--317.

\bibitem{MSW}  
\'A. Magyar, E.M. Stein, S. Wainger,
\textit{Discrete Analogues in Harmonic Analysis: Spherical Averages}, 
Ann. of Math. {\bf 155}, (2002), 189--208.


\bibitem{Mul1} 
D. M\"uller, 
\textit{A geometric bound for maximal
  functions associated to convex bodies}, 
	Pacific J. Math. {\bf 142}, (1990), 297--312.

\bibitem{Nat} 
M.B. Nathanson,
\textit{Additive Number Theory. The Classical Bases}.
Springer--Verlag, 1996.



\bibitem{NIST} 
F.W.J. Olver,  D.W. Lozier, R.F. Boisvert, C.W. Clark (editors), 
\textit{NIST Handbook of Mathematical Functions}, U.S. Department of Commerce, National Institute of Standards and Technology, Washington, DC; Cambridge University Press, Cambridge, 2010. xvi+951 pp.

\bibitem{Ste0}
E{.}M{.} Stein, 
\textit{Maximal functions. I. Spherical means}, 
Proc{.} Nat{.} Acad{.} Sci{.} U{.}S{.}A{.} {\bf 73}, (1976), 2174--2175.


\bibitem{Ste1} 
E.M. Stein, 
\textit{Topics in harmonic analysis
  related to the Littlewood--Paley theory}, 
	Annals of Mathematics
Studies, Princeton University Press 1970, 1--157.


\bibitem{SteinMax} 
E.M. Stein, 
\textit{The development of square
  functions in the work of A. Zygmund}, 
	Bull.  Amer. Math. Soc. {\bf 7}, (1982), 359--376.

\bibitem{StStr} 
E.M. Stein, J.O. Str\"omberg, 
\textit{Behavior of maximal functions in $\mathbb{R}^n$ for large $n$}, 
Ark. Mat. {\bf 21}, (1983), 259--269.

\bibitem{Tao} T. Tao,
\textit{The Ionescu--Wainger multiplier theorem and the adeles}. 
Preprint 2020, arXiv:2008.05066.
 
\end{thebibliography}
\end{document}